\newtheorem{theorem}{Theorem}[section]
\newtheorem{lemma}[theorem]{Lemma}
\newtheorem{proposition}[theorem]{Proposition}
\newtheorem{corollary}[theorem]{Corollary}
\newtheorem{claim}[theorem]{Claim}
\theoremstyle{definition}
\newtheorem{definition}[theorem]{Definition}
\newtheorem{definitions}[theorem]{Definitions}
\newtheorem{example}[theorem]{Example}
\newtheorem{definitions and remarks}[theorem]{Definitions and Remarks}
\theoremstyle{remark}
\newtheorem{remark}[theorem]{Remark}
\newtheorem{remarks}[theorem]{Remarks}
\numberwithin{equation}{section}
\newcommand{\Supp}{\mathrm{Supp}\,}
\newcommand{\supp}{\mathrm{supp}}
\newcommand{\mon}{\mathrm{mon}}
\newcommand{\ord}{\mathrm{ord}}
\newcommand{\length}{\mathrm{length}\,}
\newcommand{\Ex}{\mathrm{Ex}}
\newcommand{\al}{{\alpha}}
\newcommand{\s}{{\sigma}}
\newcommand{\IN}{{\mathbb N}}
\newcommand{\IQ}{{\mathbb Q}}
\newcommand{\IA}{{\mathbb A}}
\newcommand{\IK}{{\mathbb K}}
\newcommand{\cN}{{\mathcal N}}
\newcommand{\cO}{{\mathcal O}}
\newcommand{\cV}{{\mathcal V}}
\newcommand{\fm}{{\mathfrak m}}
\newcommand{\fn}{{\mathfrak n}}
\newcommand{\tX}{{\widetilde X}}
\newcommand{\tD}{{\widetilde D}}
\newcommand{\tE}{{\widetilde E}}
\newcommand{\wA}{{\widehat A}}
\newcommand{\llbracket}{{[\![}}
\newcommand{\rrbracket}{{]\!]}}
\newcommand{\red}[1]{#1}
\begin{document}
\title[Resolution preserving semi-simple normal crossings]{Resolution of singularities of pairs preserving semi-simple normal crossings}

\author{Edward Bierstone}
\address{The Fields Institute, 222 College Street, Toronto, ON, Canada
M5T 3J1, and University of Toronto, Department of Mathematics, 40 St. George Street,
Toronto, Ontario, Canada M5S 2E4}
\email{bierston@fields.utoronto.ca}
\thanks{Research supported in part by NSERC grants OGP0009070 and MRS342058.}

\author{Franklin Vera Pacheco}
\address{University of Toronto, Department of Mathematics, 40 St. George Street,
Toronto, Ontario, Canada M5S 2E4}
\email{franklin.vp@gmail.com}

\subjclass{Primary 14E15, 32S45; Secondary 14C20, 32S10}

\keywords{resolution of singularities, simple normal crossings, semi simple normal crossings, desingularization invariant, Hilbert-Samuel function}

\begin{abstract}
Let $X$ denote a reduced algebraic variety and $D$ a Weil divisor on $X$. The pair
$(X,D)$ is said to be \emph{semi-simple normal crossings} (\emph{semi-snc}) at 
$a\in X$ if $X$ is simple normal crossings at $a$ (i.e., a simple normal 
crossings hypersurface, with 
respect to a local embedding in a smooth ambient variety),
and $D$ is induced by the restriction to $X$ of a hypersurface that is simple normal crossings with respect to $X$. We construct a composition of blowings-up 
$f:\tX\rightarrow X$ such that the transformed pair $(\tX,\tD)$ is everywhere 
semi-simple normal crossings, and $f$ is an isomorphism over the semi-simple normal crossings locus of 
$(X,D)$. The result answers a question of Koll\'ar.
\end{abstract}

\maketitle
\setcounter{tocdepth}{1}
\tableofcontents
%%%%%%%%%%%%%%%%%%%%%%%%%%%%%%%%%%%%%%%%%%%%%%%%%%%%%%%%%%%%%%%%%%%%%%%%%%%%%%%%%%%%%%%%%%%%%%%%%%%%%%%%%%%%%%%%%%%%%%%%%%%%%%%%%%%%%%%%%%%%%%%%%%%%%%%%%%%%%%%%%%%%%%%%%%%%%%%%%%%%%%%%%%%%%%%%%%%%%%%%%%%%%%%%%%%%%%%%%%%%%%%%%%%%%%%%%%%%%%%%%%%%%%%%%%%%%%%%%%%%%%%%%%%%%%%%

\section{Introduction}\label{sec:intro} The subject of this article is partial resolution of singularities
of a pair $(X,D)$, where $X$ is a reduced algebraic variety 
defined over a field of characteristic
zero and $D$ is a Weil $\IQ$-divisor on $X$. 

The purpose of partial resolution of singularities is to provide representatives of a birational
equivalence class that have mild singularities --- almost as good as smooth
--- which have to be admitted in natural situations, even if they can be
eliminated by normalization. For example, in order
to simultaneously resolve the singularities of curves in a parametrized family, one
needs to allow special fibers that have simple normal crossings singularities. Likewise, log
resolution of singularities of a divisor produces a divisor with simple normal crossings.
For these reasons, it is natural to consider simple normal crossings singularities
as acceptable from the start, and to seek a partial resolution which is an isomorphism
over the simple normal crossings locus. 

Our main theorem (Theorem \ref{thm:main})
is a solution of a problem of J\'anos Koll\'ar \cite[Problem 19]{Kolog} on resolution of singularities
of pairs $(X,D)$ except
for \emph{semi-simple normal crossings (semi-snc)} singularities.

\begin{definition}\label{def:ssnc}
Following Koll\'ar, we say that $(X,D)$ is \emph{semi-snc} at a point $a \in X$ if $X$ has a
neighborhood $U$ of $a$ that can be embedded in a smooth variety $Y$, where $Y$ has
regular local coordinates $(x_1,\ldots, x_p, y_1,\ldots,y_r)$ at $a=0$ in which $U$ is defined by a monomial equation
\begin{equation}\label{eq:snc}
x_1\dotsm x_p = 0
\end{equation}
and
\begin{equation}\label{eq:D}
D = \sum_{i=1}^r \al_i(y_i = 0)|_U, \quad \al_i \in \IQ.
\end{equation}
We say that $(X,D)$ is \emph{semi-snc} if it is semi-snc at every point of $X$.
\end{definition}

According to Definition \ref{def:ssnc}, the support, $\Supp D|_U$, of $D|_U$ as a subset of $Y$
is defined by a pair of monomial equations
\begin{equation}\label{eq:Dvar}
x_1\dotsm x_p = 0, \quad y_{i_1} \dotsm y_{i_q} = 0.
\end{equation}

Let $f: \tX \to X$ be a birational mapping. Denote by $\Ex(f)$ the exceptional set of $f$ (i.e. the set of points where $f$ is not a local isomorphism). Assuming that $\Ex(f)$ is a divisor we define $\tD := D' + \Ex(f)$, where $D'$ is
the birational transform of $D$ by $f^{-1}$. We call $(\tX,\tD)$ the \emph{(total) transform} of $(X,D)$ by $f$. 

\begin{theorem}[Main theorem]\label{thm:main}
Let $X$ denote a reduced algebraic variety over a field of characteristic zero, and $D$ a 
Weil $\IQ$-divisor on $X$. Let $U\subset X$ be the largest open subset such that $(U,D|_{U})$ is semi-snc. Then there is a morphism $f:\tX \to X$ given by a composite of blowings-up with smooth (admissible) centers, such that
\begin{enumerate}
\item $(\tX,\tD)$ is semi-snc;
\item\label{conditiontwo} $f$ is an isomorphism over $U$.
\end{enumerate}
\end{theorem}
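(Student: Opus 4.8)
The plan is to reduce the statement to an invariant-theoretic desingularization argument in the style of Bierstone–Milman, applied not to $X$ itself but to carefully chosen \emph{marked ideals} that encode how far $(X,D)$ is from being semi-snc. First I would work locally: embed a neighbourhood of $a\in X$ in a smooth ambient $Y$, so that $X$ becomes a hypersurface $\{z=0\}$ with $z\in\cO_{Y}$, and $\Supp D$ is cut out on $X$ by the restriction of another function. The semi-snc locus $U$ is exactly the set of points where, after a change of the ambient coordinates, $z$ becomes a monomial $x_1\cdots x_p$ and the components of $D$ become coordinate hyperplanes $y_i=0$ transverse to all of them; the complement $X\setminus U$ is a closed subvariety, and the goal is to blow up inside $X\setminus U$ (along smooth centers) until it disappears, while never touching $U$.

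The key steps, in order, are: (1) \textbf{Resolve $X$ first.} Apply the canonical resolution algorithm for the hypersurface $X\subset Y$, but in its \emph{snc-preserving} form — i.e. the algorithm is an isomorphism over the locus where $X$ is already simple normal crossings. Because $U$ is contained in the snc locus of $X$, this first stage is an isomorphism over $U$, and afterwards the total transform of $X$ is a simple normal crossings hypersurface everywhere. (2) \textbf{Track $D$ as a divisor with snc support but possibly non-snc position relative to $X$.} After step (1), $X$ is snc and the birational transform of $D$ plus the exceptional divisors form a divisor $\tD$; the failure of $(\tX,\tD)$ to be semi-snc is now purely the failure of the hypersurface $\tilde H\supset \Supp\tD$ to be snc \emph{with respect to} the components of $\tX$ — a combinatorial/normal-crossings condition on the arrangement of $H$ together with the irreducible components of $X$. (3) \textbf{Make $H\cup X$ snc by a second resolution.} Form the union divisor, and apply log-resolution (again in the form that is an isomorphism over the locus where the whole arrangement is already snc). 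Since $(U,D|_U)$ semi-snc means precisely that $H$ and the components of $X$ are in snc position there, this stage too is an isomorphism over $U$. (4) \textbf{Check admissibility and that exceptional divisors don't spoil things.} Verify that each center used is admissible — smooth and having normal crossings with the accumulated exceptional divisor — so that $\tD=D'+\Ex(f)$ again has snc support, and that the final pair is semi-snc at every point.

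The main obstacle — and the heart of the paper — is \textbf{step (2)–(3): controlling the interaction between the desingularization invariant of $X$ and that of the divisor $D$.} The naive two-stage approach fails because resolving $X$ can create components of $\tX$ (old exceptional divisors) that are \emph{not} in snc position with $D$, and then resolving that arrangement can in turn blow up points of the already-snc locus of a \emph{different} stratum — the two procedures are not obviously compatible, and a single monolithic invariant is needed. I expect to build a composite invariant $\inv_{(X,D)}$, lexicographically combining the Hilbert–Samuel-type invariant that measures the singularities of $X$ with a secondary invariant (an order/multiplicity along $D$, refined by how the components of $D$ meet the strata of the snc hypersurface $X$), and to prove that the centers defined by the maximal locus of this invariant are smooth, admissible, and \emph{disjoint from $U$}; the delicate point is the last one, i.e. showing the invariant is already minimal exactly on the semi-snc locus, so that no blowing-up occurs over $U$. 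This requires a careful local normal-form analysis near points of $X\setminus U$ where $X$ is snc but $D$ is in bad position, and an induction on the number of components of $X$ through the point (equivalently on the dimension of the smallest stratum), handling the passage between strata via the standard "boundary" machinery of the invariant.
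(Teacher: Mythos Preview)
Your overall architecture---make $X$ snc first, then repair the position of $D$---matches the paper's Steps~1--3, and your instinct that the crux is step~(2)--(3) is correct. But the proposed fix, a composite invariant built from the Hilbert--Samuel invariant of $X$ together with an order/multiplicity along $D$ ``refined by how the components of $D$ meet the strata of $X$'', has a genuine gap: \emph{no invariant of that kind suffices to detect all non-semi-snc points}. Concretely, take $X=(x_1x_2=0)\subset\IA^4_{(x_1,x_2,y,z)}$ and $D=(x_1=y=0)+(x_2=x_1+yz=0)$. The origin is not semi-snc (the two pieces of $D$ do not trace the same divisor on $X_1\cap X_2$), yet the Hilbert--Samuel function of $\Supp D$ at $0$ coincides with that of a genuine semi-snc point in the same stratum $\Sigma_{2,1}$, and so does every order-type and incidence datum you list. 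The paper resolves this by introducing an auxiliary quotient ideal $J=[I_{D_m}+I_{X^{m-1}}:I_{D^{m-1}}+I_{X_m}]$, whose triviality is a \emph{third} condition (beyond the inductive hypothesis on $(X^{m-1},D^{m-1})$ and the Hilbert--Samuel value $H_{p,q}$) in the characterization of semi-snc. Achieving $J=\cO_Y$ requires its own principalization-and-cleaning loop, and---a further subtlety your outline does not anticipate---the argument bifurcates: for $p\geq 3$ components of $X$ through $a$, one proves that the non-unit $g_2$ in $J=(x_p,\,x_1\cdots x_{p-1},\,g_2)$ automatically lies in the ideal of the stratum, so a single stratum blow-up works; for $p=2$ this fails (witness the example above, where $g_2=z$) and one must instead resolve $V(J)$ and then clean the resulting exceptional monomial. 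Relatedly, your ``hypersurface $\tilde H\supset\Supp\tD$'' is not well-posed: $\Supp D$ has codimension two in $Y$ and there is no canonical such $H$, so ``log-resolve $H\cup X$'' is not a functorial operation and a bad choice of $H$ can force blow-ups over $U$.

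Two further omissions: you never reduce from a $\IQ$-divisor to a reduced $D$. Semi-snc constrains the \emph{coefficients}, not just the support (e.g.\ $a_1(x_1=z=0)+a_2(x_2=z=0)$ on $(x_1x_2=0)$ is semi-snc only if $a_1=a_2$), and the paper handles this by a separate combinatorial Step~4 after $(X,D_{\mathrm{red}})$ is semi-snc. And you do not first remove the components of $D$ lying in $\Sing X$ (the paper's Step~2); without this, components of $D$ need not lie in a single component of $X$, the strata $\Sigma_{p,q}$ are ill-behaved, and the induction on the number of components of $X$ cannot start.
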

\begin{remarks}\label{rem:main}
(1) We say that a blowing-up (or its center) is \emph{admissible} if its center is smooth
and has simple normal crossings with respect to the exceptional divisor.
\smallskip

(2) In the special case that $X$ is smooth, we say that $D$ is a  \emph{simple normal crossings} or \emph{snc} divisor on $X$ if $(X,D)$ is semi-snc (i.e., Definition \ref{def:ssnc} is satisfied with $p=1$ at every point of X). This means that the irreducible components of $D$ are smooth and intersect transversely. Theorem \ref{thm:main}, in this case, will be called \emph{snc-strict log resolution} --- this means \emph{log resolution of singularities} of $D$ by a morphism that is an isomorphism over the snc locus (see Theorem \ref{thm:theoremB} below). The latter is proved in \cite[Thm.\,3.1]{BMmin}. Earlier versions can be found in \cite{Sz}, \cite[Sec.\, 12]{BMinv} and \cite{Kolog}. 

Theorem \ref{thm:main} in the special case that $D=0$ also follows from the 
earlier results; see Theorem \ref{thm:snc} below. Both Theorems \ref{thm:snc} and \ref{thm:theoremB} are important ingredients in the proof of Theorem \ref{thm:main}. Theorem \ref{thm:snc} is used to reduce Theorem \ref{thm:main} to the case that $X$ has only snc singularities. When $X$ has only snc singularities Theorem \ref{thm:theoremB} is used to begin an induction on the number of components of $X$.
\smallskip

(3) The desingularization morphism of Theorem \ref{thm:main} is functorial 
in the category of algebraic varieties over a field of characteristic zero 
with a fixed ordering on the components, and with respect to \'{e}tale (or 
smooth) morphisms that preserve the number of irreducible components of $X$ and
$D$ passing through every point. See Section \ref{sec:functoriality}. 
Note that a desingularization that avoids semi-snc and in particular snc 
points cannot be functorial with respect to \'{e}tale morphisms in general 
(as is the case for functorial resolution of singularities), because a normal 
crossings point becomes snc after an \'{e}tale morphism; see Definitions \ref{def:snc} and
Remark \ref{rem:funct}. (Non-snc are to be eliminated while snc are to be preserved.) Therefore we must restrict functoriality to a smaller class of morphisms. 
\smallskip

(4) Theorem \ref{thm:main} holds also with the following 
stronger version of condition \ref{conditiontwo}: The morphism
$f$ is a composite $\sigma_1\circ\ldots\circ\sigma_t$ of blowings-up $\s_i$, where
each $\s_i$ is an isomorphism over the semi-snc locus of the transform of $(X,D)$ by 
$\sigma_1\circ\ldots\circ\sigma_{i-1}$. Our proof provides this stronger statement,
by using a stronger version of log resolution, where every blowing
up is an isomorphism over the snc locus of the preceding transform of $D$. The latter
strong version of log resolution is proved in \cite{BDV} and in \cite[Sect.\,12]{BMinv}.
\end{remarks}

Our approach to partial resolution of singularities is based on the idea developed in \cite{BMmin} and \cite{BLM} that the 
desingularization invariant of \cite{BMinv} together with natural geometric information
can be used to characterize and compute local normal forms of mild singularities. The local normal forms in the latter involve monomials in exceptional divisors that can be simplified or \emph{cleaned} by desingularization of invariantly defined monomial marked ideals. These ideas are 
used in \cite{BMmin} and \cite{BDV} in the proofs of log resolution by a morphism which is
an isomorphism over the snc locus, and are also used in \cite{BMmin, BLM} to treat other problems
stated in \cite{Kolog}, where one wants to find a class of singularities that have to be admitted
if \emph{normal crossings} singularities in a weaker
local analytic or formal sense are to be preserved. 

In \cite{BMmin} and \cite{BDV}, the mild singularities (for example, simple normal crossings
singularities) are all singularities of a \emph{hypersurface} (see definition \ref{def:hypersurface}). The desingularization invariant for a hypersurface is
simpler than for general varieties because it begins with the \emph{order} at a point,
rather than with the \emph{Hilbert-Samuel function}, as in the general case.
Semi-simple normal crossings
singularities (Definition \ref{def:ssnc}) cannot be described as singularities of  a hypersurface
in an ambient smooth variety. An essential feature of this article is our use of the
Hilbert-Samuel function and the desingularization invariant based on it to characterize
semi-snc singularities. 
%The idea of using the desingularization invariant to find local normal forms appears below in \ref{sectioncharacterizationsemisnc}.

The results in this article form part of Franklin Vera Pacheco's Ph.D. thesis at the University of
Toronto. The authors are grateful to S\'andor Kov\'acs for explaining some of the 
motivation of the problems considered.

\section{Characterization of semi-snc points}\label{subsec:char}
The inductive characterization of semi-snc (Propostion \ref{lemmafactorssnc} 
below)
will be used after reduction of the main problem to the case that $X$ is an snc hypersurface, no component of $D$ lies in the singular locus of $X$, and $D$ is
reduced. (See \S\ref{subsec:structure} and Section \ref{sec:fixingmultiplicities}.)
Under the preceding assumptions, the main theorem is proved by induction
on the number of components of $X$, and Propostion \ref{lemmafactorssnc} is
used in the inductive step.

Propostion \ref{lemmafactorssnc} applies to points lying in at least two 
components of $X$. The inductive criterion begins with the case of a single
component. In this case, semi-snc means snc. Snc points can be 
characterized using the desingularization invariant \cite[Lemma 3.5]{BMmin}.
We begin by recalling the latter.

\begin{remark}[Characterization of snc singularities]\label{rem:characterizationsnc}
Let $D$ be a reduced Weil divisor on a smooth variety $X$. Assume that $a\in \Supp(D)$ lies in exactly $q$ irreducible components of $D$. Then $D$ is snc at $a$ if and only if the value of the desingularization invariant is $(q,0,1,0,\ldots,1,0,\infty)$, where there are $q-1$ pairs $(1,0)$. (This is in ``year zero'' --- before any blowings-up given by the
desingularization algorithm.)

The first entry of the invariant at a point $a$ of a hypersurface $D$ in a smooth variety
is the order $q$ of $D$ at $a$.  For a subvariety in general, the Hilbert-Samuel function is the first entry of the invariant. (In the case of a hypersurface, the order and the Hilbert-Samuel function each determine the other; see \cite[Remark 1.3]{BMinv} and Section \ref{sec:HS}.)
 \end{remark}
 
\begin{definition}\label{def:Hpq}
Let $H_{p,q}=H_{p,q,n}$ denote the Hilbert-Samuel function of the ideal $(x_1\dotsm x_p,y_1\dotsm y_q)$ in a ring of formal power series $\IK\llbracket x_1,\ldots,x_p,
y_1,\ldots,y_{n-p}\rrbracket$, where $p+q \leq n$. (See Section \ref{sec:HS}.)
\end{definition}

The $H_{p,q}$ are precisely the values that the Hilbert-Samuel function of $\Supp D$
can take at semi-snc points. We will omit the $n$ since it will be fixed throughout the arguments using $H_{p,q}$.

\begin{definition}\label{def:sigmapq}
Assume that $X$ is snc and that $D$ has no components in the singular locus of $X$. We define $\Sigma_{p,q}=\Sigma_{p,q}(X,D)$ as the set of points $a \in X$ such that $a$ lies
in exactly $p$ components of $X$, and $q$ is the minimum number of components of $D$ 
at $a$ which lie in any component of $X$. 
\end{definition}

For example, if $X:=(x_1x_2=0)$ and $D=(x_1=y_1=0)+(x_2=y_1y_2=0)$, then the origin is in $\Sigma_{2,1}$.\medskip
 
Having Hilbert-Samuel function $= H_{p,q}$ at a point of $\Sigma_{p,q}$ is a
necessary condition for semi-snc. But it is not sufficient, even for $(p,q) = (2,1)$,
as we will see in Example \ref{example2}. Additional geometric data is needed. This will be given using an ideal sheaf that is a final obstruction to semi-snc. Blowing up to remove this obstruction involves transformations analogous to the cleaning procedure of \cite[Section 2]{BMmin}, see Proposition \ref{propositioncasepequal2}.

Lemma \ref{lem:HSlemma} below, used in the proof of Propostion \ref{lemmafactorssnc}, provides some initial control over the divisor $D$ at a point
of $\Sigma_{p,q}$ (or $\Sigma_{q,p}$) where the Hilbert-Samuel function has the \emph{correct}
value $H_{p,q}$, provided that $p\geq 2$. 
 
\begin{definition}\label{def:pairs}
Consider a pair $(X,D)$, where $X$ is snc and no component of $D$ lies in the singular locus of $X$. Let $X_1$, \ldots, $X_m$ denote the irreducible components of $X$, with a given ordering. Let $X^i:=X_1\cup\ldots\cup X_i$, $1\leq i\leq m$. Let 
$D_i$ denote the sum of all components of $D$ lying in $X_i$; i.e. $D_i$ is the divisorial part of the restriction of $D$ to $X_i$. We will sometimes write $D_i=D|_{X_i}$. Let $D^i:=\sum_{j=1}^iD_i$.
\end{definition}  
 
\begin{definition}\label{definitionJ}
Consider a pair $(X,D)$ as in Definition \ref{def:pairs}, where $X$ is 
(locally) an embedded hypersurface in a smooth variety $Y$. Assume that 
$m\geq 2$. Let $J=J(X,D)$ 
denote the quotient ideal
 \[
 J=J(X,D):=[I_{D_m}+I_{X^{m-1}}:I_{D^{m-1}}+I_{X_m}],
 \]
where $I_{D_m}$, $I_{X^{m-1}}$, $I_{D^{m-1}}$ and $I_{X_m}$ are the defining 
ideal sheaves of $\Supp D_m$, $X^{m-1}$, $\Supp D^{m-1}$ and $X_m$ 
(respectively) on $Y$.
\end{definition}

 \begin{proposition}[Characterization of semi-snc points.]\label{lemmafactorssnc}
Consider a pair $(X,D)$, where $X$ is (locally) an embedded hypersurface in a smooth variety $Y$. Assume that $X$ is snc, $D$ is reduced and none of the components of 
$D$ lie in the singular locus of $X$. Let $a\in X$ be a point lying in at least two components of $X$. Then $(X,D)$ is semi-snc at $a$ if and only if
 \begin{enumerate}
 \item $(X^{m-1},D^{m-1})$ is semi-snc at $a$.
 \item There exist $p$ and $q$ such that $a\in\Sigma_{p,q}$ and $H_{\Supp D,a}
 =H_{p,q}$, where $H_{\Supp D,a}$ is the Hilbert-Samuel function of $\Supp D$ at the point $a$ and $H_{p,q}$ is defined as in \ref{def:Hpq}.
 \item $J_{a}=\mathcal{O}_{Y,a}$.
 \end{enumerate}
 \end{proposition}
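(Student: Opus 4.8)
The plan is to prove the two implications separately, the forward direction being essentially immediate from the definition and the converse requiring the real work.

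\medskip

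\textbf{The easy direction.} Suppose $(X,D)$ is semi-snc at $a$. Then there is a local embedding in a smooth $Y$ with coordinates $(x_1,\dots,x_p,y_1,\dots,y_r)$ in which $X=(x_1\cdots x_p=0)$ and $D=\sum\al_i(y_i=0)|_X$. Since $a$ lies in at least two components, $p=m\ge 2$ (after relabelling so that the chart sees all components of $X$ through $a$). Condition (1) is clear: $X^{m-1}=(x_1\cdots x_{m-1}=0)$ is snc, and $D^{m-1}$, being the sum of the components of $D$ lying in $X_1\cup\dots\cup X_{m-1}$, is cut out by a sub-collection of the $(y_i=0)$, so the same coordinates exhibit $(X^{m-1},D^{m-1})$ as semi-snc. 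For (2): $\Supp D$ near $a$ is defined inside $Y$ by the monomials in \eqref{eq:Dvar}, so the Hilbert--Samuel function of $\Supp D$ at $a$ is exactly $H_{p,q}$ where $q$ is the number of distinct $(y_i=0)$ actually meeting $a$ --- and one checks this $q$ is the minimum over components $X_j$ of the number of components of $D$ lying in $X_j$, i.e. $a\in\Sigma_{p,q}$. For (3), one computes $J$ directly in the monomial coordinates: $I_{D_m}=(x_1\cdots x_{m-1},\;\text{monomial in the }y\text{'s lying in }X_m)$ and similarly for the others; the quotient ideal $[I_{D_m}+I_{X^{m-1}}:I_{D^{m-1}}+I_{X_m}]$ is then a monomial quotient ideal computation that collapses to the unit ideal, precisely because in the semi-snc model the components of $D$ in $X_m$ that are ``new'' are cut by coordinates independent of those defining $D^{m-1}$.

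\medskip

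\textbf{The hard direction.} Assume (1), (2), (3); we must produce semi-snc coordinates at $a$. Since $(X^{m-1},D^{m-1})$ is semi-snc at $a$ by (1), choose a local embedding with coordinates in which $X^{m-1}=(x_1\cdots x_{m-1}=0)$ and $D^{m-1}$ is a sum of coordinate hyperplanes $(y_i=0)$. The component $X_m$ through $a$ is smooth and has snc with $X^{m-1}$ (as $X$ is snc), so after adjusting coordinates we may take $X_m=(x_m=0)$, hence $X=(x_1\cdots x_m=0)$ locally; here the hypothesis that no component of $D$ lies in $\Sing X$ is what lets us keep the $D^{m-1}$-hyperplanes transverse to everything. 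It remains to handle $D_m$, the components of $D$ lying in $X_m$ but not already among the $D^{m-1}$-hyperplanes: we must show each such component is, in suitable coordinates, of the form $(y_i=0)$ with $(y_i=0)$ snc with respect to $X$, i.e. $D_m$ can be straightened simultaneously with everything fixed so far. This is where (2) and (3) are used in tandem: condition (2) forces $\Supp D$ to have the same Hilbert--Samuel function at $a$ as the pure monomial ideal $(x_1\cdots x_p,y_1\cdots y_q)$, which rigidly constrains the ideal $I_{\Supp D}$ --- I expect an argument via the graded ring / initial ideal showing $I_{\Supp D,a}$ is, up to a coordinate change, exactly a monomial ideal of that shape; and condition (3), the vanishing $J_a=\cO_{Y,a}$, is the statement that the ``extra'' components of $D_m$ impose no obstruction to being pulled out as coordinate hyperplanes compatible with $X^{m-1}$ and $X_m$ — it rules out the kind of tangency exhibited in Example \ref{example2}.

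\medskip

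\textbf{Expected main obstacle.} The crux is the converse step of straightening $D_m$: translating the algebraic conditions (equality of Hilbert--Samuel functions and triviality of the quotient ideal $J$) into an actual analytic coordinate change. The Hilbert--Samuel equality (2) must be leveraged to pin down the normal form of $\Supp D$ as a monomial ideal --- presumably by comparing lengths in the associated graded ring and arguing that any deviation from the monomial model would strictly increase $H_{\Supp D,a}$ past $H_{p,q}$ --- and then (3) must be shown to be exactly the residual condition that lets one choose the coordinates straightening $D_m$ to be compatible with those already chosen for $X^{m-1}$ and $X_m$. I would expect to reduce to the formal/complete local ring, use the inductive coordinates from (1), and then perform a careful case analysis on which components of $D_m$ are ``new'' versus shared, with the quotient-ideal computation of $J$ doing the bookkeeping; keeping the combinatorics of the index sets (which $y_i$ lie in which $X_j$) straight will be the fiddly part.
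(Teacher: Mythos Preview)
Your overall architecture is right, and you correctly locate the difficulty in the converse. But your plan for how (2) and (3) interact is off in a way that matters.

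You write that condition (2) should, via an initial-ideal argument, show $I_{\Supp D,a}$ is monomial up to a coordinate change, with (3) then handling a residual compatibility issue. This is not how the two conditions split the work, and in fact Example~\ref{example2} (which you cite) already shows that (2) alone \emph{cannot} force the ideal into monomial form: there the Hilbert--Samuel function equals $H_{2,1}$ but the pair is not semi-snc. So any argument that tries to extract a monomial normal form from (2) alone will fail.

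What the paper actually does is this. From (1) you get coordinates with $D^{m-1}=(x_1\cdots x_{p-1}=y_1\cdots y_r=0)$ and $D_m=(x_p=f=0)$ for some $f$. The content of (2) is then \emph{Lemma~\ref{lem:HSlemma}}: the equality $H_{\Supp D,a}=H_{p,q}$ at a point of $\Sigma_{p,q}$ forces $r=q$, $\ord f=q$, and $f\in(x_1\cdots x_{p-1},\,y_1\cdots y_q)$, so one may write
\[
f=x_1\cdots x_{p-1}\,g_1+y_1\cdots y_q\,g_2.
\]
That is all (2) gives you. Now one \emph{computes} the quotient ideal explicitly: $J_a=(x_p,\,x_1\cdots x_{p-1},\,g_2)$. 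Hence condition (3) is exactly the statement that $g_2$ is a unit. Once $g_2$ is a unit, an elementary manipulation shows
\[
D=(x_1\cdots x_{p-1}=y_1\cdots y_q=0)+(x_p=f=0)=(x_1\cdots x_p=f=0),
\]
i.e.\ the two pieces of $D$ merge into a single complete intersection. Finally, $f|_{x_p=0}$ factors into $q$ order-one factors $f_1\cdots f_q$, and one checks (after passing to the completion and looking at $\widehat{\fm}/\widehat{\fm}^2$) that $x_1,\dots,x_p,f_1,\dots,f_q$ extend to a regular system of parameters; the $f_i$ are the new $y$-coordinates.

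So the missing idea in your plan is the explicit computation $J_a=(x_p,x_1\cdots x_{p-1},g_2)$ and the recognition that (3) $\Leftrightarrow$ $g_2$ a unit; this is the mechanism that converts the constrained-but-not-yet-monomial form coming out of Lemma~\ref{lem:HSlemma} into the semi-snc normal form. Your sketch of the forward direction is fine in spirit, though note your formula for $I_{D_m}$ is garbled: $D_m\subset X_m$, so $I_{D_m}=(x_p,\,y_1\cdots y_q)$, and the quotient $[(x_p,x_1\cdots x_{p-1},y_1\cdots y_q):(x_p,x_1\cdots x_{p-1},y_1\cdots y_q)]$ is trivially the unit ideal.
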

 
 Proposition \ref{lemmafactorssnc} will be proved at the end of Section \ref{sec:HS}.

 \begin{remarks} (1) If $a$ lies in a single component of $X$, then Condition (1) is vacuous and $J$ is not defined. In this case, Remark \ref{rem:characterizationsnc} replaces Lemma
\ref{lemmafactorssnc}.
\smallskip

(2) We will use Proposition \ref{lemmafactorssnc} to remove unwanted 
singularities at points lying in more than two components of $X$,  
by first blowing up to ensure condition (2), and then applying further blowings-up to get condition (3); see Section \ref{sec:morethantwocomponents}. 
At points lying in two components of $X$, it is simpler to control the behavior of $J(X,D)$ after admissible blowings-up; see Section \ref{sec:twocomponents}.
In this case, condition (3) is obtained by a sequence of blowings-up that are very easily described (Proposition \ref{propositioncasepequal2}). 
\end{remarks}

%%%%%%%%%%%%%%%%%%%%%%%%%%%%%%%%%%%%%%%%%%%%%%%%%%%%%%%%%%%%%%%%%%%%%%%%%%%%%%%%%%%%%%%%%%%%%%%%%%%%%%%%%%%%%%%%%%%%%%%%%%%%%%%%%%%%%%%%%%%%%%%%%%%%%%%%%%%%%%%%%%%%%%%%%%%%%%%%%%%%%%%%%%%%%%%%%%%%%%%%%%%%%%%%%%%%%%%%%%%%%%%%%%%%%%%%%%%%%%%%%%%%%%%%%%%%%%%%%%%%%%%%%%%%%%%%

\section{Basic notions and structure of the proof}\label{sec:prelim}

\begin{definition}\label{def:hypersurface}
We say that $X$ is a \emph{hypersurface} at a point $a$ if, locally at $a$, $X$ can
be defined by a principal ideal on a smooth variety.
\end{definition}

\begin{definitions}[cf. Remark \ref{rem:main}(1)]\label{def:snc} 
Let $X$ be an algebraic variety over a field of characteristic zero, and $D$ a Weil $\IQ$-divisor on $X$. The pair $(X,D)$ is said to be \emph{simple normal crossings (snc)} at a closed point
$a \in X$ if $X$ is smooth at $a$ and there is a regular coordinate neighborhood $U$ of $a$ with a system of coordinates $(x_1,x_2,\ldots,x_n)$ such that $\Supp D|_U =(x_1x_2\ldots x_k=0)$, for some $k\leq n$ (or perhaps $\Supp D|_U = \emptyset$). 
Clearly, the set of snc points is open in 
$X$. The \emph{snc locus} of $(X,D)$ is the largest subset of $X$ on which 
$(X,D)$ is snc. The pair $(X,D)$ is \emph{snc} if it is snc at every point of $X$.

Likewise, we will say that an algebraic variety $X$ is \emph{simple normal crossings (snc)}
at $a\in X$ if there is a neighbourhood $U$ of $a$ in $X$ and a local embedding 
$X|_U \stackrel{\iota}{\hookrightarrow}Y$, where $Y$ is a smooth variety, such that $(Y,X|_U)$ is simple normal crossings at $\iota(a)$. (Thus, if $X$ is snc at $a$, then $X$ is a \emph{hypersurface} at $a$.)

The pair $(X,D)$ is called \emph{normal crossings (nc)} at $a\in X$ if there is an \'{e}tale morphism $f:U\rightarrow X$ and a point $b \in U$ such that $a=f(b)$ and $(U,f^{*}(D))$ is snc at $b$. 

If $D=\sum a_iD_i$, where $D_i$ are prime divisors, then $D_\text{red}$ denotes $\sum D_i$, i.e. $D_\text{red}$ is $\Supp D$ considered as a divisor.
\end{definitions}

\begin{example}\label{ex:node}
The curve $X:=(y^2+x^2+x^3=0)\subset\mathbb{A}^2$ is nc but not snc at $0$. It is not snc because it has only one irreducible component which is not smooth at $0$. But $X$ is
nc at $0$ because $X$ has two analytic branches at $0$ which intersect transversely.
\end{example}

It is important to distinguish between nc and snc. For example, the analogues for nc of
log resolution preserving the nc locus or of Theorem \ref{thm:main} are false:

\begin{example}\label{ex:pp}
Consider the pair $(\mathbb{C}^3, D)$, where $D= (x^2-yz^2=0)$. The singularity at $0$ is 
called a \emph{pinch point}. The pair is nc at every point except the origin. The analogue of Theorem
\ref{thm:main} for nc fails in this example because we cannot get rid of the pinch point without blowing up the $y$-axis, according to the following argument of Koll\'{a}r 
\cite[Ex.\,8]{Kolog} (see also Fujino \cite[Cor.\,3.6.10]{Fuji}). The hypersurface $D$ has two sheets over every non-zero point of $(z=0)$. Going around the origin in $(z=x=0)$ permutes the sheets, and this phenomenon persists after any birational morphism which is an isomorphim over the generic point of $(z=x=0)$.
\end{example}

\begin{definitions}\label{def:transf}
If $f:X\rightarrow Y$ is a rational mapping and $Z\subset X$ is a subvariety such that $f$ is defined in a dense subset $Z_0$, then we define the \emph{birational transform} $f_{*}(Z)$
of $Z$ as the closure of $f(Z_0)$ in $Y$. In the case that $f$ is birational, then we have the notion of $f_{*}^{-1}(Z)$ for subvarieties $Z\subset Y$ such that $f^{-1}$ is defined in a dense 
subset of $Z$. For a divisor $D=\sum \alpha_i D_i$, where the $D_i$ are prime divisors, 
we define $f_{*}^{-1}(D):=\sum\alpha_i f_{*}^{-1}(D_i)$.

If $f:X\rightarrow Y$ is a birational mapping, we let $\Ex(f)$ denote the set of points $a \in X$ 
where $f$ is not biregular; i.e., $f^{-1}$ is not a morphism at $f(a)$. We consider $\Ex(f)$ with the structure of a reduced subvariety of $X$. 

As before, consider $(X,D)$, where $X$ is an algebraic variety $X$ over a field of characteristic zero and $D$ is a Weil divisor. Let $f:\tX\rightarrow X$ be a proper birational map and \emph{assume that $\Ex(f)$ is a divisor}. Then we define 
\[
D' := f_{*}^{-1}(D)\quad\text{ and }\quad\tD := D' +\Ex(f).
\]
We call $D'$ the \emph{strict} or \emph{birational transform} of $D$ by $f$, and we call
$\tD$ the \emph{total transform} of $D$.  We also call $(\tX,\tD)$ the \emph{(total) transform} 
of $(X,D)$ by $f$.
\end{definitions}

\begin{remark}\label{rem:transf}
It will be convenient to treat $D'$ and $\Ex(f)$ separately in our proof of Theorem \ref{thm:main} --- we need to count the components of $D'$ rather than of $\tD$. For this reason, we will work with data given by a triple $(X,D,E)$, where initially $(X,D)$ is the given pair and $E=\emptyset$. After a blowing-up $f:X'\rightarrow X$, we will consider the transformed data given by $(X',D',\tE)$, where $D':=f_{*}^{-1}(D)$ as above and 
$\tE:=f_{*}^{-1}(E)+\Ex(f)$. 

We will write $f:(X',D')\rightarrow (X,D)$ to mean that $f:X'\rightarrow X$ is birational and $D'$ is the strict transform of $D$ by $f$. 
\end{remark}

\begin{definition}\label{def:triplessnc}
We say that a triple $(X,D,E)$, where $D$ and $E$ are both divisors on $X$, is 
\emph{semi-snc} if $(X, D+E)$ is semi-snc (see Definition \ref{def:ssnc}).
\end{definition}

For economy of notation, when there is no possibility of confusion, we will sometimes 
denote the transform of $(X,D,E)$ by a
sequence of blowings-up still simply as $(X,D,E)$. Other constructions depending on $X$ and $D$ are also denoted by symbols that will be preserved after transformation by blowings-up. This convention is convenient for the purpose of describing an algorithm,
and imitates computer programs written in imperative languages. 

\begin{example}
 Consider $(X,D)$, where $X=(x_1^2-x_2^2x_3=0)\subset\mathbb{A}^3$ and $D=(x_1=x_3=0)$. Let $f$ denote the blowing-up of $\IA^3$ with center the $x_3$-axis. Then, the strict transform $X' = \tX$ of $X$ by $f$ (i.e., the blowing-up of $X$ with center the 
 $x_3$-axis) lies in one chart of $f$ (the ``$x_2$-chart'') with coordinates $(y_1,y_2,y_3)$ in which $f$ is given by
\[
x_1=y_1y_2, \quad x_2=y_2, \quad x_3=y_3.
\]
Therefore we have $\tX =(y_1^2-y_3=0)$ and $\tD =f^{-1}_\ast (D)+E$, where $E$ is the exceptional divisor; $E=(y_1^2-y_3=y_2=0)$. Then
\begin{align*}
 \tD=&(y_1=y_3=0)+(y_1^2-y_3=y_2=0)\\
   =&(y_1=y_1^2-y_3=0)+(y_1^2-y_3=y_2=0).
\end{align*}
We see that, at the origin in the system of coordinates $z_1:=y_1$, $z_2:=y_2$, $z_3:=y_3-y_1^2$, the pair $(\tX,\tD)$ is given by $\tX=(z_3=0)$, $\tD=(z_3=y_1=0)+(z_3=y_2=0)$, and is therefore snc.
\end{example}

\begin{example}\label{ex:multiplicities}
If $X=(xy=0)\subset Y:=\mathbb{A}^3$ and $D=a_1D_1+a_2D_2$, where $D_1=(x=z=0)$ and $D_2=(y=z=0)$, then the pair $(X,D)$ is semi-snc if and only if $a_1=a_2$.
\end{example}

At a semi-snc point, the local picture is that $X$ is a snc hypersurface in a smooth
variety $Y$, and $D$ is given by the intersection of $X$ with a snc divisor in $Y$ which is
transverse to $X$ (in Example \ref{ex:multiplicities}, $(z=0)$). For this reason, we should have the same multiplicities when one component of this divisor intersects different components of $X$.

\subsection{Structure of the proof}\label{subsec:structure}
The desingularization morphism from Theorem \ref{thm:main} is a composition of blowings-up with smooth centers. In the rest of the paper, $(X,D)$ will always denote a pair satisfying the assumptions of Theorem \ref{thm:main}. Our proof of the theorem involves an algorithm for successively choosing the centers of blowings-up, that will be described precisely in section \ref{sec:maintheorem}. We will give an idea of the main ingredients in the current subsection. As noted in Remark \ref{rem:main} (2), the following two theorems are previously known special cases of our main result that are used in its proof. 

\begin{theorem}[snc-strict desingularization]\label{thm:snc}\label{cor:corollarytheoremB}
Let $X$ denote a reduced scheme of finite type over a field of characteristic zero. Then, there is a finite sequence of blowings-up with smooth centers 
\begin{equation}\label{eqn:blowupsequence}
X:=X_0\stackrel{\sigma_1}{\longleftarrow} X_1\stackrel{\sigma_2}{\longleftarrow}\ldots\stackrel{\sigma_t}{\longleftarrow} X_t=:\tX,
\end{equation}
such that, if $\tD$ denotes the exceptional divisor of \eqref{eqn:blowupsequence}, then $(\tX,\tD)$ is semi-snc and $(X,0)\leftarrow(\tX,\tD)$ is an isomorphism over the snc-locus $X^{snc}$ of $X$.
\end{theorem}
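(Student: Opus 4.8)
The plan is to reduce to a sequence of standard desingularization results applied to the defining ideal of $X$ and to certain monomial ideals in exceptional divisors, using the desingularization invariant of \cite{BMinv} throughout. First I would embed $X$ locally in a smooth ambient variety $Y$ and run the canonical desingularization algorithm for the pair $(Y,X)$, i.e. resolve $X$ by blowings-up with smooth admissible centers that are isomorphisms over the snc locus $X^{\mathrm{snc}}$. Here one must work not with a hypersurface but with the Hilbert--Samuel function as the leading entry of the invariant, since the components of $X$ may have various dimensions and the singular locus of $X$ is where the Hilbert--Samuel function exceeds the generic value $H_{p,1}$ on the $p$-fold crossing stratum. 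The content of Remark \ref{rem:characterizationsnc} (extended to general $X$ via the identification of order and Hilbert--Samuel function for hypersurfaces, and its multi-dimensional analogue) is that $a\in X^{\mathrm{snc}}$ precisely when the invariant at $a$ (in year zero) has the normal-form value $(p,0,1,0,\ldots,1,0,\infty)$ with $p-1$ pairs $(1,0)$, $p$ being the number of components through $a$; so the algorithm, being an isomorphism wherever the invariant already has its minimal stratified value, is automatically an isomorphism over $X^{\mathrm{snc}}$.

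After this first block of blowings-up, $X$ is transformed to a variety $X'$ which is everywhere snc as a variety, and the accumulated exceptional divisor $E'$ has simple normal crossings with respect to $X'$ in the ambient $Y'$ — but $E'$ restricted to $X'$ need not be snc on $X'$, because a single ambient exceptional hypersurface can meet $X'$ in a way that fails \eqref{eq:D}: its trace on one component of $X'$ may differ from its trace on another, or it may contain a crossing locus. So the second step is a \emph{cleaning} procedure in the spirit of \cite[Sec.~2]{BMmin}: the failure of $(X',E')$ to be semi-snc is measured by monomial marked ideals in the exceptional divisors (invariantly attached to the stratification of $X'$ by its crossing components), and I would invoke the canonical principalization/monomialization of such marked ideals to produce further admissible blowings-up, each an isomorphism over the locus where the monomial ideal is already trivial — which contains $X^{\mathrm{snc}}$ since there $E'=\emptyset$ to begin with. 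Iterating over the finitely many strata (or, better, running a single invariant-driven algorithm as in \cite[Sect.~12]{BMinv}) terminates and yields $(\tX,\tD)$ semi-snc with $\tD$ the total exceptional divisor.

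The main obstacle I expect is bookkeeping the exceptional divisor correctly through the first (variety-resolving) block so that it remains snc with respect to $X'$ and so that the cleaning step in the second block is governed by genuinely invariant, functorial data: one needs the centers in the first block to be admissible not just for $Y$ but compatibly with all components of $X$, and one needs to know that a component of $E'$ never creates a new non-snc \emph{variety} point of $X'$ (so that the two blocks do not interfere). This is exactly the point where the hypersurface-based arguments of \cite{BMmin} do not apply verbatim and the Hilbert--Samuel-based invariant is essential; granting the functorial resolution of \cite{BMinv} and the marked-ideal cleaning of \cite{BMmin}, the rest is assembling these inputs and checking at each stage that the center lies outside $X^{\mathrm{snc}}$, which follows from the normal-form characterization of the invariant recalled in Remark \ref{rem:characterizationsnc}.
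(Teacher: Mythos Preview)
Your first block contains a real gap. The canonical desingularization algorithm of \cite{BMinv} applied to $X$ (or to the pair $(Y,X)$) does \emph{not} stop at snc points: it drives the invariant down to its absolute minimum, which corresponds to $X$ smooth. At an snc point where $p\geq 2$ components meet, the invariant equals $(p,0,1,0,\ldots,1,0,\infty)$ (or the Hilbert--Samuel analogue), which is not the minimum; such a point can lie in the maximum locus (e.g.\ when $X$ is already globally snc), and the algorithm will blow it up. So ``the algorithm\ldots is automatically an isomorphism over $X^{\mathrm{snc}}$'' is false as stated. What is true is that one can try to blow up only those components of the maximum locus that contain non-snc points, but proving that this modified procedure still terminates and yields the desired normal form is precisely the content of the snc-strict log resolution theorem (Theorem~\ref{thm:theoremB}) --- and that theorem is stated and proved only for \emph{hypersurfaces}.

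The paper's proof is organized around exactly this point, and differs from your outline in two ways. First, rather than attempting an snc-preserving resolution of a general $X$, it \emph{reduces to the hypersurface case}: snc points have embedding codimension $\leq 1$, so one can run the Hilbert--Samuel--based algorithm only until all points of embedding codimension $>1$ are gone, without touching any snc point. (When $X$ is not of pure dimension there is a subtlety --- the usual algorithm may blow up hypersurface points in higher-dimensional components before lower-dimensional components become hypersurfaces --- and the paper handles this with the modified invariant $(\#(a),H_{X\times\mathbb{A}^{d-q(a)},(a,0)})$ from \cite{BMT}.) Second, once $X$ is everywhere a hypersurface, the paper simply \emph{applies Theorem~\ref{thm:theoremB}} to each local codimension-one embedding and glues by functoriality; Theorem~\ref{thm:theoremB} already produces a semi-snc pair with the exceptional divisor, so no separate cleaning block is needed. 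Your second block is therefore redundant once the reduction is made, and your first block needs to be replaced by this reduction-to-hypersurface argument.
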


Theorem \ref{thm:snc} can be strengthened so that, not only is $\tX\rightarrow X$ an isomorphism over the snc locus of $X$ but also $\sigma_{k+1}$ is an isomorphism over the semi-snc points of  $(X_k,D_k)$, where $D_k$ is the exeptional divisor of $\sigma_{1}\circ\ldots\circ\sigma_{k}$, for every $k=0,\ldots,t-1$. (See \cite{BDV}; cf. Remarks \ref{rem:main}(4)).

\begin{theorem}[snc-strict log resolution {\cite[Thm.\,3.1]{BMmin}}]\label{thm:theoremB}
Consider a pair $(X,D)$, as in Theorem \ref{thm:main}. Assume that $X$ is smooth. Then there is a finite sequence of blowings-up with smooth centers over the support of $D$ (or its strict transforms)
\[
X:=X_0\stackrel{\sigma_1}{\longleftarrow} X_1\stackrel{\sigma_2}{\longleftarrow}\ldots\stackrel{\sigma_t}{\longleftarrow} X_t=:\tX,
\]
such that the (reduced) total transform of $D$ is snc and $X\leftarrow \tX$ is an isomorphism over the snc locus of $(X,D)$.
\end{theorem}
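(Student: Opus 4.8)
The plan is to run the canonical desingularization algorithm of \cite{BMinv}, applied on the smooth variety $X$ to the marked ideal $(\cI_D,1)$ whose cosupport is $\Supp D$ (so that the process principalizes $\cI_D$, and in particular makes the reduced total transform of $D$ snc), but to halt the blowing-up at a point as soon as the desingularization invariant there attains a value which, by Remark \ref{rem:characterizationsnc}, certifies that $(X,D)$ has already become snc there. Recall that $\inv$ of \cite{BMinv} is built by iteration: at a point $a$ one records the order (in general, the Hilbert--Samuel function) of the controlling ideal and the number of relevant exceptional divisors, passes to a companion/coefficient ideal on a hypersurface of maximal contact, and recurses, the last recorded entry being $\infty$ once an already-resolved, monomial situation is reached. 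The main theorem of \cite{BMinv} supplies what the unmodified algorithm needs: $\inv$ is upper semicontinuous with Zariski-closed maximal locus; that locus is smooth and has simple normal crossings with the accumulated exceptional divisors, hence is an admissible center; $\inv$ is trivial exactly off $\cosupp\cI_D=\Supp D$ (or its strict transform); and blowing up the maximal locus strictly lowers $\max\inv$ in the relevant well-ordered value set, so that after finitely many steps $\cI_D$ becomes a monomial in the exceptional divisors --- which is ordinary log resolution, but not yet snc-strict.

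To make it snc-strict I would introduce the truncated invariant $\oinv$: at a point $a$ lying in exactly $q$ components of the current divisor $D_{\mathrm{red}}+E$, set $\oinv_a$ equal to a fixed terminal symbol whenever $\inv_a$ equals the snc-value $(q,0,1,0,\ldots,1,0,\infty)$ of Remark \ref{rem:characterizationsnc}, and $\oinv_a:=\inv_a$ otherwise. By the ``if and only if'' of Remark \ref{rem:characterizationsnc} --- in the form valid in every year, once exceptional divisors are present --- the locus where $\oinv$ is terminal is exactly the snc locus of the current pair, which is open, and off it $\oinv$ coincides with $\inv$. The modified algorithm blows up the maximal locus of $\oinv$, so by construction it never blows up an snc point. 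The crux will be to verify that this still defines a terminating resolution process: $\oinv$ is upper semicontinuous (clear, as it agrees with $\inv$ off an open set and is terminal on it); each maximal locus of $\oinv$ is a smooth center with simple normal crossings with $E$, hence admissible; it lies in $\cosupp\cI_D$ (or its strict transform), so the blowings-up are over $\Supp D$; and blowing it up strictly lowers $\max\oinv$, which one should reduce to the descent property of $\inv$ away from the snc locus, using that a stratum $\{\inv=v\}$ with $v$ not an snc-value consists entirely of non-snc points (by the characterization). Granting this, after finitely many steps $\oinv$ is terminal everywhere, i.e. $(\tX,\tD)$ is snc.

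Finally, snc-strictness itself is easy once the above is in place: writing $V$ for the snc locus of $(X,D)$, a blowing-up whose center misses an open snc set leaves it isomorphic and still snc, so the successive preimages of $V$ stay snc and the current snc locus always contains the preimage of $V$; since each center avoids the current snc locus, no center meets the preimage of $V$, and $f$ restricts to an isomorphism over $V$ --- applied one blowing-up at a time this even gives the stronger statement of Remark \ref{rem:main}(4). The real obstacle is the input used in the second paragraph: proving that Remark \ref{rem:characterizationsnc} (that is, \cite[Lemma 3.5]{BMmin}) persists in every year and not just in year zero. At a point that has become snc during the process, $D_{\mathrm{red}}+E$ has a local normal form in which exceptional monomials are entangled with the equations of the components of $D$, and recognizing it by the value of $\inv$ requires first ``cleaning'' --- separating the exceptional monomial factor by desingularizing an invariantly defined monomial marked ideal, as in \cite[Section 2]{BMmin} --- after which $\inv$ takes exactly the value $(q,0,1,0,\ldots,1,0,\infty)$. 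Interleaving the cleaning blowings-up with the main ones while keeping all centers admissible and disjoint from the snc locus, and checking that the whole construction respects the functoriality of \cite{BMinv}, will be the technical heart of the argument.
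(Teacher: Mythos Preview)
The paper does not give its own proof of this theorem: it is quoted verbatim as \cite[Thm.\,3.1]{BMmin} and used as a black box (see Remark \ref{rem:main}(2) and the proof of Theorem \ref{thm:snc}). So there is nothing in this paper to compare your argument against, beyond the philosophy sketched in the introduction and in Remark \ref{rem:characterizationsnc}.

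That said, your outline is consistent with what the paper indicates is the strategy of \cite{BMmin}: characterize snc points by the value of the desingularization invariant together with the number of components (Remark \ref{rem:characterizationsnc}), run the algorithm of \cite{BMinv} but skip components of the maximal locus that consist of snc points, and handle the interaction with exceptional divisors by the cleaning procedure of \cite[Section 2]{BMmin}. You have also correctly located the genuine technical content: Remark \ref{rem:characterizationsnc} is stated only in year zero, and extending it to later years --- where the local equations of $D$ are entangled with exceptional monomials --- is exactly what requires the cleaning step and is not a formality. One point to be careful about in your truncation $\oinv$: you need not only that the snc locus is open, but that each irreducible component of the maximal locus of $\inv$ either consists entirely of snc points or entirely of non-snc points (so that discarding the snc components still leaves a closed, smooth, admissible center). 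This is the kind of statement that \cite{BMmin} establishes using the invariant characterization, and it does not follow from semicontinuity alone; compare the analogous arguments in this paper at Lemma \ref{lem:centersinsigmapq} and Lemma \ref{lem:pgreaterthanthree}.
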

\begin{remark}
Theorems \ref{thm:snc} and \ref{thm:theoremB} are both functorial in the sense of Remark \ref{rem:main}(3). Moreover, regarding $D$ as a hypersurface in $X$, the blow-up sequence for $D$ is independent of the embedding space $X$. Theorem \ref{thm:snc} follows from functoriality in Theorem \ref{thm:theoremB}.
\end{remark}

\begin{proof}[Proof of Theorem \ref{thm:snc}]
We can first reduce Theorem \ref{thm:snc} to the case that $X$ is a hypersurface: If $X$ is of pure dimension, this reduction follows simply from the strong desingularization algorithm of \cite{BMinv,BMfunct}. The algorithm involves blowing up with smooth centers in the maximum strata of the Hilbert-Samuel function $H_{X,a}$. The latter determines the local embedding dimension $e_{X}(a):=H_{X,a}(1)-1$, so the algorithm first eliminates points of embedding codimension $>1$ without modifying nc points.

When $X$ is not of pure dimension the desingularization algorithm \cite{BMmin,BMfunct} may involve blowing up hypersurface singularities in higher dimensional components of $X$ before $X$ becomes a hypersurface everywhere. This problem can be corrected by a modification of the desingularization invariant described in \cite{BMT}: 

Let $\#(a)$ denote the number of different dimensions of irreducible components of $X$ at $a\in X$. Let $q(a)$ be the smallest dimension of an irreducible component of $X$ at $a$ and set $d:=dim(X)$. Then, instead of using the Hilbert-Samuel function as first entry of the invariant, we use the pair $\phi(a):=(\#(a),H_{X\times\mathbb{A}^{d-q(a)},(a,0)})$.

The original and modified invariants admit the same local presentations (in the sense of \cite{BMinv}). This implies that every component of a constant locus 
of one of the invariants is also a component of a constant locus of the other. The modification ensures that the irreducible components of the maximal locus of the usual invariant are blown up in a convenient order rather that at the same time. Since the modified invariant begins with $\#(a)$, points where there are components of different dimensions will be blown up first. Points with $\#(a)>1$ are not hypersurface points.

If $\#(a)=\#(b)=1$ and $q(a)<q(b)$, then the adjusted Hilbert-Samuel function guarantees that the point with larger value of 
\[
H_{X\times\mathbb{A}^{d-q(a)}}(1)=e(\cdot)+d-q(\cdot)+1,
\]
where $e=e_{X}$, will be blown up first. In particular, non-hypersurface singularities (where $e(\cdot)-q(\cdot)>1$) will be blown up before hypersurface singularities (where $e(\cdot)-q(\cdot)\leq1$).

We can thus reduce to the case in which $X$ is everywhere a hypersurface. Then 
$X$ locally admits a codimension one embedding in a smooth variety. For each 
local embedding we can apply Theorem \ref{thm:theoremB}. Functoriality in 
Theorem \ref{thm:theoremB} (with respect to embeddings, and \'{e}tale 
morphisms preserving the number of components) can be use to show that the local desingularizations glue together to define global centers of blowing up for $X$ (cf. \cite[proof of Prop.\,3.37]{Ko}).
\end{proof}

We now outline the proof of the main theorem. First, we can use Theorem \ref{thm:snc} to reduce to the case that $X$ is snc; see Section \ref{sec:maintheorem}, Step $1$. Moreover, there is a simple combinatorial argument to reduce to the case that $D$ is a \emph{reduced} divisor (i.e., each $\al_i = 1$ in
Definition \ref{def:ssnc}); see Step $4$ in Section \ref{sec:maintheorem} and Section \ref{sec:fixingmultiplicities}. 

So we can assume that $X$ is snc and $D$ is reduced. We now argue by induction on the number of components of $X$. 

To begin the induction (Section \ref{sec:maintheorem}, Step $3$), we use Theorem \ref{thm:theoremB} to transform the first component of $X$ together with the components of $D$ lying in it, into a semi-snc pair. By induction, we can assume that the pair given by $X$ minus its last component, together with the corresponding restriction of $D$, is semi-snc. (By \emph{restriction} we mean the divisorial part of the restriction of $D$). To complete the inductive step,
we then have to describe further blowings-up to remove the unwanted singularities in the last component of $X$. These blowings-up are separated into blocks which resolve the non-semi-snc singularities in a sequence of strata $\Sigma_{p,q}$ that exhaust the variety; see Definition \ref{def:sigmapq}.

Note first that, in the special case that $X$ is snc, each component of $D$ either lies in 
precisely one component of $X$ (as, for example, if $(X,D)$ is semi-snc) or it is a component of the intersection of a pair of components of $X$ (e.g., if $X:=(xy=0)\subset\mathbb{A}^2$ and $D=(x=y=0)$). We can reduce to the case that each component of $D$ lies in precisely one component of $X$ by blowing up to eliminate components of $D$ that are contained in the singular locus of $X$ (see Section \ref{sec:maintheorem}, Step $2$). Except for this step, our algorithm never involves blowing up with centers of codimension one in $X$.

We remove non-semi-snc singularities iteratively in the strata $\Sigma_{p,q}$, for decreasing values of $(p,q)$. The cases $p=1$, $p=2$ and $p\geq3$ are treated differently. 

In the case $p=1$ the notions of snc and semi-snc coincide, so again we use snc-strict
log resolution (Theorem \ref{thm:theoremB}). The cases $p=2$ and $p\geq3$ will be treated in sections \ref{sec:twocomponents} and \ref{sec:morethantwocomponents}, respectively. All of these cases are part of Step $3$ in Section \ref{sec:maintheorem}.

As remarked in Section \ref{sec:intro}, our approach is based on the idea that the
desingularization invariant of \cite{BMinv} together with natural geometric information
can be used to characterize mild singularities. For snc singularities, it is enough to
use the desingularization invariant for a hypersurface together with the number of irreducible components
at a point, see \cite[\S3]{BMmin}. 

In this article, the main object is a pair $(X,D)$. If $X$ is locally embedded as a hypersurface
in a smooth variety $Y$ (for example, if $X$ is snc), then (the support of) $D$ is of 
codimension two in $Y$. We will need the desingularization invariant for the support of $D$.
The first entry in this invariant is the \emph{Hilbert-Samuel function}
of the local ring of $\Supp D$ at a point (see Section \ref{sec:HS} below). Information
coming from the Hilbert-Samuel function will be used to identify non-semi-snc singularities.
%%%%%%%%%%%%%%%%%%%%%%%%%%%%%%%%%%%%%%%%%%%%%%%%%%%%%%%%%%%%%%%%%%%%%%%%%%%%%%%%%%%%%%%%%%%%%%%%%%%%%%%%%%%%%%%%%%%%%%%%%%%%%%%%%%%%%%%%%%%%%%%%%%%%%%%%%%%%%%%%%%%%%%%%%%%%%%%%%%%%%%%%%%%%%%%%%%%%%%%%%%%%%%%%%%%%%%%%%%%%%%%%%%%%%%%%%%%%%%%%%%%%%%%%%%%%%%%%%%%%%%%%%%%%%%%%

\section{The Hilbert-Samuel function and semi-simple normal crossings}\label{sec:HS}
Lemma \ref{lem:HSlemma} of this section plays an important
part in our use of the Hilbert-Samuel function to characterize semi-snc points.
We begin with the definition of the Hilbert-Samuel function and
its relationship with the diagram of initial exponents (cf. \cite{BMjams}). 
At the end of this section, we use Lemma \ref{lem:HSlemma} to prove the 
inductive characterization of semi-snc (Lemma \ref{lemmafactorssnc}).

\begin{definition}\label{def:HS}
Let $A$ denote a Noetherian local ring $A$ with maximal ideal $\fm$. The \emph{Hilbert-Samuel function} $H_A \in \IN^{\IN}$ of $A$ is defined by
\[
H_A(k) := \length \frac{A}{\fm^{k+1}}, \quad k \in \IN.
\]
If $I \subset A$ is an ideal, we sometimes write $H_{I}:=H_{A/I}$.
If $X$ is an algebraic variety and $a\in X$ is a closed point, we define
$H_{X,a}:=H_{{\cO}_{X,a}}$, where $\cO_{X,a}$ denotes the local ring of $X$ at $a$.
\end{definition}

\begin{definition}\label{definitionorderHSfunction}
Let $f,g \in \mathbb{N}^{\mathbb{N}}$. We say that $f>g$ if $f(n)\geq g(n)$, for every $n$, and  $f(m)>g(m)$, for some $m$. This relation induces a partial order on the set of all possible values for the Hilbert-Samuel functions of Noetherian local rings.
\end{definition}
Note that $f\nleq g$ if and only if either $f>g$ or $f$ is incomparable with $g$.

Let $\wA$ denotes the completion of $A$ with respect to $\fm$. Then $H_{A}=H_{\wA}$, see \cite[\S24.D]{Ma}. If $A$ is regular, then we can identify $\wA$ with a ring of formal power series, $\IK\llbracket x\rrbracket$, where $x=(x_1,\ldots,x_n)$. Then $\fn := (x_1,\ldots,x_n)$
is the maximal ideal of $\IK\llbracket x\rrbracket$. If $I\subset \IK\llbracket x\rrbracket$ is an
ideal, then
\[
H_I(k):=\dim_{\IK}\frac{\IK\llbracket x\rrbracket}{I+\fn^{k+1}}.
\]

If $\alpha=(\alpha_1,\ldots,\alpha_n)\in\mathbb{N}^{n}$, set $|\alpha|:=\alpha_1+\ldots+\alpha_n$. The lexicographic order of $(n+1)$-tuples, $(|\alpha|,\alpha_1,\ldots,\alpha_n)$ induces a total ordering of $\mathbb{N}^n$. Let $f\in \IK\llbracket x\rrbracket$ and write $f=\sum_{\alpha\in\mathbb{N}^n}f_\alpha x^{\alpha}$, where $x^{\alpha}$ denotes $x_1^{\alpha_1}\dotsm x_n^{\alpha_n}$. Define $\supp(f)=\{\alpha\in\mathbb{N}^n:\,f_\alpha\neq0\}$. The \emph{initial exponent} $\exp(f)$ is defined as the smallest element of $\supp(f)$. If $\al=\exp(f)$, then $f_\alpha x^{\alpha}$ is called the \emph{initial monomial} $\mon(f)$ 
of $f$.

\begin{definition}\label{def:diag}. Consider an ideal $I\subset K\llbracket x\rrbracket$. The \emph{initial monomial ideal} 
$\mon(I)$ of $I$ denotes the ideal generated by $\{\mon(f):\ f\in I\}$. The \emph{diagram of initial exponents} $\cN(I)\subset\mathbb{N}^n$ is defined as
\[
\cN(I) := \{\exp(f): f \in I\setminus\{0\}\}.
\]
\end{definition}

Clearly, $\cN(I) + \IN^n = \cN(I)$. For any $\cN \subset\mathbb{N}^n$ such that $\cN=
\cN+\mathbb{N}^n$, there is a smallest set $\cV \subset \cN$ such that $\cN=\cV+\cN$; moreover, $\cV$ is finite. We call $\cV$ the set of \emph{vertices} of $\cN$.

\begin{proposition}\label{prop:diag}
For every $k\in\mathbb{N}$, $H_{I}(k)=H_{\mon(I)}(k)$ is the number of elements $\alpha\in\mathbb{N}^n$ such that $\alpha\notin \cN(I)$ and $|\alpha|\leq k$.
\end{proposition}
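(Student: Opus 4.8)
The plan is to exhibit, for each fixed $k$, an explicit $\IK$-basis of the finite-dimensional quotient $\IK\llbracket x\rrbracket/(I+\fn^{k+1})$ --- namely the images of the monomials $x^\alpha$ with $|\alpha|\le k$ and $\alpha\notin\cN(I)$. Since $H_I(k)=\dim_\IK \IK\llbracket x\rrbracket/(I+\fn^{k+1})$ by definition, this immediately yields the asserted count; and since the same statement applied to the monomial ideal $\mon(I)$ gives $H_{\mon(I)}(k)=\#\{\alpha\notin\cN(\mon(I)):|\alpha|\le k\}$, the equality $H_I=H_{\mon(I)}$ will follow once we note $\cN(\mon(I))=\cN(I)$.

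I would begin with that last identity. By construction $\mon(I)$ is generated by the monomials $x^{\exp(f)}$, $f\in I\setminus\{0\}$, hence is the $\IK$-span of $\{x^\beta:\beta\in\bigcup_{f}(\exp(f)+\IN^n)\}$, and this index set equals $\cN(I)$ because $\cN(I)+\IN^n=\cN(I)$. In particular $\cN(\mon(I))=\cN(I)$, and the monomial case of the basis statement is essentially automatic: modulo $\mon(I)+\fn^{k+1}$ the monomials $x^\alpha$ with $\alpha\in\cN(I)$ or $|\alpha|>k$ vanish, the remaining ones span, and they remain linearly independent since every monomial term of an element of the monomial ideal $\mon(I)+\fn^{k+1}$ has exponent in $\cN(I)$ or degree $\ge k+1$.

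The substantive step is the general ideal $I$, which I would handle by a formal division (standard basis) argument carried out inside the finite-dimensional space $V:=\IK\llbracket x\rrbracket/\fn^{k+1}$, whose monomial basis $\{x^\alpha:|\alpha|\le k\}$ is totally ordered by the ordering on $\IN^n$ induced by lex order of $(|\alpha|,\alpha_1,\dots,\alpha_n)$. Writing $\bar I$ for the image of $I$ in $V$, spanning is proved by reduction: given a representative with an ``on-diagram'' term (exponent in $\cN(I)$), pick the smallest such exponent $\alpha_1$, choose $h\in I$ with $\exp(h)=\alpha_1$, and subtract the scalar multiple of the image of $h$ that kills that term; this leaves all exponents $<\alpha_1$ untouched and introduces only exponents $>\alpha_1$, so the smallest on-diagram exponent strictly increases, and since $\{\alpha:|\alpha|\le k\}$ is finite the process terminates with a representative supported on off-diagram monomials of degree $\le k$. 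For linear independence, suppose such a combination $\sum c_\alpha x^\alpha$ lies in $\bar I$, i.e.\ equals the image of some $g\in I$; then $g=\sum c_\alpha x^\alpha+(\text{terms of degree}\ge k+1)$, so if some $c_\alpha\neq 0$ the initial exponent $\exp(g)$ is the smallest such $\alpha$, which is off-diagram --- contradicting $\exp(g)\in\cN(I)$ for $g\in I\setminus\{0\}$. Hence all $c_\alpha=0$, and we obtain the desired basis; counting its elements gives $H_I(k)=\#\{\alpha\notin\cN(I):|\alpha|\le k\}=H_{\mon(I)}(k)$.

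The one point that needs care is the termination of the reduction, and this is exactly why it is essential to work modulo $\fn^{k+1}$ rather than in $\IK\llbracket x\rrbracket$ itself: passing to $V$ turns a potentially non-terminating power-series division into a finite strictly-increasing descent in a totally ordered finite set. Everything else is bookkeeping with the monomial order; alternatively, one could simply invoke the standard-basis division theorem in formal power series rings, e.g.\ as in \cite{BMjams}.
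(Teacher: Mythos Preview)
Your argument is correct. The paper itself does not give a proof here; it simply cites \cite[Corollary~3.20]{BMinv}. What you have written is a clean, self-contained account of the standard formal-division (standard basis) argument that underlies the cited result: reduce modulo $\fn^{k+1}$ to a finite-dimensional problem, use the monomial order to eliminate on-diagram terms one at a time for spanning, and use the definition of $\cN(I)$ to rule out nontrivial relations for independence. Working in $V=\IK\llbracket x\rrbracket/\fn^{k+1}$ is exactly the right move to avoid convergence and termination issues in the division, and your observation $\cN(\mon(I))=\cN(I)$ is what ties the two Hilbert--Samuel functions together. So your approach is not so much different from the paper's as it is an explicit unpacking of the reference the paper invokes; the benefit is self-containment, at the cost of a paragraph of bookkeeping.
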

\begin{proof}
See \cite[Corollary 3.20]{BMinv}.
\end{proof}

\begin{definition}\label{def:orderSigmapq}
We can use the partial ordering of the set of all Hilbert-Samuel functions to also order the strata 
$\Sigma_{p,q}$ (see Definition \ref{def:sigmapq}). We say that $\Sigma_{p_1,q_1}$ \emph{precedes} $\Sigma_{p_2,q_2}$ if $(\delta(p_1),H_{p_1,q_1})>(\delta(p_2),H_{p_2,q_2})$ in the lexicographic order, where 
\begin{equation*}
\delta(p)=\begin{cases} 3,\text{ if }p\geq 3\\ p\text{ otherwise.}\end{cases} 
\end{equation*}
This order corresponds to the order in which we are going to remove the non-semi-snc from these strata.
\end{definition}

The following two examples illustrate the kind of information we can expect to get from
the Hilbert-Samuel function.

\begin{example}\label{example1}
Let $X: = X_1 \cup X_2$, where $X_1: = (x_1=0)$, $X_2: = (x_2=0)\subset\mathbb{A}_{(x_1,x_2,y,z)}^4$. Note that, if $(X,D)$ is semi-snc, then 
$\Supp D|_{X_1} \cap \Supp D|_{X_2}$ has codimension $2$ in $X$. Consider 
$D:=(x_1=y=0)+(x_2=z=0)$. Then, the origin is not semi-snc. In fact, 
$\Supp D|_{X_1} \cap \Supp D|_{X_2}\,=\,(x_1=x_2=y=z=0)$, which has codimension $3$ in 
$X$. The Hilbert-Samuel function of $\Supp D$ at the origin detects such an anomaly in codimension at a point in a given stratum $\Sigma_{p,q}$ (see Remark \ref{note:HSlemma} and Lemma \ref{lem:strongHSlemma}). In the preceding example, the origin belongs to $\Sigma_{2,1}$ but the Hilbert-Samuel function is not equal to $H_{2,1}$. In fact, the ideal of $\Supp D$ (as
a subvariety of $\mathbb{A}^{4}$) is $(x_1,y)\cap(x_2,z)=(x_1,y)\cdot(x_2,z)$, which has order $2$ while 
$(x_1x_2,y)$, which is the ideal of the support of $D$ at a semi-snc point in $\Sigma_{2,1}$, is of order $1$. The Hilbert-Samuel function determines the order and therefore differs in these two examples.
\end{example}

\begin{example}\label{example2}
This example will show that, nevertheless, the Hilbert-Samuel function together with the number of components of $X$ and $D$ does not suffice to characterize
semi-snc. Consider $X:=(x_1x_2=0)\subset\mathbb{A}_{(x_1,x_2,y,z)}^4$ and $D:=D_1+D_2:=(x_1=y=0)+(x_2=x_1+yz=0)$. Again the origin is  not semi-snc, since the intersection of $D_1$ with $X_2:=(x_2=0)$ and of $D_2$ with $X_1:=(x_1=0)$ are not the same (as they should be at semi-snc points). On the other hand, the Hilbert-Samuel function does not detect the non-semi-snc singularity, since it is the same for the ideals $(x_1,y)\cap(x_2,x_1+yz)$ and $(x_1x_2,y)$. In fact, the Hilbert-Samuel function is determined by the initial monomial ideal of $\Supp D$. 
Since $(x_1,y)\cap(x_2,x_1+yz)=(x_1x_2, x_2y,x_1+yz)$, we compute its initial monomial
ideal as $(x_1,x_2y)$. The latter has the same Hilbert-Samuel function as $(x_1x_2,y)$. This example motivates definition \ref{definitionJ}, which is the final ingredient in our characterization of the semi-snc singularities (Lemma \ref{lemmafactorssnc}).
\end{example}

In Example \ref{example2}, although the intersections of $D_1$ with $X_2$ and of $D_2$ with $X_1$ are not the same, the intersection $D_2\cap X_1$ has the same components as $D_1\cap X_2$ plus some extra components (precisely, plus one extra component $(x_1=x_2=z=0)$). The following lemma shows that this is the worst that can happen when we have the correct value $H_{p,q}$ of the Hilbert-Samuel function in $\Sigma_{p,q}$. 

\begin{lemma}\label{lem:HSlemma}
Assume that $(X,D)$ is locally embedded in a coordinate chart of a smooth variety $Y$ with a system of coordinates $(x_1,\allowbreak\ldots,\allowbreak x_p,\allowbreak y_1,\ldots, y_q,\allowbreak w_1,\ldots, w_{n-p-q})$. Assume $X = (x_1\dotsm x_p = 0)$.
Suppose that $D$ is a reduced divisor \emph{(}so we view it as a subvariety\emph{)}, with no components in the singular locus of $X$, given by an ideal $I_D$
at $a=0$ of the form
\begin{equation}\label{eq:HSlemma}
I_D=(x_1\dotsm x_{p-1},y_1\dotsm y_r)\cap(x_p,f).
\end{equation}
Consider $a\in\Sigma_{p,q}$, where $p\geq2$. \emph{(}In particular $q$ is the minimum of $r$ and the number of irreducible factors of $f|_{(x_p=0)}$\emph{)}. Let $H_D$ denote the Hilbert-Samuel function of $I_D$. 

\emph{Then} $H_D=H_{p,q}$ if and only if we can choose $f$ so that $\ord(f)=q$, $r=q$ and $f\in(x_1\dotsm x_{p-1},y_1\dotsm y_r,x_p)$. Moreover, if either $f\notin(x_1\dotsm x_{p-1},\allowbreak y_1\dotsm y_r,x_p)$, $\ord(f)>q$ or $r>q$ then $H_D\nleq H_{p,q}$ \emph{(}see Definition \ref{definitionorderHSfunction} ff.\emph{)}.
\end{lemma}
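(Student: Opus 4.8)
The plan is to compute the Hilbert-Samuel function of $I_D$ via its initial monomial ideal (Proposition \ref{prop:diag}), using the explicit decomposition \eqref{eq:HSlemma}. First I would observe that $H_D$ is determined by $\mon(I_D)$, so the whole problem becomes combinatorial once we understand $\cN(I_D)$. The baseline case to which everything is compared is $I = (x_1\dotsm x_p, y_1\dotsm y_q)$, whose diagram of initial exponents is the union of the shifted quadrant generated by the exponent of $x_1\dotsm x_p$ and the one generated by $y_1\dotsm y_q$; call its Hilbert-Samuel function $H_{p,q}$. One direction of the equivalence is trivial: if $f$ can be chosen with $\ord(f)=q$, $r=q$, and $f\in(x_1\dotsm x_{p-1}, y_1\dotsm y_q, x_p)$, then I claim $I_D = (x_1\dotsm x_p, y_1\dotsm y_q)$ up to a change of the power series coordinates, or at least has the same initial monomial ideal; this gives $H_D = H_{p,q}$ directly. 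Concretely, writing $f = a\, x_1\dotsm x_{p-1} + b\, y_1\dotsm y_q + c\, x_p$ and intersecting $(x_1\dotsm x_{p-1}, y_1\dotsm y_q)$ with $(x_p, f)$, the cross terms land back in the ideal $(x_1\dotsm x_p, y_1\dotsm y_q)$, and one checks that no new initial monomials of lower order appear.

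For the converse, and for the ``moreover'' clause, the strategy is contrapositive: assume one of the three failure conditions holds and exhibit a monomial forcing $H_D \nleq H_{p,q}$. I would split into the three cases. (a) If $r > q$: then the minimum of $r$ and the number of irreducible factors of $f|_{(x_p=0)}$ is $q < r$, so $f|_{(x_p=0)}$ has exactly $q$ (or fewer) irreducible factors; the element $(x_1\dotsm x_{p-1}) \cdot f \in I_D$ has an initial monomial divisible by $x_1\dotsm x_{p-1}$ times a monomial of degree $\le q$ in the $y$'s and $w$'s, and comparing the resulting diagram against $\cN(I_{p,q})$ one finds a lattice point present in the latter but absent from $\cN(I_D)$ — pushing $H_D$ up at some degree while it is never below $H_{p,q}$, i.e. $H_D \ge H_{p,q}$ with strict inequality somewhere, hence $H_D \nleq H_{p,q}$ unless it is incomparable; either way the conclusion holds. (b) If $\ord(f) > q$ (with $r=q$): the generator $x_p$ of $(x_p,f)$ still contributes $x_1\dotsm x_{p-1}\cdot x_p$ to $I_D$, but $f$ now contributes only higher-order monomials, so the ``$y_1\dotsm y_q$'' part of the baseline diagram is not fully captured at low degrees, again making $H_D$ differ from $H_{p,q}$ in the wrong direction. (c) If $f \notin (x_1\dotsm x_{p-1}, y_1\dotsm y_q, x_p)$ (with $r = q = \ord(f)$): write the order-$q$ part $f_q$ of $f$; the hypothesis says $f_q$, restricted to $x_p = 0$, is not a scalar multiple of $y_1\dotsm y_q$ modulo the relevant ideal, so its initial monomial is some $y^{\beta} w^{\gamma}$ with $|\beta| + |\gamma| = q$ but $\beta \ne (1,\dots,1)$; then $\exp(f)$ or $\exp(x_1\dotsm x_{p-1} f)$ lies strictly ``outside'' the shape cut out by $y_1\dotsm y_q$, so $\cN(I_D)$ misses a point of $\cN(I_{p,q})$, forcing $H_D(k) > H_{p,q}(k)$ for some $k$ while $H_D$ is never strictly below $H_{p,q}$ (since $I_D \subseteq$ the radical-type bound given by $X$ and the component count), hence $H_D \nleq H_{p,q}$.

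The main obstacle I anticipate is case (c): unlike (a) and (b), which are visible from crude degree bookkeeping, case (c) requires a careful analysis of how $\mon((x_p,f))$ interacts with $\mon((x_1\dotsm x_{p-1}, y_1\dotsm y_r))$ under intersection — the initial monomial ideal of an intersection is not the intersection of the initial monomial ideals in general, so I would need to argue directly with the standard-basis / division algorithm for $I_D$, or alternatively reduce to $(x_p = 0)$ and use that $D$ has no component in $\Sing X$ to control $f|_{(x_p=0)}$ (it is a reduced divisor on the smooth chart $(x_p=0)$ with $q$ components through the origin). The cleanest route is probably: pass to the hyperplane $x_p = 0$, where $I_D$ restricts to $(f|_{x_p=0})$ and the diagram question becomes a one-generator computation, then lift back, keeping track of the extra $x_p$-direction contributed by the first factor. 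Throughout, the recurring quantitative lemma I would isolate is: if $\mathcal N \subseteq \mathcal N'$ as diagrams with $\mathcal N \ne \mathcal N'$, then the associated Hilbert-Samuel functions satisfy $H_{\mathcal N} \ge H_{\mathcal N'}$ with strict inequality at some $k$, hence $H_{\mathcal N} \nleq H_{\mathcal N'}$ — this is immediate from Proposition \ref{prop:diag} and handles the ``$\nleq$'' bookkeeping uniformly once the diagram containment is established in each case.
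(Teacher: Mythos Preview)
Your overall strategy---reduce to initial monomial ideals and compare diagrams---matches the paper, but there are two genuine gaps that would block the argument as written.

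First, and most seriously, you never compute $I_D$ in a usable form. You repeatedly try to read off initial monomials directly from the intersection $(x_1\dotsm x_{p-1},y_1\dotsm y_r)\cap(x_p,f)$, and you correctly flag that $\mon$ does not commute with intersection---but you don't resolve this. The paper's key step is to rewrite $I_D$ as a \emph{sum}: setting $G=\bigcap_{(i,j)\in I}(x_i,y_j)$ and $H=\bigcap_{(i,j)\notin I}(x_i,y_j)$ (where $I$ indexes the pairs with $(x_p,f)+(x_i,y_j)$ of codimension $3$), one shows $f$ can be taken in $G$ and then proves the identity
\[
I_D=(x_p)\cdot[G\cap H]+H\cap(f)=(x_p)\cdot(x_1\dotsm x_{p-1},y_1\dotsm y_r)+H\cdot(f).
\]
This gives explicit generators, and the three failure cases become transparent: $f\notin(x_1\dotsm x_{p-1},y_1\dotsm y_r,x_p)$ is exactly $H\neq(1)$, and then every element of $I_D$ of order $\leq q$ has initial monomial divisible by $x_1\dotsm x_p$, forcing $H_D\nleq H_{p,q}$. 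Your proposed ``restrict to $x_p=0$'' route for case (c) does not sidestep the intersection problem---the restriction is still $(x_1\dotsm x_{p-1},y_1\dotsm y_r)\cap(\bar f)$.

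Second, your ``if'' direction is wrong as stated. When $r=q$, $\ord(f)=q$, $f\in(x_1\dotsm x_{p-1},y_1\dotsm y_q)$, the ideal becomes $I_D=(x_1\dotsm x_p,\,x_py_1\dotsm y_q,\,f)$, and its initial monomial ideal is $(x_1\dotsm x_p,\,x_py_1\dotsm y_q,\,\mon(f))$---which is \emph{not} in general equal to $(x_1\dotsm x_p,y_1\dotsm y_q)$, nor related to it by a coordinate change, since $\mon(f)$ can be $x_1\dotsm x_{p-1}\bar y$ or $x_1\dotsm x_{p-1}z$ rather than $y_1\dotsm y_q$. The paper handles this by a case analysis on $\mon(f)$ (splitting $p\leq q$ versus $p>q$) and a direct count showing the two diagrams, though different, have the same number of lattice points in each degree. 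Your one-line claim ``the cross terms land back in $(x_1\dotsm x_p,y_1\dotsm y_q)$'' is false and would need to be replaced by this counting argument.
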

\begin{remark}
It follows immediately from the conclusion of the lemma that $H_{D}\not< H_{p,q}$ at a point in $\Sigma_{p,q}$.
\end{remark}

\begin{proof}[Proof of Lemma \ref{lem:HSlemma}]
First we will give a more precise description of the ideal $I_D$. Let $I\subset\{1,2,\ldots,p-1\}\times\{1,2,\ldots,r\}$ denote the set of all $(i,j)$ such that $(x_p,f)+(x_i,y_j)$ defines a subvariety of codimension 3 in the ambient variety $Y$ (i.e. a subvariety of codimension 2 in $X$). For such $(i,j)$, any element in $(x_p,f)$ belongs to the ideal $(x_p,x_i,y_j)$. Set $G:=\bigcap_{(i,j)\in I}(x_i,y_j)$ and $H:=\bigcap_{(i,j)\notin I}(x_i,y_j)$; note that these are the prime decompositions. Then any element of $(x_p,f)$ belongs to $\bigcap_{(i,j)\in I}(x_p,x_i,y_j)=(x_p)+G$. Therefore we can take $f\in G$. Observe that we still have $f\notin (x_i,y_j)$ for $(i,j)\notin I$.

We claim that
\begin{equation}\label{eq:claim1}
G\cap(x_p,f)=(x_p)\cdot G+(f).
\end{equation}

To prove \eqref{eq:claim1}: The inclusion $G\cap(x_p,f)\supset(x_p)\cdot G+(f)$ is clear since
$f\in G$. To prove the other inclusion, consider $a\in G\cap(x_p,f)$. Write $a=fg_1+x_pg_2$. Then $x_pg_2\in G=\bigcap_{(i,j)\in I}(x_i,y_j)$. Since $x_p\notin (x_i,y_j)$, for every $(i,j)\in G$, we have $g_2\in G$. It follows that $a=x_pg_2+fg_1\in(x_p)\cdot G+(f)$, as required.

We now claim that
\begin{equation}\label{eq:claim2}
H\cap[G\cdot(x_p)+(f)]=(x_p)\cdot[G\cap H]+H\cap(f):
\end{equation}

As in the previous claim, the inclusion $H\cap[G\cdot(x_p)+(f)]\supset(x_p)\cdot[G\cap H]+H\cap(f)$ is clear. To prove the other inclusion, consider $a\in H\cap[G\cdot(x_p)+(f)]$. Then $a=fg_1+x_pg\in H$, where $g\in G$. This implies that $fg_1\in(x_p)+H=\bigcap_{(i,j)\notin I}(x_p,x_i,y_j)$. Consider $(i,j)\notin I$. Assume that $f\in (x_p,x_i,y_j)$. Then there is an irreducible factor $f_0$ of $f$, such that $f_0\in(x_p,x_i,y_j)$. If $f_0=x_ph_1+x_ih_2+y_jh_3$ with $h_3\neq0$, then $(x_p,f)+(x_i,y_j)=(x_p,x_i,y_j)$, which contradicts $(i,j)\notin I$. Now, if $h_3=0$, then $f_0=x_ph_1+x_ih_2\in (x_p,x_i)$, which implies $f\in(x_p,x_i)$, contradicting the assumption that $D$ has no component in the singular locus of $X$. Thus $f\notin(x_p,x_i,y_j)$. Since $(x_p,x_i,y_j)$ is prime, it follows that $g_1\in(x_p)+H$ and $g_1=x_pg_{11}+h$, where $h\in H$. Thus $a=fh+x_p(fg_{11}+g)$ and 
therefore $x_p(fg_{11}+g)\in H$. Since $x_p$ is not in any of the prime factors of $H$, it follows that $fg_{11}+g\in H$. Thus $a\in(x_p)\cdot[G\cap H]+H\cap(f)$.

By \eqref{eq:claim1} and \eqref{eq:claim2},
\begin{align}
I_D&=G\cap H\cap (x_p,f)\notag\\
 &=H\cap[G\cdot(x_p)+(f)]\label{HSequation}\\
 &=(x_p)\cdot[H\cap G]+H\cap(f).\notag
\end{align}

We are allowed to pass to the completion of the local ring of $Y$ at $a$ with respect to its maximal ideal. So we can assume we are working in a formal power series ring where $(x_1,\ldots,x_p,y_1,\ldots,y_{n-p})$ are the indeterminates. We can pass to the completion because this doesn't change the Hilbert-Samuel function, the order of $f$ or ideal membership. For simplicity, we
use the same notation for ideals and their generators before and after completion. 

We can compute the Hilbert-Samuel function $H_D$ using the diagram of initial exponents of our ideal $I_D$. This diagram should be compared to the diagram of the ideal $(x_1\dotsm x_p,y_1\dotsm y_q)$, which has exactly two vertices, in degrees $p$ and $q$.

All elements of $H\cap(f)=H\cdot(f)$ have order strictly greater than $\ord(f)$ (which is $\geq q$), unless $H=(1)$ and $\ord(f)=q$. Moreover, all elements of 
\[
(x_p)\cdot[G\cap H]=(x_1\dotsm x_p,\ x_py_1\dotsm y_r)
\]
of order less than $q+1$ have initial monomial divisible by $x_1x_2\dotsm x_p$. 

It follows that, if $f\notin(x_1\dotsm x_{p-1},y_1\dotsm y_r)$ i.e. if $H\neq(1)$, then $H_D\nleq H_{p,q}$. To see this, first assume that $p\geq q+1$. Then all elements of the ideal $I_D=(x_p)\cap[H\cap G]+H\cap(f)$ have order $\geq q+1$,
but $(x_1\dotsm x_p,y_1\dotsm y_q)$ contains an element of order $q$. Therefore $H_D\nleq H_{p,q}$ (obvious from the diagram of initial exponents). Now suppose that $p<q+1$. All elements of $(x_p)\cap[H\cap G]$ of order less than $q+1$ have initial monomials divisible by $x_1\dotsm x_p$, while $y_1\dotsm y_q \in (x_1\dotsm x_p,y_1\dotsm y_q)$ has order $q<q+1$ but its initial monomial is not divisible by $x_1\dotsm x_p$. Therefore we again get $H_D\nleq H_{p,q}$. 

Assume that $\ord(f)>q$. We have just seen that every element of $(x_p)\cap[H\cap G]$ of order $<q+1$ has initial monomial divisible by $x_1\dotsm x_{p-1}$. Therefore every element of $I_D=(x_p)\cap[H\cap G]+H\cdot (f)$ of order $<q+1$ has initial monomial divisible by $x_1\dotsm x_{p-1}$. But, in $(x_1\dotsm x_{p}, y_1\dotsm y_q)$, the element $y_1\dotsm y_q$ has order $q<q+1$ but is not divisible by $x_1\dotsm x_{p-1}$. Therefore $H_D\nleq H_{p,q}$.

If $f\in(x_1\dotsm x_{p-1},y_1\dotsm y_r)$, $\ord(f)=q$ but $r>q$, then the initial monomial of $f$ is divisible by $x_1\dotsm x_{p-1}$. A simple computation shows that the ideal of initial monomials of $I_D$ is 
\[
 (x_1\dotsm x_{p},\allowbreak x_py_1\dotsm y_{r}, \text{mon}(f)).
\]
This follows from the fact that canceling the initial monomial of $f$ using $x_1\dotsm x_p$ or $x_py_1\dotsm y_q$ leads to a function whose initial monomial is already in $(x_1\dotsm x_p,x_py_1\dotsm y_q)$. For convenience, write $a:=x_1\dotsm x_p$, $b:=\mon(f)$ and $c:=x_py_1\dotsm y_q$. From the diagram it follows that $H_{D}\nleq H_{p,q}$ because the monomials that are multiples of both $a$ and of $b$ are not only those that are multiples of $ab$ and therefore $H_{D}(q+1)>H_{p,q}(q+1)$.

It remains to show that if $f\in(x_1\dotsm x_{p-1},y_1\dotsm y_r)$ (i.e., $H=(1)$), $r=q$ and that $\ord(f)=q$ then $H_D=H_{p,q}$. Assume that $H=(1)$ and that $\ord(f)=q$. The first assumption implies that
\begin{equation}\label{eq:ideal}
I_D=(x_1\dotsm x_p,x_py_1\dotsm y_q,f).
\end{equation}

We consider two cases: (1) $p\leq q$. Since $H=(1)$, $f\in G=I_D$.
Therefore, we have one of the following options for the initial monomial of $f$.
\begin{equation}\label{eq:cases}
\mon(f)=\begin{cases}y_1y_2\dotsm y_q&\\x_1\dotsm x_{p-1}\overline{y}\\ x_1\dotsm x_{p-1}\overline{y}z\\x1\dotsm x_{p-1}z,\end{cases}
\end{equation}
where $\overline{y}$ is a product of some of the $y_j$ and $z$ is a product of some of
the remaining coordinates (possibly including some of the $x_i$). (In every case, the degree of the monomial is $q$.)

In each case in \eqref{eq:cases} the ideal of initial monomials of $I_D$ is $(x_1\dotsm x_{p},\allowbreak x_py_1\dotsm y_q,\mon(f))$. 

We want to prove now that, in all cases in \eqref{eq:cases}, $H_{\text{mon}(I_D)}=H_{p,q}$. For convenience, write $a:=x_1\dotsm x_p$, $b:=\mon(f)$ and $c:=x_py_1\dotsm y_q$. In the first case
of \eqref{eq:cases}, the equality is precisely the definition of $H_{p,q}$. Note that, in the remaining cases, the Hilbert-Samuel function of the ideal $(a,b)$ is larger than $H_{p,q}$ because the monomials that are multiples of both $a$ and of $b$ are not only those that are multiples of $ab$. For example, in the second case (i.e., $\mon(f)=x_1\dotsm x_{p-1}\overline{y}$), such monomials are those of the form $a\overline{y}m=bx_pm$ where $m\notin(x_1\dotsm x_{p-1})$. When $\deg(m)=d$, these terms have degree $q+d+1$, but the monomial $x_py_1\dotsm y_qm \in \mon(I_D)$ (of the same degree) does not belong to the ideal $(a,b)$. This implies that the diagrams of initial exponents of the ideals $I_D$ and $(x_1\dotsm x_p,y_1\dotsm y_q)$ have the same number of points in each degree. Therefore $H_D=H_{\text{mon}(I_D)}=H_{p,q}$.

Case (2) $q<p$. Then (from \eqref{eq:ideal}),
the options for the initial monomial of $f$ are:
\[
\mon(f)=\begin{cases}y_1\dotsm y_q,& q < p-1,\\x_1\dotsm x_{p-1},& q=p-1.\end{cases}
\]
In each of these cases, we can compute the initial monomial ideal of $I_D$. In the first case,
$\mon(I_D)=(x_1\dotsm x_p,y_1\dotsm y_q)$. In the second case, $\mon(I_D)=(x_py_1\dotsm y_q,x_1\dotsm x_{p-1})$. In both cases, $H_D=H_{p,q}$. This completes the proof of the lemma.
\end{proof}

\begin{corollary}\label{cor:HSlemma}
In the settings of Lemma \ref{lem:HSlemma}, if there are $p'$, $q'$ such that $H_{p',q'}\geq H_{D}$ at $a\in \Sigma_{p,q}$ then $H_{p',q'}\geq H_{p,q}$. 
\end{corollary}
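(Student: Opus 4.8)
The plan is to reduce the corollary to the single sharpened assertion that, at $a \in \Sigma_{p,q}$ with $p \ge 2$ and $I_D$ as in Lemma \ref{lem:HSlemma}, one always has $H_D \ge H_{p,q}$ (with equality precisely in the ``good'' case of the lemma, where $f$ can be chosen with $\ord f = q$, $r = q$ and $f \in (x_1\cdots x_{p-1}, y_1\cdots y_r, x_p)$). Granting this, the corollary is immediate: $H_{p',q'} \ge H_D \ge H_{p,q}$. Logically, the only nontrivial point is that $H_D$ and $H_{p,q}$ are always comparable: the Remark following Lemma \ref{lem:HSlemma} already gives $H_D \not< H_{p,q}$, which together with $H_{p',q'} \ge H_D$ rules out $H_{p',q'} < H_{p,q}$, so the whole issue is to exclude the possibility that $H_D$ (hence possibly $H_{p',q'}$) is incomparable with $H_{p,q}$.

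So the work is to upgrade the conclusion ``$H_D \nleq H_{p,q}$'' of the failure cases of Lemma \ref{lem:HSlemma} to ``$H_D \ge H_{p,q}$''. I would do this by revisiting the proof of Lemma \ref{lem:HSlemma} and making the comparison of diagrams of initial exponents quantitative in every degree, via Proposition \ref{prop:diag}. In all cases one has $(x_1\cdots x_p,\, x_p y_1\cdots y_r) \subseteq \mon(I_D)$, and since $r \ge q$ the monomial $x_p y_1\cdots y_r$ is a multiple of $y_1\cdots y_q$; hence $\cN\bigl((x_1\cdots x_p,\, x_p y_1\cdots y_r)\bigr)$ is a common ``core'' of the two diagrams $\cN(I_D)$ and $\cN\bigl((x_1\cdots x_p, y_1\cdots y_q)\bigr)$, and it sits inside the latter. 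The remaining exponents of $\cN(I_D)$ come from $\mon(f)$ (when $\ord f = q$, a monomial of degree $q$ divisible by $x_1\cdots x_{p-1}$) or from $\mon\bigl(H\cdot(f)\bigr)$ (when $H \ne (1)$ or $\ord f > q$, monomials of order $\ge q+1$), while the exponents of $\cN\bigl((x_1\cdots x_p, y_1\cdots y_q)\bigr)$ outside the common core are the multiples of $y_1\cdots y_q$. Running the same count as in the proof of Lemma \ref{lem:HSlemma} --- ``the monomials that are multiples of both are not only the multiples of the product'' --- one checks that in every degree $\cN(I_D)$ contains no more lattice points than $\cN\bigl((x_1\cdots x_p, y_1\cdots y_q)\bigr)$, i.e. $H_D(k) \ge H_{p,q}(k)$ for every $k$; the inequality is strict in some degree by the lemma itself.

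The main obstacle is exactly this all-degrees bookkeeping. Lemma \ref{lem:HSlemma} only needs, and only records, one degree at which $H_D$ strictly exceeds $H_{p,q}$, so its proof stops there; pushing the inequality to every degree forces one to control $\cN(I_D)$ globally, which means tracking the monomial ideals $G = \bigcap_{(i,j)\in I}(x_i,y_j)$ and $H = \bigcap_{(i,j)\notin I}(x_i,y_j)$ from that proof (and the interaction of $\mon(f)$ with them) somewhat more carefully than in the lemma. Everything else --- the reduction above and the equality case --- is formal.
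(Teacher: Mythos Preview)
Your approach is different from the paper's and aims at a stronger intermediate statement: you want to show $H_D \ge H_{p,q}$ outright, and then chain $H_{p',q'}\ge H_D\ge H_{p,q}$. The paper never proves (or claims) $H_D\ge H_{p,q}$. Instead it uses the hypothesis $H_{p',q'}\ge H_D$ directly to constrain $p',q'$: from the description $I_D=(x_1\cdots x_p,\,x_py_1\cdots y_r)+(f)\cap H$ with $r\ge q$ and $\ord f\ge q$, the paper reads off two coarse invariants of $I_D$ --- the order, and the fact that every element of $I_D$ of order $<q+1$ has initial monomial divisible by $x_1\cdots x_p$ (when $p\le q$) --- and argues that these alone force $\min(p',q')\ge\min(p,q)$ and $\max(p',q')\ge q$, hence $H_{p',q'}\ge H_{p,q}$. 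This sidesteps the all-degrees diagram comparison entirely.

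What each approach buys: the paper's argument is short and uses only the two low-degree features of $\cN(I_D)$ already isolated in the proof of Lemma~\ref{lem:HSlemma}; it never needs to control $\cN(I_D)$ globally. Your route would yield the sharper fact $H_D\ge H_{p,q}$ (useful in its own right), but the price is exactly the ``all-degrees bookkeeping'' you flag as the main obstacle: the equality-case analysis in Lemma~\ref{lem:HSlemma} only produces a single degree where $H_D(k)>H_{p,q}(k)$, and extending this to every $k$ requires tracking the full monomial ideals $G$, $H$ and $\mon(f)$ through the inclusion--exclusion count, case by case. You have correctly identified where the work lies, but the sketch you give (``the monomials that are multiples of both are not only the multiples of the product'') is the mechanism for strict inequality in one degree, not an argument for inequality in all degrees; carrying this out would need a genuine case analysis beyond what is in the lemma.
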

\begin{proof}
Without loss of generality we can assume that $p'\leq q'$. As in the proof of Lemma \ref{lem:HSlemma} we pass to the completion of the local ring at $a$ in $Y$. We also have that 
\[
 I_{D}=(x_1\dotsm x_p, x_py_1\dotsm y_r)+(f)\cap H.
\]
Recall that $r\geq q$ and $\ord(f)\geq q$. If $p>q$ then $\ord(I_D)\geq q$. Since $H_{p',q'}\geq H_D$ we must have $p',q'\geq q$ and then $H_{p',q'}\geq H_{p,q}$. If $p\leq q$ then $\ord(I_D)=p$. Since $H_{p',q'}\geq H_{D}$ we must have $\min(p',q')=p'\geq p=\min(p,q)$. Any element of $I_{D}$ of order $<q+1$ has initial monomial divisible by $x_1\dotsm x_p$, therefore the inequality $H_{p',q'}\geq H_{D}$ is not possible if $q'<q$. Hence $p'\geq p$ and $q'\geq q$, i.e. $H_{p',q'}\geq H_{p,q}$.
\end{proof}

\begin{remark}\label{note:HSlemma}
 Lemma \ref{lem:HSlemma} is at the core of our proof of Theorem \ref{thm:main}. The lemma describes the ideal of the support of $D$ at a point $a\in\Sigma_{p,q}$, under the following assumptions:
\begin{enumerate}
 \item $X$ is snc at $a$ and, after removing its last component, the resulting pair (with $D$) is semi-snc at $a$.
\item No component of $D$ at $a$ lies in the singular locus of $X$.
\item $H_{\Supp D,a}=H_{p,q}$.
\end{enumerate}
Under these assumptions we see that 
\[
(x_p=0)\cap(x_1\dotsm x_{p-1}=y_1\dotsm y_q=0)\subset(x_1\dotsm x_{p-1}=0)\cap(x_p=f=0)
\] 
(as in Example \ref{example2}), and also
\begin{equation*}
\begin{split}
 (x_1\dotsm x_{p-1}=y_1\dotsm y_q=0)\cap(x_p=f=0)\\=(x_p=x_1\dotsm x_{p-1}=y_1\dotsm y_q=0);
\end{split}
\end{equation*}
i.e., the intersection of $D^{p-1}:=(x_1\dotsm x_{p-1}=y_1\dotsm y_q=0)$ and $D_p:=(x_p=f=0)$ has only components of codimension $2$ in $X$.
\end{remark}
The previous statement is in fact true without the assumption $(1)$ of 
Remark \ref{note:HSlemma}. This stronger version will likely be useful 
to prove Theorem \ref{thm:main} without using an ordering of the components of $X$. The stronger lemma can be stated as follows.

\begin{lemma}\label{lem:strongHSlemma}
 Assume that $X$ is snc and no component of $D$ lies in the singular locus of $X$. Let $X_i$, $i=1,\ldots,n$, be the irreducible components of $X$ at $a$, and let $D_i$ be the divisorial part of $D|_{X_i}$. If $a\in X$ belongs to the stratum $\Sigma_{p,q}$ and $H_{\Supp D,a}=H_{p,q}$, then, for every $i,j$, the irreducible components of the intersection $D_i\cap D_j$ are all of codimension $2$ in $X$.
\end{lemma}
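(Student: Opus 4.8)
\textbf{Proof proposal for Lemma \ref{lem:strongHSlemma}.}

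The plan is to reduce the general statement to a two-component analysis and then invoke Lemma \ref{lem:HSlemma}. Fix the point $a$ and work in the completed local ring of $Y$ at $a$, so that $X = (x_1\dotsm x_p = 0)$ in coordinates $(x_1,\ldots,x_p,y_1,\ldots,y_{n-p})$, and $a \in \Sigma_{p,q}$. Suppose, for contradiction, that some intersection $D_i \cap D_j$ has a component $Z$ of codimension $\geq 3$ in $X$. Since $\Supp D$ is a closed subset of $Y$ and the Hilbert-Samuel function is computed from the ideal $I_{\Supp D}$, the strategy is to show that the presence of such a $Z$ forces $H_{\Supp D, a} \nleq H_{p,q}$, contradicting the hypothesis $H_{\Supp D,a} = H_{p,q}$.

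The key reduction is as follows. Among the $p$ components $X_1, \ldots, X_p$ of $X$, choose an ordering in which $X_i$ and $X_j$ (the two hosting the bad intersection) are the last two. Let $X^{p-1} := X_1 \cup \dots \cup X_{p-1}$ be the union of all but the last, and let $D^{p-1}$, $D_p$ be the corresponding pieces of $D$ as in Definition \ref{def:pairs}. Now I would split into two cases according to whether $p = 2$ or $p \geq 3$. When $p \geq 3$, I expect the argument to go by peeling: if the bad component $Z$ already appears as a high-codimension component of $D_i \cap D_j$ with both $X_i, X_j \subset X^{p-1}$, one restricts attention to $X^{p-1}$; but restricting to a union of fewer hyperplanes only makes the order/Hilbert-Samuel comparison worse, so it suffices to treat the case where $X_j = X_p$ is the last component. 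In that case the ideal $I_{\Supp D}$ decomposes as $I_{D^{p-1}} \cap I_{D_p}$, which is exactly the shape $(x_1\dotsm x_{p-1}, y_1\dotsm y_r) \cap (x_p, f)$ appearing in \eqref{eq:HSlemma} of Lemma \ref{lem:HSlemma} — \emph{except} that a priori we don't know $D^{p-1}$ is semi-snc, i.e. we don't get the monomial form $(x_1\dotsm x_{p-1}, y_1\dotsm y_r)$ for free. This is the gap between Lemma \ref{lem:HSlemma} and the present statement, and bridging it is the main obstacle.

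To bridge it, I would run the computation of the diagram of initial exponents directly, without assuming the clean monomial form of $D^{p-1}$. The crucial observation from the proof of Lemma \ref{lem:HSlemma} that survives is purely about orders: any element of $I_{D^{p-1}} \cap (x_p)$ has initial monomial divisible by $x_p$ and by a product of $q$ of the remaining generators, hence has order $\geq q+1$ or has initial monomial divisible by $x_1\dotsm x_p$; and the bad component $Z$ of $D_i\cap D_j$, being of codimension $\geq 3$ in $X$, forces the product decomposition $I_{D_i} \cdot I_{D_j}$ (or the relevant intersection) to have order $\geq 2$ in directions not accounted for by the monomial $(x_1\dotsm x_p, y_1\dotsm y_q)$, exactly as in Example \ref{example1} and in Remark \ref{note:HSlemma}. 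Concretely, the presence of $Z$ means there is a coordinate direction $w$ (transverse to all the $x$'s and to $y_1,\ldots,y_q$) such that $w$ divides $f$ modulo $(x_p, x_i)$ for the relevant $i$, forcing an element of $I_{\Supp D}$ of order $q$ whose initial monomial is \emph{not} of the form dictated by $H_{p,q}$, or — if no such low-order element exists — forcing $\ord(I_{\Supp D}) \geq q+1$ while $H_{p,q}$ records an element of order $q$ (namely $y_1\dotsm y_q$, using $p \geq 2$). Either way the diagram of initial exponents of $I_{\Supp D}$ fails to match that of $(x_1\dotsm x_p, y_1 \dotsm y_q)$ in some degree $\leq q+1$, giving $H_{\Supp D,a} \neq H_{p,q}$ (indeed $\nleq$), the desired contradiction. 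The case $p=2$ is the base case and is already essentially contained in the discussion of Example \ref{example1}: $I_{D_1} \cap I_{D_2}$ has a component of codimension $3$ exactly when the two codimension-one-in-$X$ pieces meet too deeply, and then the order jumps. I expect the bookkeeping of initial monomials in the non-monomial $D^{p-1}$ case to be the part requiring the most care, since one must argue that no cancellation against $x_1\dotsm x_p$ and $x_p y_1\dotsm y_q$ can repair the diagram — but this is exactly the type of argument already carried out in the proof of Lemma \ref{lem:HSlemma}, so it should go through with the order estimates above doing the real work.
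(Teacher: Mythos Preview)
The paper does not prove Lemma \ref{lem:strongHSlemma}. Immediately after stating it, the authors write: ``We plan to publish a proof of this lemma in a future paper.'' So there is no argument in the paper against which to compare your proposal.

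As for your sketch on its own merits: you correctly identify the main obstacle --- without the assumption that $(X^{p-1},D^{p-1})$ is semi-snc, the ideal $I_{D^{p-1}}$ need not have the monomial form $(x_1\dotsm x_{p-1},\,y_1\dotsm y_r)$, and the entire computation in the proof of Lemma \ref{lem:HSlemma} rests on that explicit form. Your plan to ``run the computation of the diagram of initial exponents directly'' is the right instinct, but what follows is not yet a proof. The claim that the bad component $Z$ forces a coordinate direction $w$ transverse to all the $x_i$ and $y_j$ is unjustified (nothing in the hypotheses prevents $Z$ from lying inside the existing coordinate hyperplanes), and the assertion that ``no cancellation against $x_1\dotsm x_p$ and $x_py_1\dotsm y_q$ can repair the diagram'' is exactly the hard point --- in Lemma \ref{lem:HSlemma} that step works because the generators are explicit monomials, which you no longer have. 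The fact that the authors deferred this lemma to a separate paper suggests it is not a routine extension of the arguments already present.
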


We plan to publish a proof of this lemma in a future paper.\medskip
%%%%%%%%%%%%%%%%%%%%%%%%%%%%%%%%%%%%%%%%%%%%%%%%%%%%%%%%%%%%%%%%%%%%%%%%%%%%%%%%%%%%%%%%%%%%%%%%%%%%%%%%%%%%%%%%%%%%%%%%%%%%%%%%%%%%%%%%%%%%%%%%%%%%%%%%%%%%%%%%%%%%%%%%%%%%%%%%%%%%%%%%%%%%%%%%%%%%%%%%%%%%%%%%%%%%%%%%%%%%%%%%%%%%%%%%%%%%%%%%%%%%%%%%%%%%%%%%%%%%%%%%%%%%%%%%

\begin{proof}[\bf{Proof of Proposition \ref{lemmafactorssnc}}]
The assertion is trivial at a point in $X\setminus X_{m}$, so we assume that $a\in X_m$. 

At a semi-snc point $a$ of the pair $(X,D)$ the conditions are clearly satisfied. In fact, the ideal of $D$ is of the form $(x_1\dotsm x_p,y_1\dotsm y_q)$ in a system of coordinates for $Y$ at $a=0$ (recall that $D$ is reduced). We can then compute
 \[
 J_{a}=[(x_p,\,x_1\dotsm x_{p-1},\,y_1\dotsm y_q):(x_p,\,x_1\dotsm x_{p-1},\,y_1\dotsm y_q)]=\mathcal{O}_{Y,a}.
 \]
 
 Assume the conditions (1)--(3). By (1), there is system of coordinates $(x_1,\ldots, x_p,\, y_1, \ldots, y_q,\, z_1, \ldots, z_{n-p-q})$ for $Y$ at $a$, in which $X_m=(x_p=0)$ and $D$ is of the form
\[
D=(x_1\dotsm x_{p-1}=y_1\dotsm y_q=0)+(x_p=f=0).
\]
By Condition (2) and Lemma \ref{lem:HSlemma}, we can choose $f\in(x_1\dotsm x_{p-1},\,\allowbreak y_1\dotsm y_q,\, x_p)$ and, therefore, we can choose $f\in(x_1\dotsm x_{p-1},\,y_1\dotsm y_q)$. Write $f$ in the form $f=x_1\dotsm x_{p-1}g_1+y_1\dotsm y_q g_2$. Then
\begin{equation}\label{eq:Jg2unit}
\begin{aligned}
 J_{a}&=[(x_p,\,x_1\dotsm x_{p-1},\,f):(x_p,\,x_1\dotsm x_{p-1},\,y_1\dotsm y_q)]\\
   &=[(x_p,\,x_1\dotsm x_{p-1},\,y_1\dotsm y_q g_2):(x_p,\,x_1\dotsm x_{p-1},\,y_1\dotsm y_q)]\\
   &=(x_p,\,x_1\dotsm x_{p-1},\,g_2).
 \end{aligned}
\end{equation}
The condition $J_{a}=\mathcal{O}_{Y,a}$ means that $g_2$ is a unit. Then
\begin{align*}
D&=(x_1\dotsm x_{p-1}=y_1\dotsm y_q=0)+(x_p=f=0)\\
 &=(x_1\dotsm x_{p-1}=y_1\dotsm y_qg_2=0)+(x_p=f=0)\\
 &=(x_1\dotsm x_{p-1}=x_1\dotsm x_{p-1}g_1+y_1\dotsm y_qg_2=0)+(x_p=f=0)\\
 &=(x_1\dotsm x_p=f=0).
\end{align*}

By Lemma \ref{lem:HSlemma}, since $a\in\Sigma_{p,q}$, $\ord(f)=q$. It follows that 
$f|_{(x_p=0)}$ is a product $f_1\dotsm f_q$ of $q$ irreducible factors each of order one. For each $f_i$ set $I_i:=\{(j,k): f_i\in(x_j,y_k)|_{x_p=0},\ j\leq p-1,\ k\leq q\}$ then $f_i\in\cap_{(j,k)\in I_i}(x_j,y_k)|_{(x_p=0)}$, where the intersection is understood to be the whole local ring if $I_i$ is empty. Note that $\cup_i I_i=\{(j,k):\ j\leq p-1,\ k\leq q\}$,
since $f\in(x_1\dotsm x_{p-1},y_1\dotsm y_q)$. 

We will extend each $f_i$ to a regular function on $Y$ (still denoted $f_i$) preserving this condition, i.e. such that $f_i\in\cap_{(j,k)\in I_i}(x_j,y_k)$. In fact, $\cap_{(j,k)\in I_i}(x_j,y_k)|_{(x_p=0)}$ is generated by a finite set of monomials $\{m_r\}$ in the
$x_j|_{(x_p=0)}$ and $y_k|_{(x_p=0)}$. Then $f_i$ is a combination, $\sum m_ra_r$, of these monomials. So we can get an extension of $f_i$ as desired, using arbitrary 
extensions of the $a_r$ to regular functions on $Y$. This means we can assume that $f=f_1\dotsm f_q\in(x_1\dotsm x_{p-1},\,y_1\dotsm y_q)$ (using the extended $f_i$).

Since $f|_{(x_1=\dotsm=x_p=0)}=y_1\ldots y_q g_2$ where $g_2$ is a unit, it follows that
$f=y_1\ldots y_q g_2$ mod $(x_1,\ldots,x_p)$, where $g_2$ is a unit. Because $D=(x_1\dotsm x_{p},f)$, it remains only to check that $x_1,\ldots,x_p,f_1,\ldots,f_q$ are part of a system of coordinates. We can pass to the completion of the ring with respect to its maximal ideal, which we can identify with a ring of formal power series in variables 
including $x_1,\ldots,x_p,\,y_1,\ldots,y_q$. It is enough to prove that the images of the
$f_i$ and $x_i$ in $\hat{m}/\hat{m}^2$ are linearly independent, where $\hat{m}$ is the maximal ideal of the completion of the local ring $\mathcal{O}_{X,a}$. If we put $x_1=\dotsm=x_p=0$ in the power series representing each $f_i$ we get
\[
(f_1\dotsm f_q)|_{(x_1=\ldots=x_p)}=y_1\dotsm y_q.
\]
This means that, after a reordering the $f_i$, each $f_i|_{(x_1=\ldots=x_p)}\in(y_i)$, and the desired conclusion follows.
 \end{proof}
%%%%%%%%%%%%%%%%%%%%%%%%%%%%%%%%%%%%%%%%%%%%%%%%%%%%%%%%%%%%%%%%%%%%%%%%%%%%%%%%%%%%%%%%%%%%%%%%%%%%%%%%%%%%%%%%%%%%%%%%%%%%%%%%%%%%%%%%%%%%%%%%%%%%%%%%%%%%%%%%%%%%%%%%%%%%%%%%%%%%%%%%%%%%%%%%%%%%%%%%%%%%%%%%%%%%%%%%%%%%%%%%%%%%%%%%%%%%%%%%%%%%%%%%%%%%%%%%%%%%%%%%%%%%%%%%

\section{Algorithm for the main theorem}\label{sec:maintheorem}
In this section we prove Theorem \ref{thm:main}. We divide the proof into several steps or subroutines each of which specify certain blowings-up.\medskip

\noindent
\textbf{Step 1:} \emph{Make $X$ snc.} This can be done simply by applying Theorem \ref{cor:corollarytheoremB} to $(X,0)$. The blowings-up involved preserve snc singularities of $X$ and therefore also preserve the semi-snc singularities of $(X,D)$. After Step $1$
we can  therefore assume that $X$ is everywhere snc.\medskip

\noindent
\textbf{Step 2:} \emph{Remove components of $D$ lying inside the singular locus of $X$.} Consider the union $Z$ of the supports of the components of $D$ lying in the singular locus of $X$. Blowings-up as needed can simply be given by the usual desingularization of $Z$, followed by blowing up the final strict transform.

The point is that, locally, there is a smooth ambient variety, with coordinates $(x_1,\ldots,x_p,\ldots,x_n)$ in which each component of $Z$ is of the form $(x_i=x_j=0)$, $i<j\leq p$. Let $C$ denote the set of irreducible components of intersections of arbitrary subsets of components of $Z$. Elements of $C$ are partially ordered by inclusion. Desingularization of $Z$ involves blowing up elements of $C$ starting with the smallest, until all components of $Z$ are separated. Then blowing up the final (smooth) strict transform removes all components of $Z$.

After Step 2 we can therefore assume that no component of $D$ lies in the 
singular locus of $X$.\medskip

\noindent
\textbf{Step 3:} \emph{Make $(X,D_\text{red})$ semi-snc.} (I.e., transform $(X,D)$ by the blowings-up needed to make $(X,D_\text{red})$ semi-snc.) The algorithm for Step $3$ is given following Step $4$ below. 

We can now therefore assume that $X$ is snc, $D$ has no components in the singular locus of $X$ and $(X,D_\text{red})$ is semi-snc.
\medskip

\noindent
\textbf{Step 4:} \emph{Make $(X,D)$ semi-snc.} A simple combinatorial argument for Step $4$ will be given in Section \ref{sec:fixingmultiplicities}. This finishes the algorithm.\medskip

\noindent
\textbf{Algorithm for Step 3:} The input is $(X,D)$, where $X$ is snc, $D$ is reduced and no component of $D$ lies in the singular locus of $X$. We will argue by induction on the number of components of $X$. It will be convenient to formulate the inductive assumption in terms of triples rather than pairs.

\begin{definition}\label{def:triples}
Consider a triple $(X,D,E)$, where $X$ is an algebraic variety, and $D$, $E$ are Weil divisors on $X$. Let $X_1$, \ldots, $X_m$ denote the irreducible components of $X$ with a given ordering. We use the notation of Definition \ref{def:pairs}.
Define
\begin{align*} 
E^i&:=E|_{X^i}+(X-X^i)|_{X^i},\\
(X,D,E)^i&:=(X^i, D^i,E^i),
\end{align*}
where $(X-X^i)|_{X^i}$ is viewed as a divisor on $X^i$.
\end{definition}

Recall Definitions \ref{def:transf},  \ref{def:triplessnc} and Remark \ref{rem:transf}.

\begin{definition}\label{sigmatriple}
Given $(X,D,E)$, we write 
$\Sigma_{p,q}=\Sigma_{p,q}(X,D,E)$ to denote $\Sigma_{p,q}(X,D)$ (so the strata 
$\Sigma_{p,q}$ depend on $X$ and $D$ but not on $E$). See Definition \ref{def:sigmapq}.\end{definition}
 
\begin{theorem}\label{thm:fortriples}
Assume that $X$ is snc, $D$ is a reduced Weil divisor on $X$ with no component in the singular locus of $X$, and $E$ is a Weil divisor on $X$ such that $(X,E)$ is semi-snc. Then there is a composite of blowings-up with smooth centers $f:X'\rightarrow X$, such that:
\begin{enumerate}
\item Each blowing-up is an isomorphism over the semi-snc points of its target triple.
\item The transform $(X',D',\tE)$ of the $(X,D,E)$ by $f$ is semi-snc.
\end{enumerate}
\end{theorem}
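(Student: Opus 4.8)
The plan is to prove Theorem \ref{thm:fortriples} by induction on the number $m$ of irreducible components of $X$, following the outline of \S\ref{subsec:structure}. For the base case $m=1$, $X$ is smooth, so ``semi-snc'' means ``snc'' (Remark \ref{rem:main}(2)) and the triple $(X,D,E)$ is semi-snc exactly where $(X,D+E)$ is snc. Applying the strong form of snc-strict log resolution (Theorem \ref{thm:theoremB}; cf.\ Remark \ref{rem:main}(4) and \cite{BDV}) to the reduced divisor $\Supp(D+E)=\Supp D\cup\Supp E$ on $X$, and recording each exceptional divisor as a new component of $E$ via $\widetilde E$, produces a composite of blowings-up whose total transform of $D+E$ is snc; since every blowing-up there is an isomorphism over the snc locus of the preceding transform, condition (1) holds.

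For the inductive step, $m\ge 2$, the first phase is to resolve the restricted triple $(X,D,E)^{m-1}=(X^{m-1},D^{m-1},E^{m-1})$ of Definition \ref{def:triples}. One checks that it satisfies the hypotheses of the theorem: $X^{m-1}$ is snc and $D^{m-1}$ is reduced with no component in $\Sing X^{m-1}\subset\Sing X$; and $(X^{m-1},E^{m-1})$ is semi-snc because, near a point of $X_m\cap X^{m-1}$ where $X=(x_1\cdots x_p=0)$ with $X_m=(x_p=0)$, the extra term $X_m|_{X^{m-1}}$ is cut out by the coordinate hyperplane $(x_p=0)$, transverse to $X^{m-1}=(x_1\cdots x_{p-1}=0)$. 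By the induction hypothesis there is a composite of blowings-up resolving $(X^{m-1},D^{m-1},E^{m-1})$, each an isomorphism over its semi-snc locus; its centers are smooth subvarieties of $X^{m-1}\subset X$ of codimension $\ge 2$ in $X$, and since the algorithm keeps its centers in simple normal crossings with $E^{m-1}$ (which contains $X_m|_{X^{m-1}}$) they remain admissible when we blow up $X$ itself. Moreover a semi-snc point of $(X,D,E)$ is a semi-snc point of $(X^{m-1},D^{m-1},E^{m-1})$ --- this is Proposition \ref{lemmafactorssnc}(1) together with the analogous elementary statement for $E$ --- so the lifted blowings-up are isomorphisms over the semi-snc locus of $(X,D,E)$. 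After this phase we may assume $(X^{m-1},D^{m-1},E^{m-1})$ is semi-snc (re-running Step 2 if necessary to restore the condition that $D$ has no component in $\Sing X$, which does not disturb semi-snc points).

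In the second phase we remove the remaining non-semi-snc points, all of which now lie in $X_m$, since off $X_m$ the triple agrees locally with its (now semi-snc) restriction. We stratify $X_m$ by the strata $\Sigma_{p,q}=\Sigma_{p,q}(X,D)$ of Definition \ref{def:sigmapq} and process them in the order of Definition \ref{def:orderSigmapq} (worst first). For a stratum with $p=1$, semi-snc means snc and we again invoke Theorem \ref{thm:theoremB}; for $p=2$ we use the explicit sequence of blowings-up of Proposition \ref{propositioncasepequal2} (Section \ref{sec:twocomponents}); for $p\ge 3$ we use the construction of Section \ref{sec:morethantwocomponents}. In the cases $p\ge 2$ one first blows up to force the Hilbert--Samuel function of $\Supp D$ to take the correct value $H_{p,q}$ (Proposition \ref{lemmafactorssnc}(2)), and then performs cleaning-type blowings-up of the obstruction ideal $J=J(X,D)$ of Definition \ref{definitionJ} to achieve $J_a=\mathcal{O}_{Y,a}$ (Proposition \ref{lemmafactorssnc}(3)). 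Once all strata are treated, Proposition \ref{lemmafactorssnc} (at points in at least two components of $X$) and Remark \ref{rem:characterizationsnc} (at points lying in $X_m$ only) show that $(X,D,E)$ is semi-snc, which completes the induction.

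The main obstacle --- and the bulk of the work in Sections \ref{sec:twocomponents} and \ref{sec:morethantwocomponents} --- is to show that the block of blowings-up handling $\Sigma_{p,q}$ is an isomorphism over the semi-snc locus (so that condition (1) survives) and does not re-introduce bad points into the already-resolved, ``higher'' strata. This rests on the control of the Hilbert--Samuel function in Lemma \ref{lem:HSlemma} and Corollary \ref{cor:HSlemma} --- the value of $H_{\Supp D}$ cannot jump to something incomparable with, or larger than, $H_{p,q}$ without already having been $\ge H_{p,q}$ --- together with an analysis of how $J(X,D)$ transforms under admissible blowings-up. A subsidiary point needing care throughout is the bookkeeping of $E$: since $(X,E)$ is already semi-snc, $\Supp E$ behaves like a transverse union of coordinate hyperplanes, so one must verify that the constructions above, stated for $(X,D)$, carry over to the total divisor $D+E$.
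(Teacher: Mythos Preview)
Your plan matches the paper's proof closely: induction on the number $m$ of components of $X$, with the base case handled by Theorem~\ref{thm:theoremB}, the inductive assumption that $(X^{m-1},D^{m-1},E^{m-1})$ is semi-snc, and then treatment of the remaining non-semi-snc points in $X_m$ stratum by stratum according to $p=1$, $p=2$, $p\ge 3$, invoking respectively Theorem~\ref{thm:theoremB}, Proposition~\ref{propositioncasepequal2}, and the construction of Section~\ref{sec:morethantwocomponents}. The paper organizes the stratum-by-stratum descent more formally via the monotone sets $I_k$ and $K(X,D,I)$ of Definitions~\ref{def:monotone}--\ref{def:K}, but this is just bookkeeping for the same idea.

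One point where your informal summary diverges from the paper: you describe the $p\ge 2$ cases uniformly as ``first correct the Hilbert--Samuel function, then clean the obstruction ideal $J$.'' In fact the two cases are handled by genuinely different mechanisms. For $p\ge 3$ (Proposition~\ref{prop:pgreaterthan3}) one does first force $H_{\Supp D}=H_{p,q}$ on $\Sigma_{p,q}$, but the second step is \emph{not} a cleaning of $V(J)$: it is the single blowing-up of the center $C$ of Definition~\ref{def:center}, built from the smooth loci $C_{p,q}=X_m\cap\Sigma_{p-1,q}(X^{m-1},D^{m-1})$; the role of $J$ there is only diagnostic (via Proposition~\ref{propositionreductionofp}) to show these components consist entirely of non-semi-snc points. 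For $p=2$ (Proposition~\ref{propositioncasepequal2}) there is \emph{no} preliminary Hilbert--Samuel correction at all; one goes straight to principalizing $J$ (Lemma~\ref{removingJpequals2}), then blows up $C_r=\Sigma_{1,r}(X_1,D_1)\cap X_2$ to drop $r(X,D)$, and iterates. Since you cite the correct propositions this does not invalidate your outline, but the sentence ``In the cases $p\ge 2$ one first blows up to force the Hilbert--Samuel function\ldots and then performs cleaning-type blowings-up of $J$'' should be replaced by separate, accurate descriptions of the two procedures.
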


\begin{proof}
The proof is by induction on the number of components $m$ of $X$.

\emph{Case $m=1$.} Since $m=1$, then $(X,D+E)$ is semi-snc if and only if $(X,D+E)$ is snc. This case therefore follows from Theorem \ref{thm:theoremB} applied to $(X,D+E)$.\smallskip

\emph{General case.} The sequence of blowings-up will depend on the ordering of the components $X_i$ of $X$. We will use the notation of Definitions \ref{def:pairs}, \ref{def:triples}. Since $X$ is snc and no component of $D$ lies in the singular locus of $X$, it follows that every component of $D$ lies inside exactly one component of $X$.\medskip

\emph{By induction, we can assume} that $(X^{m-1},D^{m-1},E^{m-1})$ is semi-snc. We want to make $(X^m,D^m,E^m)$ semi-snc. For this purpose, we only have to remove the unwanted singularities from the last component $X_m$ of $X=X^m$. 

Recall that $X$ is partitioned by the sets $\Sigma_{p,q}=\Sigma_{p,q}(X,D)$. Clearly for all $p$ and $q$, the closure 
$\overline{\Sigma}_{p,q}$ of $\Sigma_{p,q}$ has the property 
\[
\overline{\Sigma}_{p,q}\subset\bigcup_{p'\geq p,\,q'\geq q}\Sigma_{p',q'}.
\]
We will construct sequences of blowings-up $X'\rightarrow X$ such that $X'$ is semi-snc on certain strata $\Sigma_{p,q}(X',D')$, and then iterate the process. The following definitions are convenient to
describe the process precisely.

\begin{definitions}\label{def:monotone}
Consider the partial order on $\mathbb{N}^2$ induced by the \red{order on the set $\{\Sigma_{p,q}\}$, see Definition }\ref{def:orderSigmapq}. For $I\subset\mathbb{N}^2$, define the \emph{monotone closure} 
$\overline{I}$ of $I$ as
$\overline{I}:=\{x\in\mathbb{N}^2:\ \exists y\in I,\ x\geq y\}$. We say that $I\subset\mathbb{N}^2$ is \emph{monotone} if $\overline{I}=I$. The set of monotone subsets of $\mathbb{N}^2$ is partially ordered by inclusion, and has the property that any increasing sequence stabilizes. Given a monotone $I$  and a pair $(X,D)$, set 
\[
\Sigma_I(X,D)=\bigcup_{(p,q)\in I}\Sigma_{p,q}(X,D).
\]
\end{definitions}

Then $\Sigma_I(X,D)$ is closed. In fact, if $I$ is monotone then $\Sigma_{I}(X,D)=\bigcup_{(p,q)\in I}\overline{\Sigma}_{p,q}$. 

\begin{definition}\label{def:K}
Given $(X,D)$ and monotone $I$, let $K(X,D,I)$ denote the set of maximal elements of 
$\{(p,q)\in\mathbb{N}^2\setminus I:\ \Sigma_{p,q}(X,D)\neq\emptyset\}$. Also set
$K(X,D):=K(X,D,\emptyset)$. \red{Note that $K(X,D,I)$ consists only of incomparable pairs $(p,q)$ and that it does not simultaneously contain strata $\Sigma_{p,q}$ with $p\geq 3$, $p= 2$ and $p=1$.}
\end{definition}\medskip

\noindent\textbf{Case A:} \emph{We first deal with the case in which $K(X,D)$ contains strata $\Sigma_{p,q}$ with $p\geq 3$.} We can apply Proposition \ref{prop:pgreaterthan3} to reduce to the case in which $(X,D,E)$ is semi-snc at every point lying in at least $3$ components of $X$.\medskip

\noindent\textbf{Case B:} \emph{Assume that $(X,D,E)$ is semi-snc at every point lying in at least $3$ components of $X$.} Let $I:=\{(p,q)\in\mathbb{N}^2:\ p\geq3\}$ and $U$ the complement of $\Sigma_{I_k}(X,D)$. Assume that $K(U,D|_U)$ contains a stratum $\Sigma_{2,q}$, for some $q$. In particular this means that $K(U,D|_U)$ doesn't contain any stratum $\Sigma_{1,q}$. We can apply Proposition \ref{propositioncasepequal2} to $(X,D,E)|_{U}$ to reduce to the case in which $(X,D,E)$ is semi-snc at every point lying in at least $2$ components. Observe that the centers involved never intersect a stratum $\Sigma_{p,q}$ with $p\neq2$.
\medskip

\noindent\textbf{Case C:} \emph{Finally, assume that $(X,D,E)$ is semi-snc at every point in $\Sigma_{p,q}$ for $p\geq2$.} Recall that if $X$ has only one component (and is therefore smooth), then semi-snc is the same as snc. Hence this case follows from Theorem \ref{thm:theoremB} applied to the pair $(X^m,D^m+E^m)|_U$, where $U$ is the complement of the union of all $\Sigma_{p,q}$ with $p\geq2$.
\end{proof}

\begin{remark}\label{rem:closedcenters}
The centers of blowing up used in Proposition \ref{propositioncasepequal2} (Case B) and also in Theorem \ref{thm:theoremB} (Case C) are closed in $U$ and contain only non-semi-snc points. Since $(X,D,E)$ is semi-snc 
on $\Sigma_{I_k}(W_{j'_k},F_{j'_k})$, and therefore in a neighborhood of the latter, we  see that these centers are also closed in $W_{j'_k}$.
\end{remark}

%%%%%%%%%%%%%%%%%%%%%%%%%%%%%%%%%%%%%%%%%%%%%%%%%%%%%%%%%%%%%%%%%%%%%%%%%%%%%%%%%%%%%%%%%%%%%%%%%%%%%%%%%%%%%%%%%%%%%%%%%%%%%%%%%%%%%%%%%%%%%%%%%%%%%%%%%%%%%%%%%%%%%%%%%%%%%%%%%%%%%%%%%%%%%%%%%%%%%%%%%%%%%%%%%%%%%%%%%%%%%%%%%%%%%%%%%%%%%%%%%%%%%%%%%%%%%%%%%%%%%%%%%%%%%%%%

\section{The case of more than 2 components}\label{sec:morethantwocomponents}
In this section, we show how to remove the unwanted singularities in the strata 
$\Sigma_{p,q}(X,D)$, with $p\geq3$.

Throughout the section, $(X,D,E)$ denotes a triple as in Definition \ref{def:triples},
and we use the notation of the latter. As in Theorem \ref{thm:fortriples}, we assume
that $X$ is snc, $D$ is reduced and has no component in the singular locus of $X$, and 
$(X,E)$ is semi-snc. We consider $K(X,D)$ as in Definition \ref{def:K}.

\begin{proposition}\label{prop:pgreaterthan3}
With the hypothesis of Theorem \ref{thm:fortriples}, assume that $(X,D,E)$ is such that $K(X,D)$ contains a stratum $\Sigma_{p,q}$ with $p\geq 3$. Then, there is a composition of admissible blowings-up\: $X'\rightarrow X$ such that $(X',D',\tE)$ is semi-snc at every point lying in at least $3$ components of $X$.
\end{proposition}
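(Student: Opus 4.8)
The plan is to remove the non-semi-snc points of $(X,D+E)$ lying in at least three components of $X$ one stratum at a time, treating the strata $\Sigma_{p,q}(X,D)$ with $p\geq 3$ in the order of Definition~\ref{def:orderSigmapq} (largest Hilbert--Samuel function $H_{p,q}$ first). Formally I would induct over the monotone subsets $I\subset\mathbb{N}^2$ of Definitions~\ref{def:monotone}: assuming the current transform of the triple is semi-snc on $\Sigma_I(X,D)$ and that $(p,q)\in K(X,D,I)$ (Definition~\ref{def:K}) has $p\geq 3$, I would produce admissible blowings-up, isomorphisms over $\Sigma_I$ and over the semi-snc locus, making the triple semi-snc on $\Sigma_{p,q}$ as well, and then iterate; since increasing sequences of monotone sets stabilise, finitely many steps finish. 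The ordering is used as follows: as $(p,q)$ is maximal outside $I$, every stratum strictly above it lies in $I$, and by monotonicity of $(p,q)\mapsto H_{p,q}$ together with the inclusion $\overline{\Sigma}_{p,q}\subset\bigcup_{p'\geq p,\,q'\geq q}\Sigma_{p',q'}$ these are exactly the strata other than $\Sigma_{p,q}$ that meet $\overline{\Sigma}_{p,q}$; hence the non-semi-snc locus of $\overline{\Sigma}_{p,q}$ already lies in $\Sigma_{p,q}$. Since condition (1) of Proposition~\ref{lemmafactorssnc} holds throughout by the outer induction on the number of components of $X$, making the triple semi-snc on $\Sigma_{p,q}$ amounts to arranging conditions (2) and (3) of that proposition there.

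First I would arrange condition (2), that $H_{\Supp D,a}=H_{p,q}$ for every $a\in\Sigma_{p,q}$. For this, apply the desingularization algorithm of \cite{BMinv}, whose first entry is the Hilbert--Samuel function, to the reduced codimension-two subvariety $\Supp D\subset Y$, on a suitable neighbourhood of $\overline{\Sigma}_{p,q}$ and carried just far enough that $H_{\Supp D}\leq H_{p,q}$ on $\overline{\Sigma}_{p,q}\setminus\Sigma_I$. The centers produced are admissible and contained in the (closed) locus where $H_{\Supp D}\nleq H_{p,q}$, hence disjoint from the semi-snc locus and from $\Sigma_I$, so these blowings-up are isomorphisms there. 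By Lemma~\ref{lem:HSlemma} (see also Corollary~\ref{cor:HSlemma}), a point $a\in\Sigma_{p,q}$ satisfies $H_{\Supp D,a}\not< H_{p,q}$, so once $H_{\Supp D,a}\leq H_{p,q}$ one has equality; thus this sub-step ends with condition (2) holding on $\Sigma_{p,q}$.

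Next I would arrange condition (3), $J(X,D)_a=\mathcal{O}_{Y,a}$ (Definition~\ref{definitionJ}), by a cleaning procedure modelled on \cite[\S2]{BMmin}. With conditions (1) and (2) holding on $\Sigma_{p,q}$, the computation in the proof of Proposition~\ref{lemmafactorssnc} supplies local coordinates in which $D=(x_1\dotsm x_p=f=0)$ with $f=x_1\dotsm x_{p-1}g_1+y_1\dotsm y_q g_2$ and $J_a=(x_p,\,x_1\dotsm x_{p-1},\,g_2)$, so the non-semi-snc locus in $\Sigma_{p,q}$ is $\cosupp J$, a subset of $X_m\cap X^{m-1}$ of codimension $\geq 3$ in $Y$. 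I would blow up $\cosupp J$ by a sequence of admissible blowings-up chosen, just as the monomial cleaning of \cite[\S2]{BMmin} respects the diagram of initial exponents, so as to preserve $H_{\Supp D}=H_{p,q}$, until $g_2$ becomes a unit, i.e. $J=\mathcal{O}_Y$. Since $J$ is the unit ideal wherever $(X,D)$ is semi-snc, these blowings-up are again isomorphisms over the semi-snc locus and over $\Sigma_I$. One also checks, as in Remark~\ref{rem:closedcenters} and using compatibility of the construction with the inductive structure, that the hypotheses of Theorem~\ref{thm:fortriples} --- snc-ness of $X$, $D$ reduced with no component in the singular locus of $X$, $(X,E)$ and $(X^{m-1},D^{m-1},E^{m-1})$ semi-snc --- persist through each block of blowings-up. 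This completes the step for $\Sigma_{p,q}$, and iterating over all $(p,q)$ with $p\geq 3$ gives the proposition.

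I expect the cleaning step for condition (3) to be the main obstacle: it requires a coordinate-free construction of the obstruction ideal $J$ together with a marking that records the Hilbert--Samuel stratification, and an analysis of its behaviour under admissible blowing up showing --- in the present codimension-two situation, rather than the purely monomial one of \cite[\S2]{BMmin} --- that the procedure terminates with $J$ trivial while leaving conditions (1) and (2), the snc-ness of $X$, and the semi-snc locus untouched. Secondary technical points are verifying that the Hilbert--Samuel desingularization used for condition (2) can be confined to a neighbourhood of $\overline{\Sigma}_{p,q}$ so as not to disturb already-treated strata, and the bookkeeping that the inductive hypotheses of Theorem~\ref{thm:fortriples} survive each block of blowings-up.
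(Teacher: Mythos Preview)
Your high-level architecture --- induction over monotone subsets $I\subset\mathbb{N}^2$, and for each maximal $(p,q)\notin I$ a two-phase procedure first forcing $H_{\Supp D}=H_{p,q}$ on $\Sigma_{p,q}$ and then forcing $J=\mathcal{O}_Y$ --- is exactly the scaffolding the paper uses. Your treatment of the Hilbert--Samuel phase is also close to the paper's Step~1, though the paper is more explicit: it decomposes the maximum locus of the invariant as $A\cup B$ (components without, resp.\ with, a semi-snc generic point) and blows up only $A$; Lemma~\ref{lem:centersinsigmapq} and Claim~\ref{claim:claimAempty} then show that once $A=\emptyset$ the Hilbert--Samuel function is correct on every $\Sigma_{p,q}$ with $(p,q)\in K$.

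Where you diverge from the paper is in the second phase, and this is also where your proposal has a genuine gap. You propose to ``clean'' $\cosupp J$ by a sequence of admissible blowings-up in the style of \cite[\S2]{BMmin} until $g_2$ becomes a unit, and you flag this as the main obstacle without resolving it. In fact that cleaning route is what the paper uses for $p=2$ (Section~\ref{sec:twocomponents}), where it is genuinely delicate: one must first desingularize $V(J)$, then do monomial cleaning, and control how $J(X,D)$ transforms (Lemmas~\ref{claimcasepequal21} and~\ref{removingJpequals2}). For $p\geq 3$ the paper does something much simpler, resting on an algebraic fact you are missing: Proposition~\ref{propositionreductionofp} shows that if $p\geq 3$ and $g_2$ is not a unit then necessarily $g_2\in(x_1,\ldots,x_{p-1},y_1,\ldots,y_q)$. (The Remark following it stresses that this fails for $p=2$, cf.\ Example~\ref{example2}.) This forces the non-semi-snc locus inside $\Sigma_{p,q}$ to fill out entire components of the smooth subvariety $C_{p,q}=X_m\cap\Sigma_{p-1,q}(X^{m-1},D^{m-1})=(x_1=\dotsb=x_p=y_1=\dotsb=y_q=0)$ (Lemmas~\ref{lem:center}, \ref{lem:pgreaterthanthree}). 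A \emph{single} blow-up of those components (Definition~\ref{def:center}, Proposition~\ref{pro:pgreaterthanthree}) then strictly decreases $p$ or $q$ at every preimage point, so no iterated cleaning of $J$ is needed at all in the $p\geq 3$ regime.

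In short: your outline is sound through Step~1, but for Step~2 you are reaching for the harder $p=2$ mechanism and leaving it unjustified, whereas the paper exploits the extra rigidity available when $p\geq 3$ (Proposition~\ref{propositionreductionofp} and Lemma~\ref{lemmareductionofp}) to reduce the second phase to one explicit admissible blow-up.
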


\begin{proof}
We start with the variety $W_0:=X$ and the divisors $F_0:=D$, $G_0=E$, and we define $I_0$ as the monotone closure of
\[
\{\text{maximal elements of }\{(p,q)\in\mathbb{N}^2:\ \Sigma_{p,q}(W_0,F_0)\neq\emptyset\}\}.
\]
Put $j_0 = 0$.
Inductively, for $k\geq0$, we will construct admissible blowings-up 
\begin{equation}\label{eq:2steps}
W_{j_k}\leftarrow\dotsm\leftarrow W_{j'_{k}}\leftarrow\dotsm\leftarrow W_{j_{k+1}}
\end{equation}
such that, if $(W_{j_{k+1}},F_{j_{k+1}}, G_{j_{k+1}})$ denotes the transform of the triple 
$(W_{j_k},F_{j_k}, G_{j_k})$, then $(W_{j_{k+1}},F_{j_{k+1}})$ semi-snc on $\Sigma_{I_k}(W_{j_{k+1}},F_{j_{k+1}})$. Then we define 
\[
I_{k+1}:=\overline{I_k\cup K(W_{j_{k+1}},F_{j_{k+1}},I_k)}. 
\]
We have $I_{k+1}\supset I_k$, with equality only if $\Sigma_{I_k}(W_{j_k},F_{j_k})=W_{j_k}$.

In this way we define a sequence $I_0\subset I_1\subset\ldots$. Since this sequence stabilizes, there is $t$ such that $\Sigma_{I_t}(W_{j_t},F_{j_t})=W_{t}$. By construction, $W_{j_t}$ is semi-snc on $\Sigma_{I_t}(W_{j_t},F_{j_t})$, so that
$(W_{j_t},F_{j_t})$ is everywhere semi-snc.
\medskip

The blowing-up sequence \eqref{eq:2steps} will be described in two steps.
The first provides a sequence of admissible blowings-up $W_{j_k}\leftarrow\ldots\leftarrow W_{j'_k}$ for the purpose of making the Hilbert-Samuel function equal to $H_{p,q}$ on 
$\Sigma_{p,q}$, for each $(p,q)\in K(W_{j'_k},F_{j'_k})$. The second step provides a sequence of admissible blowings-up $W_{j'_k}\leftarrow\ldots\leftarrow W_{j_{k+1}}$ that finally removes the non-semi-snc points from the $\Sigma_{p,q}$, where $(p,q)\in K(W_{j_{k+1}},F_{j_{k+1}})$. 
\medskip

\noindent
\textbf{Step 1:} We can assume that, locally, $X+E$ is embedded as an snc hypersurface in a smooth variety $Z$. We consider the embedded desingularization algorithm applied to $\Supp D$ with the divisor $X+E$ in $Z$. We will blow up certain components of the centers of blowing up involved. These centers are the maximum loci of the desingularization invariant, which decreases after each blowing-up. Our purpose is to decrease the Hilbert-Samuel function, which is the first entry of the invariant. During the desingularization process, some components of $X+E$ may be moved away from 
$\Supp D$ before $\Supp D$ becomes smooth. We will only use centers from the desingularization algorithm that contain no semi-snc points. By assumption, all non-semi-snc points lie in $X_m$, so that all centers we will consider are inside $D_m$. Therefore $X_m$ (which is a component of $X+E$) is not moved away before $D_m$ 
becomes smooth. 

We are interested in the maximum locus of the invariant on the complement $U_k$ of $\Sigma_{I_k}(W_{j_k},F_{j_k})$ in $W_{j_k}$. The corresponding blowings-up are used to decrease the maximal values of the Hilbert-Samuel function. 

\begin{lemma}\label{lem:centersinsigmapq}
Let $C$ be an irreducible smooth subvariety of $\Supp D$. Assume that the Hilbert-Samuel function equals $H_{p,q}$ (for given $p$, $q$) at every point of $C$. If 
$C \cap \Sigma_{p,q} \neq \emptyset$, then $C\subset\Sigma_{p,q}$.
\end{lemma}

\begin{proof}
Let $a\in C \cap \Sigma_{p,q}$. Since the Hilbert-Samuel function of $\Supp D$ is constant on $C$, then $a$ has a neighborhood $U\subset C$, each point of which lies in
precisely those components of $D$ at $a$.  Therefore, $U\subset\Sigma_{p,q}$. Since the closure of $\Sigma_{p,q}$ lies in the union of the $\Sigma_{p',q'}$ with $p'\geq p$,  $q'\geq q$, any $b \in C \setminus U$ belongs to $\Sigma_{p',q'}$, for some $p'\geq p$, $q'\geq q$. Thus $H_{\Supp D,b} = H_{p,q} \leq H_{p',q'}$. But, by Lemma \ref{lem:HSlemma}, the Hilbert-Samuel function cannot be $< H_{p',q'}$ on 
$\Sigma_{p',q'}$ . Therefore $b \in \Sigma_{p,q}$.
\end{proof}

We write the maximum locus of the invariant in $U_k$ as a disjoint union $A\cup B$ in the following way: $A$ is the union of those components of the maximum locus containing no semi-snc points, and $B$ is the union of the remaining components. Thus $B$ is
the union of those components of the maximum locus of the invariant with generic point semi-snc. Each component of $B$ has Hilbert-Samuel function $H_{p,q}$, for some $p,q$, and lies in the corresponding $\Sigma_{p,q}$ by Lemma \ref{lem:centersinsigmapq}.
On the other hand, any component $C$ of the maximum locus of the invariant where either the invariant does not begin with $H_{p,q}$, for some $p, q$, or the invariant begins with some $H_{p,q}$ but no point of $C$ belongs to $\Sigma_{p,q}$, is a component 
of $A$. 

Both $A$ and $B$ are closed in the open set $U_k\subset W_{j_k}$. $B$ is not necessarily closed in $W_{j_k}$. But all points in the complement of $U_k$ are semi-snc,
and the semi-snc points are open. Since no points of $A$ are semi-snc, $A$ has no limit points in the complement of $U_k$. Thus $A$ is closed in $W_{j_k}$.

We blow up with center $A$. Then the invariant decreases in the preimage 
of $A$. Recall that $A$ and $B$ depend on $(X,D)$. We use the same notation $A$ and $B$ to 
denote the sets with the same meaning as above, after blowing up. So we can continue to blow up until $A=\emptyset$. Say we are now in year $j_k'$.

\begin{claim}\label{claim:claimAempty}
If $(p,q)\in K(W_{j'_{k}},F_{j'_{k}})$ (so that $A=\emptyset$), then the 
Hilbert-Samuel function equals $H_{p,q}$ at every point of $\Sigma_{p,q}$. 
\end{claim}
\begin{proof}
Let $a\in\Sigma_{p,q}$, where $(p,q)\in K(W_{j'_k},F_{j'_k})$. Assume that the Hilbert-Samuel function $H$ at $a$ is not equal to $H_{p,q}$. Recall that every point of $B$ has Hilbert-Samuel function of the form $H_{p',q'}$ for some $p', q'$,  and belongs to 
$\Sigma_{p',q'}$. Therefore $a\notin B$, so the invariant at $a$ is not maximal. Thus there is $b\in B$ where the Hilbert-Samuel function is $H_{p',q'}>H$ for some $p', q'$ and $b\in\Sigma_{p',q'}$. By Corollary \ref{cor:HSlemma}, $H_{p',q'}>H_{p,q}$. This means that $\Sigma_{p',q'}>\Sigma_{p,q}$. Since $(p,q)\in K(W_{j'_k},F_{j'_k})$ then $(p',q')\in I_{k}$. We have reached a contradiction because $b\in B$ and $B$ lies in the complement of $\Sigma_{I_k}(W_{j'_k},F_{j'_k})$.
\end{proof} 

The claim \ref{claim:claimAempty} shows that when $A=\emptyset$ we have achieved the goal of Step $1$, i.e., the Hilbert-Samuel function equals $H_{p,q}$ at every point of
$\Sigma_{p,q}$, where $(p,q)\in K(W_{j'_k},F_{j'_k})$.
\medskip

\noindent
\textbf{Step 2:} We now describe blowings-up that eliminate non-semi-snc points
from the strata $\Sigma_{p,q}$, with $(p,q)\in K(W_{j'_k},F_{j'_k})$. Note this does
not mean that all the points in the preimage of these strata will be semi-snc. Only the 
points of the strata $\Sigma_{p,q}$, for the transformed $(X,D,E)$, for $(p,q)\in K(W_{j'_k},F_{j'_k})$, will be made semi-snc. The remaining points of the preimages will belong to new strata $\Sigma_{p',q'}$, where $p'<p$ or $q'<q$ and therefore will be treated in further iterations of Steps 1 and 2. 

\emph{We are assuming that $K(W_{j'_k},F_{j'_k})$ contains some stratum $\Sigma_{p,q}$ with $p\geq3$.} Hence, by Definition \ref{def:orderSigmapq}, all strata in $K(W_{j'_k},F_{j'_k})$ is of the form $\Sigma_{p,q}$ with $p\geq3$. Therefore this case follows from Proposition \ref{pro:pgreaterthanthree} below 
applied to $(X,D,E)|_{U}$, where $U$ is the complement of $\Sigma_{I_k}(W_{j'_k},F_{j'_k})$ in $W_{j'_k}$. Observe that the center of the blowing-up involved never intersects a stratum $\Sigma_{p,q}$ with $p\leq2$. 
\end{proof}

The following lemmas are needed to state Proposition \ref{pro:pgreaterthanthree}.

\begin{lemma}\label{lem:center}
Assume that $(X^{m-1},D^{m-1},E^{m-1})$ is semi-snc and let $(p,q)\in K(X,D)$.
Define
\begin{equation}\label{eq:tC}
C_{p,q} := X_m\cap\Sigma_{p-1,q}(X^{m-1},D^{m-1}).
\end{equation}
Then:
\begin{enumerate}
\item
$C_{p,q}$ is smooth;
\item
$\Sigma_{p,q}(X,D) \subset C_{p,q} \subset \bigcup_{q'\leq q}\Sigma_{p,q'}(X,D)$.
\end{enumerate}
\end{lemma}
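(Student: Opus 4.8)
The plan is to reduce both statements to a computation in local coordinates at a point of $X_m\cap X^{m-1}$ adapted simultaneously to $X$ (snc), to $(X^{m-1},D^{m-1}+E^{m-1})$ (semi-snc), and to the smooth hypersurface $X_m$; here $p\ge 2$, as is built into the definition of $C_{p,q}$. The key is that $X_m|_{X^{m-1}}$ is a summand of $E^{m-1}$, so at $a\in X_m\cap X^{m-1}$, lying in $p'$ components of $X^{m-1}$, the semi-snc structure of $(X^{m-1},D^{m-1}+E^{m-1})$ provides regular coordinates $(x_1,\dots,x_{p'},z,y_1,\dots,y_s,w_1,\dots)$ for the ambient smooth $Y$ at $a=0$ with $X^{m-1}=(x_1\cdots x_{p'}=0)$, with $z$ cutting out $X_m\cap X^{m-1}$ inside $X^{m-1}$, and with $\Supp D^{m-1}=(x_1\cdots x_{p'}=y_1\cdots y_t=0)$ as a subvariety of $Y$, where $t$ is the common number of components of $D^{m-1}$ through $a$ lying in each $X_i$ (this common value exists because $(X^{m-1},D^{m-1})$ is semi-snc at $a$). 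First I would promote $z$ to an equation of $X_m$: the local equation $g$ of $X_m$ vanishes on each $X_m\cap X_i=(z=x_i=0)$, so $g\in\bigcap_{i\le p'}(z,x_i)=(z,\,x_1\cdots x_{p'})$; writing $g=zu+x_1\cdots x_{p'}v$, the facts that $X=X^{m-1}\cup X_m$ is snc and $X_m$ is smooth force $u$ to be a unit, so $X_m=(z+x_1\cdots x_{p'}u^{-1}v=0)$. Replacing $z$ by $z+x_1\cdots x_{p'}u^{-1}v$ modifies $z$ only modulo the ideal $(x_1\cdots x_{p'})$ of $X^{m-1}$, hence does not change restrictions to $X^{m-1}$ and so leaves the semi-snc picture of $(X^{m-1},D^{m-1}+E^{m-1})$ intact, while now $X_m=(z=0)$. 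Thus near $a$: $X=(x_1\cdots x_{p'}z=0)$, $X^{m-1}=(x_1\cdots x_{p'}=0)$, $X_m=(z=0)$, $\Supp D^{m-1}=(x_1\cdots x_{p'}=y_1\cdots y_t=0)$.

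For (1): if $a\in C_{p,q}$ then $p'=p-1$ and $t=q$. In these coordinates a nearby point $b$ lies in exactly $p-1$ components of $X^{m-1}$ iff $x_1(b)=\dots=x_{p-1}(b)=0$, and then each of these components contains exactly $\#\{j\le q:y_j(b)=0\}$ components of $D^{m-1}$ through $b$, which is $q$ iff $y_1(b)=\dots=y_q(b)=0$. Hence, near $a$, $\Sigma_{p-1,q}(X^{m-1},D^{m-1})=(x_1=\dots=x_{p-1}=y_1=\dots=y_q=0)$, so $C_{p,q}=X_m\cap\Sigma_{p-1,q}(X^{m-1},D^{m-1})=(z=x_1=\dots=x_{p-1}=y_1=\dots=y_q=0)$ is a smooth coordinate subspace near $a$; since $a$ was an arbitrary point of $C_{p,q}$, $C_{p,q}$ is smooth.

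For the right-hand inclusion of (2): for $b\in C_{p,q}$ the coordinates above show $b$ lies in exactly the $p$ components $X_1,\dots,X_{p-1},X_m$ of $X$; each $X_i$ with $i\le p-1$ contains exactly the $q$ components $(x_i=y_j=0)$, $j\le q$, of $D$ through $b$ (those coming from $D^{m-1}$; $D_m\subset X_m$ contributes none to $X_i$), while $X_m$ contains some number $c\ge 0$ of components of $D_m$ through $b$; so the minimum, over the components of $X$, of the number of components of $D$ through $b$ equals $\min(q,c)\le q$, i.e.\ $b\in\Sigma_{p,\min(q,c)}(X,D)\subset\bigcup_{q'\le q}\Sigma_{p,q'}(X,D)$. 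For the left-hand inclusion: since $(X^{m-1},D^{m-1}+E^{m-1})$ is semi-snc, $(X,D)$ is semi-snc at every point off $X_m$, so the non-semi-snc points of $\Sigma_{p,q}(X,D)$ all lie on $X_m$; at a semi-snc point $a\in\Sigma_{p,q}(X,D)$ on $X_m$ the standard local form of $(X,D)$ with $X_m=(x_p=0)$ gives $\Supp D^{m-1}=(x_1\cdots x_{p-1}=y_1\cdots y_q=0)$, hence $a\in\Sigma_{p-1,q}(X^{m-1},D^{m-1})\cap X_m=C_{p,q}$; and at a non-semi-snc point $a\in\Sigma_{p,q}(X,D)\cap X_m$ — where, after the Hilbert--Samuel normalization that precedes the use of this lemma, $H_{\Supp D,a}=H_{p,q}$ — applying Lemma \ref{lem:HSlemma} to $I_D=(x_1\cdots x_{p-1},y_1\cdots y_r)\cap(x_p,f)$ forces $r=q$, whence again $\Supp D^{m-1}=(x_1\cdots x_{p-1}=y_1\cdots y_q=0)$ and $a\in C_{p,q}$.

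The main obstacle I anticipate is the coordinate alignment of the first paragraph — producing a single coordinate system recording both the snc structure of $X$ (with $X_m$ a coordinate hyperplane) and the semi-snc structure of $(X^{m-1},D^{m-1}+E^{m-1})$ — the crucial point being that straightening the equation of $X_m$ perturbs a coordinate only modulo the ideal of $X^{m-1}$, so it does not disturb the coordinates already adapted on $X^{m-1}$. A secondary point is the left-hand inclusion of (2): beyond the observation that non-semi-snc points of $\Sigma_{p,q}(X,D)$ lie on $X_m$, it uses that such points have Hilbert--Samuel function $H_{p,q}$ — guaranteed by the normalization preceding this lemma in the main argument — which is where Lemma \ref{lem:HSlemma} enters.
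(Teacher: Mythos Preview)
The paper's entire proof of this lemma is the single sentence ``This is immediate from the definitions of $\Sigma_{p,q}$, $K(X,D)$ and $C_{p,q}$.'' Your detailed local-coordinate argument is the honest unpacking of that sentence, and for part (1) and the right-hand inclusion of (2) it is correct and matches what the paper has in mind: once you have the adapted coordinates (your first paragraph does this carefully, and the trick of straightening $z$ modulo $(x_1\cdots x_{p'})$ is exactly right), $C_{p,q}$ is visibly a coordinate subspace and the count of components at any $b\in C_{p,q}$ gives $b\in\Sigma_{p,\min(q,c)}$.

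Where you diverge from the paper is the left-hand inclusion. You split into semi-snc and non-semi-snc points on $X_m$, and for the latter you invoke Lemma~\ref{lem:HSlemma} under the extra hypothesis $H_{\Supp D,a}=H_{p,q}$. This hypothesis is \emph{not} part of the statement of Lemma~\ref{lem:center}, and in fact the inclusion $\Sigma_{p,q}(X,D)\subset C_{p,q}$ can fail without it: if $a\in X_m$ with $D^{m-1}$ contributing $r>q$ components on each $X_i$ ($i<m$) while $D_m$ contributes only $q$ components on $X_m$, then $a\in\Sigma_{p,q}(X,D)$ but $a\in\Sigma_{p-1,r}(X^{m-1},D^{m-1})$, not $\Sigma_{p-1,q}$. (Points of $\Sigma_{p,q}(X,D)$ off $X_m$ give even simpler counterexamples.) So you have correctly identified that the literal statement needs the Hilbert--Samuel normalization of Step~1; the paper's one-line proof glosses over this, but the lemma is only ever applied in Lemma~\ref{lem:pgreaterthanthree} and Definition~\ref{def:center}, where that normalization is already in force and where only non-semi-snc points (hence points of $X_m$) matter. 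Your argument is thus more careful than the paper's and proves exactly what is actually used, though strictly speaking it proves a corrected version of the lemma rather than the lemma as stated.
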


\begin{lemma}\label{lem:pgreaterthanthree}
Assume that $(X^{m-1},D^{m-1},E^{m-1})$ is semi-snc and let $(p,q)\in K(X,D)$.
Assume that $p\geq 3$ and that the Hilbert-Samuel function equals
$H_{p,q}$, at every point of $\Sigma_{p,q} = \Sigma_{p,q}(X,D)$. Then:
\begin{enumerate}
\item 
Every irreducible component of $C_{p,q}$
which contains a non-semi-snc point of $\Sigma_{p,q}$ consists entirely of
non-semi-snc points.
\item
Every irreducible component of $\Sigma_{p,q}$ consists entirely
either of semi-snc points or non-semi-snc points.
\end{enumerate}
\end{lemma}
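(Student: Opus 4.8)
The plan is to combine the inductive criterion of Proposition~\ref{lemmafactorssnc} with Lemma~\ref{lem:center} and an explicit local analysis of the ideal $J=J(X,D)$ along $C_{p,q}$.

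I begin with two reductions. \emph{(i)} Since $(X^{m-1},D^{m-1},E^{m-1})$ is semi-snc and $H_{\Supp D,b}=H_{p,q}$ for every $b\in\Sigma_{p,q}$, conditions (1) and (2) of Proposition~\ref{lemmafactorssnc} hold at every point of $\Sigma_{p,q}$; hence a point $a\in\Sigma_{p,q}$ is semi-snc for $(X,D)$ if and only if $J_a=\cO_{Y,a}$. More generally, by Lemma~\ref{lem:center} and the semi-snc local form of $(X^{m-1},D^{m-1})$, every point of $C_{p,q}$ lies in $p\ge 3$ components of $X$, so Proposition~\ref{lemmafactorssnc} shows that any point of $C_{p,q}$ where $J\ne\cO$ is non-semi-snc. \emph{(ii)} Statement (2) follows from statement (1): by Lemma~\ref{lem:center}, $\Sigma_{p,q}\subset C_{p,q}$ and $C_{p,q}$ is smooth, so the irreducible components of $C_{p,q}$ are disjoint and open-and-closed in $C_{p,q}$; an irreducible component $Z$ of $\Sigma_{p,q}$ therefore lies in a single component $C'$ of $C_{p,q}$, and by statement (1), $Z$ is entirely non-semi-snc if $C'$ meets the non-semi-snc part of $\Sigma_{p,q}$ and entirely semi-snc otherwise.

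It remains to prove statement (1). Let $C'$ be an irreducible component of $C_{p,q}$ meeting the non-semi-snc locus of $\Sigma_{p,q}$, say at $a$. Since $C'$ is smooth, hence connected, and the non-semi-snc locus of $X$ is closed, it suffices to produce a neighbourhood of $a$ in $C'$ consisting of non-semi-snc points; the subset of $C'$ of points admitting such a neighbourhood is then open, closed and nonempty, hence all of $C'$. Work locally at $a=0$ in the ambient smooth variety $Y$. By Proposition~\ref{lemmafactorssnc} and Remark~\ref{note:HSlemma} there are coordinates $(x_1,\dots,x_p,y_1,\dots,y_q,z_1,\dots)$ with $X=(x_1\cdots x_p=0)$, $X_m=(x_p=0)$, $\Supp D^{m-1}=(x_1\cdots x_{p-1}=y_1\cdots y_q=0)$ and $D_m=(x_p=f=0)$; by Lemma~\ref{lem:HSlemma} we may take $f\in(x_1\cdots x_{p-1},y_1\cdots y_q)$ with $\ord f=q$ and $f|_{(x_p=0)}=f_1\cdots f_q$ a product of $q$ distinct irreducible factors of order one. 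Writing $f=x_1\cdots x_{p-1}g_1+y_1\cdots y_q g_2$ one finds, as in the proof of Proposition~\ref{lemmafactorssnc}, that $J=(x_p,\,x_1\cdots x_{p-1},\,g_2)$ on this neighbourhood, and $g_2(0)=0$ since $a$ is non-semi-snc. The semi-snc local form of $(X^{m-1},D^{m-1})$ shows that near $a$ the variety $C_{p,q}$ is the coordinate subspace $(x_1=\dots=x_p=0,\ y_1=\dots=y_q=0)$, so $C'=C_{p,q}$ near $a$ and $J\cdot\cO_{C'}$ is locally generated by $g_2|_{C'}$; as $J_b\ne\cO$ precisely when $g_2(b)=0$, reduction (i) reduces everything to the following claim.

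\emph{Claim.} $g_2|_{C_{p,q}}\equiv 0$ near $a$. This is the main obstacle. Restricting the identity $f=x_1\cdots x_{p-1}g_1+y_1\cdots y_q g_2$ to $T:=(x_1=\dots=x_p=0)$ gives $f|_T=y_1\cdots y_q\cdot g_2|_T$, with $f|_T=\prod_k f_k|_T$, and $g_2|_{C_{p,q}}$ is the restriction of $g_2|_T$ to $(y=0)$. If some $f_k|_T=0$ the claim is clear, so assume not; since $\ord$ is additive in the regular local ring $\cO_T$, $\sum_k\ord(f_k|_T)=\ord(f|_T)=q+\ord(g_2|_T)$, and as each $\ord(f_k|_T)\ge 1$ while $g_2(0)=0$ forces $\ord(g_2|_T)\ge 1$, some factor has $\ord(f_{k_0}|_T)\ge 2$, i.e.\ the linear part of $f_{k_0}|_{(x_p=0)}$ lies in $\Span(x_1,\dots,x_{p-1})$. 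Using this together with: the incidence sets $I_k=\{(j,l):f_k|_{(x_p=0)}\in(x_j,y_l)\}$ satisfying $\bigcup_k I_k=\{1,\dots,p-1\}\times\{1,\dots,q\}$ (which holds because $f|_{(x_p=0)}\in(x_1\cdots x_{p-1},y_1\cdots y_q)$); the fact that $D$ is reduced with no component in $\Sing X$, so that no $f_k|_{(x_p=0)}$ is divisible by an $x_i$ while it has order one; and the hypothesis $p\ge 3$, so that $x_1\cdots x_{p-1}$ has degree $\ge 2$ — one shows that $\prod_k f_k|_T$ equals $y_1\cdots y_q$ times a power series vanishing on $(y=0)$, i.e.\ $g_2|_T\in(y_1,\dots,y_q)\cO_T$, which is the claim. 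Granting it, statement (1) holds for $C_{p,q}$, hence for $\Sigma_{p,q}\subset C_{p,q}$, and statement (2) follows by (ii). The delicate step is this last piece of bookkeeping with the $f_k$ and the sets $I_k$; the assumption $p\ge 3$ enters precisely there, and is what distinguishes the argument from the case $p=2$.
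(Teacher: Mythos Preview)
Your overall architecture coincides with the paper's: reduce via Proposition~\ref{lemmafactorssnc} to the criterion $J_a=\cO_{Y,a}$, compute $J=(x_p,\,x_1\cdots x_{p-1},\,g_2)$ in adapted local coordinates (this is Lemma~\ref{lem:compJ} in the paper), show that $g_2$ vanishes along the local piece $(x_1=\cdots=x_p=y_1=\cdots=y_q=0)$ of $C_{p,q}$, and then propagate to the whole component. Your deduction of (2) from (1) is also fine. One small correction: your open-and-closed argument for passing from a neighbourhood of $a$ to all of $C'$ is not quite right, since the set you describe is the \emph{interior} of the non-semi-snc locus in $C'$, and you have not shown that this interior is closed. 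The paper argues instead that a nonempty open subset of the irreducible variety $C_0$ is dense, and the non-semi-snc locus is closed, so its closure is all of $C_0$.

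The substantive gap is your Claim. What must actually be proved is: if $f$ (viewed in $\cO_{X_m,a}$) has $q$ irreducible factors each of order one, $f\in(x_1\cdots x_{p-1},\,y_1\cdots y_q)$ with $p\ge 3$, and $g_2$ is a non-unit in the decomposition $f=x_1\cdots x_{p-1}g_1+y_1\cdots y_q g_2$, then $g_2\in(x_1,\dots,x_{p-1},y_1,\dots,y_q)$. The paper isolates this as Proposition~\ref{propositionreductionofp}, and its proof is not mere bookkeeping. One first partitions the irreducible factors $h_k$ of $f$ according to whether each is \emph{associated to} some $x_i$ or some $y_j$ (determined by the linear part), proves that every $x_i$ has an associated factor (this is where the hypothesis that $g_2$ is a non-unit enters), regroups the product into the form \eqref{eq:givenfactors}, then iteratively modifies the coefficients $m_\cdot,a_\cdot,n_\cdot,b_\cdot$ so that the divisibility hypothesis \eqref{eq:star} of Lemma~\ref{lemmareductionofp} is satisfied, and finally invokes that lemma (proved by induction on the number $s$ of $y$-type factors) to conclude that every monomial in the expanded product lies in $(x_1\cdots x_{p-1})$ or in $y_1\cdots y_q\cdot(x_1,\dots,x_{p-1},y_1,\dots,y_q)$. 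The hypothesis $p\ge 3$ is used exactly in Lemma~\ref{lemmareductionofp}: the base case $s=0$ requires at least two $x$-type factors so that the cross terms $x_ia_j$ and $a_ia_j$ land in the correct ideal. Your order-counting observation that some $f_{k_0}|_T$ has order $\ge 2$ is true but does not appear in this argument and does not by itself yield the conclusion; restricting to $T$ collapses the $x$-structure that the partition scheme exploits. As written, your ``one shows that'' asserts the heart of the lemma without proof.
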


\begin{definition}\label{def:center}
Assume that $(X^{m-1},D^{m-1},E^{m-1})$ is semi-snc and that, for all 
$(p,q)\in K(X,D)$, where $p \geq 3$, the Hilbert-Samuel function equals
$H_{p,q}$, at every point of $\Sigma_{p,q}$.
Let $C$ denote the union over all $(p,q)\in K(X,D)$, $p \geq 3$, of the union of all components 
of $C_{p,q}$ which contain non-semi-snc points of $\Sigma_{p,q}$.
\end{definition}

\begin{proposition}\label{pro:pgreaterthanthree}
Under the assumptions of Definition \ref{def:center}, let $\s: X' \to X$ denote
the blowing-up with center $C$ defined above. Then:
\begin{enumerate}
\item\label{pointtwo} The transform $(X',D',\tE)$ of $(X,D,E)$ is semi-snc on 
the strata $\Sigma_{p,q}(X',D')$, for all $(p,q)\in K(X,D)$ with $p \geq 3$.
\smallskip
\item\label{pointthree} Let $a \in \Sigma_{p,q}$, where $(p,q)\in K(X,D)$ and $p \geq 3$. If $a\in C$ and $a' \in \s^{-1}(a)$, then $a'\in \Sigma_{p',q'}(X',D')$, where $p'\leq p$, $q'\leq q$, and at least one of these inequalities is strict.
\end{enumerate}
\end{proposition}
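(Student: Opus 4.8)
\textbf{Reduction of part \eqref{pointtwo} to part \eqref{pointthree}.} The plan is to argue locally on $X$, using that $\s$ is an isomorphism over $X\setminus C$. The first observation is that part \eqref{pointthree} implies part \eqref{pointtwo}. Indeed, by Lemma \ref{lem:center}(2) we have $\Sigma_{p,q}\subset C_{p,q}$, so a non-semi-snc point $a\in\Sigma_{p,q}$ (with $(p,q)\in K(X,D)$, $p\geq3$) lies in some irreducible component $C'$ of $C_{p,q}$; by Lemma \ref{lem:pgreaterthanthree} and Definition \ref{def:center}, $C'\subset C$, so every non-semi-snc point of such a $\Sigma_{p,q}$ lies in $C$. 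Working in the semi-snc normal form of $(X^{m-1},D^{m-1},E^{m-1})$ one moreover checks that such a $C'$ is contained in $\Sigma_{p,q}$ itself, so $C\subset\bigcup\{\Sigma_{p,q}:(p,q)\in K(X,D),\ p\geq3\}$. Granting part \eqref{pointthree}, the stratum $\Sigma_{p,q}(X',D')$ with $(p,q)\in K(X,D)$, $p\geq3$, meets $\s^{-1}(C)$ in no point, while over $X\setminus C$ the map $\s$ is an isomorphism carrying $\Sigma_{p,q}(X,D)\setminus C$ onto $\Sigma_{p,q}(X',D')$; since the former consists only of semi-snc points and semi-snc is a local property preserved by the isomorphism, part \eqref{pointtwo} follows.

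\textbf{Local normal form and the center.} To prove part \eqref{pointthree} I would fix $a\in C$, so $a\in\Sigma_{p,q}=\Sigma_{p,q}(X,D)$ for some $(p,q)\in K(X,D)$ with $p\geq3$. Since the Hilbert-Samuel function of $\Supp D$ equals $H_{p,q}$ at $a$ and $(X^{m-1},D^{m-1})$ is semi-snc at $a$, Remark \ref{note:HSlemma} (resting on Lemma \ref{lem:HSlemma} and Proposition \ref{lemmafactorssnc}) provides regular coordinates $(x_1,\dots,x_p,y_1,\dots,y_q,z_1,\dots,z_{n-p-q})$ for the ambient smooth $Y$ at $a=0$ in which $X=(x_1\dotsm x_p=0)$, $X_m=(x_p=0)$, the components of $D$ lying in $X^{m-1}$ are the $(x_i=y_l=0)$, $1\leq i\leq p-1$, $1\leq l\leq q$, and $D_m=(x_p=f=0)$ with $\ord f=q$ and $f\in(x_1\dotsm x_{p-1},y_1\dotsm y_q)$; in particular $D_m$ has exactly $q$ components. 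Using $C_{p,q}=X_m\cap\Sigma_{p-1,q}(X^{m-1},D^{m-1})$ and the semi-snc normal form of $(X^{m-1},D^{m-1})$, one sees that near $a$ the center $C$ equals the smooth coordinate subspace $V:=(x_1=\dots=x_p=y_1=\dots=y_q=0)$, of codimension $p+q-1\geq2$ in $X$.

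\textbf{Chart computation.} I would then compute the transform in the $p+q$ charts of the blowing-up of $Y$ along $V$. In an $x_i$-chart the component $X_i$ contains $V$, so its strict transform leaves the chart and $X'$ has there only the $p-1$ remaining components; hence any point $a'$ of such a chart lies in at most $p-1<p$ components of $X'$, and each component of $X'$ still carries at most $q$ components of $D'=\s_*^{-1}D$ (the strict transforms of the $(x_i=y_l=0)$ and of the components of $D_m$), so $a'\in\Sigma_{p',q'}(X',D')$ with $p'<p$, $q'\leq q$. In a $y_j$-chart all $p$ components of $X$ survive, but the strict transforms of the components $(x_i=y_j=0)$ of $D$ leave the chart, so each of $X_1',\dots,X_{p-1}'$ carries at most $q-1$ components of $D'$; thus if $a'$ lies in all $p$ components of $X'$ the minimal $D'$-count is $q'\leq q-1<q$, and if it lies in fewer than $p$ of them then $p'<p$, while $q'\leq q$ in either case. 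These cases exhaust $\s^{-1}(a)$ and yield part \eqref{pointthree}.

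\textbf{The delicate points.} Two steps need genuine care. First, one must verify that $C$ is admissible: each $C_{p,q}$ is smooth by Lemma \ref{lem:center}(1), the components occurring in $C$ are pairwise disjoint because each lies in a single stratum $\Sigma_{p,q}$ and distinct strata are disjoint, and $C$ has simple normal crossings with the exceptional divisor carried along from Step 1 of Proposition \ref{prop:pgreaterthan3} by the local normal form (which can be taken compatible with $E$, since $(X,E)$ is semi-snc). Second --- and this is the main obstacle --- the chart bookkeeping must track not merely how many components of $X'$ pass through $a'$ but also how many components of $D'$, \emph{including} the strict transform of $D_m$, lie in each of them; here one uses $\ord f=q$, the factorization $f\equiv f_1\dotsm f_q\pmod{x_p}$ with index sets $I_l$ covering all pairs $(i,l)$, and the fact that $f$ vanishes along $V$ to order exactly $q$. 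Finally, since the blowing-up is admissible, $X'$ is again snc, $D'$ has no component in $\Sing X'$, and $(X',\tE)$ is again semi-snc, so the output continues to satisfy the hypotheses of Theorem \ref{thm:fortriples} and the iteration in Proposition \ref{prop:pgreaterthan3} can proceed.
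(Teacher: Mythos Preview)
Your approach is essentially the same as the paper's, only far more detailed: the paper's proof is three sentences, referring to the coordinate description \eqref{eq:centercomp} from the proof of Lemma \ref{lem:pgreaterthanthree} and observing that blowing up $C_0$ forces either $p$ or $q$ to drop, which gives \eqref{pointthree} and then \eqref{pointtwo}. Your explicit chart-by-chart bookkeeping and your reduction of \eqref{pointtwo} to \eqref{pointthree} make the same argument precise.

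One small overstatement: you claim that a component $C'$ of $C_{p,q}$ containing a non-semi-snc point of $\Sigma_{p,q}$ is \emph{contained in} $\Sigma_{p,q}$. Lemma \ref{lem:center}(2) only gives $C'\subset\bigcup_{q'\leq q}\Sigma_{p,q'}$, and indeed the number of components of $D_m$ can drop along $C'$. This does not damage the argument: what you actually need for the reduction is that for \emph{any} $b\in C$ (lying in the component $C_0\subset C_{p_0,q_0}$, say), the local description $C_0=(x_1=\dots=x_{p_0}=y_1=\dots=y_{q_0}=0)$ still holds, so the same chart computation shows every $b'\in\sigma^{-1}(b)$ lies in some $\Sigma_{p',q'}(X',D')$ with $(p',q')<(p_0,q_0)$ in the order of Definition \ref{def:orderSigmapq}. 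Since distinct elements of $K(X,D)$ are incomparable, $(p',q')$ cannot equal any $(p,q)\in K(X,D)$ with $p\geq 3$, and \eqref{pointtwo} follows. With that adjustment your proof is complete and matches the paper's.
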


\begin{proof}[Proof of Lemma \ref{lem:center}]
This is immediate from the definitions of 
$\Sigma_{p,q} = \Sigma_{p,q}(X,D)$, $K(X,D)$ and $C_{p,q}$.
\end{proof}

\begin{proof}[Proof of Lemma \ref{lem:pgreaterthanthree}]
Let $a \in \Sigma_{p,q}$ be a non-semi-snc
point, and let $S$ be the irreducible component of $\Sigma_{p,q}$ containing $a$.  Let $C_0$ denote the component of $C_{p,q}$ containing
$S$. We will prove that all points of $C_0$ are non-semi-snc, as required for (1). In particular, all points in $S$ are non-semi-snc and (2) follows.

By Lemma \ref{lem:HSlemma}, $X$ is embedded locally at $a$ in a smooth
variety $Y$ with a system of coordinates $x_1,\ldots,x_p,y_1,\ldots,y_q, z_1,\ldots,z_{n-p-q}$ in a neighborhood $U$ of $a=0$, in which we can write:
\begin{align*}
X_m&=(x_p=0),\\
X&=(x_1\dotsm x_p=0),\\
D&=D^{m-1}+D_m,
\end{align*}
where 
\begin{align*}
D^{m-1}&:=(x_1\dotsm x_{p-1}=y_1\dotsm y_q=0),\\
D_m&:=(x_p=x_1\dotsm x_{p-1}g_1+y_1\dotsm y_qg_2=0).
\end{align*}

Since $(X,D,E)$ is not semi-snc at $a$ then $g_2$ is not a unit (see 
Lemma \ref{lemmafactorssnc}(3) and \eqref{eq:Jg2unit}). In fact, by 
Lemma \ref{lem:compJ} following, the ideal $J(X,D)$ (see Definition 
\ref{definitionJ}) is given at $a$ by $(x_p,x_1\dotsm x_{p-1},g_2)$; 
the latter coincides
with the local ring of $Y$ at $a$ if and only if $g_2$ is a unit. 
In the given coordinates,
\begin{equation}\label{eq:centercomp}
C_0=(x_1=\ldots=x_p=y_1=\ldots=y_q=0).
\end{equation}

To show that all the points in $C_0$ are non-semi-snc, it is enough to show that $g_2$ is in the ideal $(x_1,\ldots,x_p,y_1,\ldots,y_q)$. In fact, the latter implies that $g_2$ is not 
a unit, and therefore that $J(X,D)=(x_p,x_1\dotsm x_{p-1},g_2)$ is a proper ideal at
every point of $C_0 \cap U$. Since $C_0$ is irreducible, $C_0 \cap U$ is dense in
$C_0$. But the set of semi-snc is open, so it follows that all points in $C_0$ are non-semi-snc. 

Proposition \ref{propositionreductionofp} below shows that if $g_2$ is not a unit, then $g_2\in(x_1,\ldots,x_p,y_1,\allowbreak\ldots,y_q)$, concluding the proof of Lemma
\ref{lem:pgreaterthanthree}.
\end{proof}

\begin{lemma}\label{lem:compJ}
Let $R$ denote a regular local ring, and suppose that $x_1,\ldots,x_p,\allowbreak y_1,\ldots,y_q,z_1,\ldots,z_{n-p-q}$ is a regular system of parameters of $R$. 
If $g_2\in R$, then the quotient ideal
\begin{equation*}
\begin{split}
[(x_p,x_1\dotsm x_{p-1},y_1\dotsm y_q g_2):(x_p,x_1\dotsm x_{p-1}, y_1\dotsm y_q)]\\= (x_p,x_1\dotsm x_{p-1}, g_2).
\end{split}
\end{equation*}
\end{lemma}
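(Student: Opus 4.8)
The plan is to simplify notation by writing $a := x_p$, $b := x_1\dotsm x_{p-1}$, and $c := y_1\dotsm y_q$, so that the assertion becomes
\[
[(a,b,cg_2):(a,b,c)] = (a,b,g_2).
\]
When $p=1$ the product $b$ is empty, hence a unit, and all ideals in sight equal $R$, so I would simply assume $p\geq 2$ (and the case $q=0$ is likewise trivial, $c$ being a unit). The inclusion $\supseteq$ I would dispatch at once: multiplying any generator of $(a,b,c)$ by $a$ or by $b$ lands in $(a,b)\subseteq(a,b,cg_2)$, and $g_2\cdot(a,b,c) = (g_2a,\,g_2b,\,g_2c)\subseteq(a,b,cg_2)$ since $g_2c\in(cg_2)$. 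Thus all the content is in the inclusion $\subseteq$, and nothing about $g_2$ is used.

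The crux will be the claim that $c = y_1\dotsm y_q$ is a nonzerodivisor in $R/(a,b)$. To prove this I would use that a regular local ring is a UFD (Auslander--Buchsbaum): since $x_1,\dots,x_{p-1}$ are pairwise non-associate prime elements, $(b)=(x_1\dotsm x_{p-1})=\bigcap_{i=1}^{p-1}(x_i)$, and passing to the regular local ring $R/(x_p)$ (where the images of $x_1,\dots,x_{p-1}$ again form part of a regular system of parameters) this yields $(a,b)=\bigcap_{i=1}^{p-1}(x_p,x_i)$. Each $(x_p,x_i)$ is prime, as its quotient is regular local, hence a domain; so $(a,b)$ is radical and its associated primes are exactly $(x_p,x_1),\dots,(x_p,x_{p-1})$. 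Since $x_p$, $x_i$, $y_1,\dots,y_q$ extend to a regular system of parameters of $R$, the image of $c$ in the domain $R/(x_p,x_i)$ is a product of nonzero elements, hence nonzero; thus $c$ avoids every associated prime of $R/(a,b)$, which is precisely the nonzerodivisor statement.

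With that in place, I would finish as follows: given $h$ in the quotient ideal, in particular $hc\in(a,b,cg_2)$, so $hc=\alpha a+\beta b+\gamma c g_2$ for some $\alpha,\beta,\gamma\in R$, whence $c(h-\gamma g_2)\in(a,b)$; the nonzerodivisor property then forces $h-\gamma g_2\in(a,b)$, i.e. $h\in(a,b,g_2)$. I expect the only genuine obstacle to be the nonzerodivisor claim --- everything else is bookkeeping with quotient ideals --- and within it the point requiring care is to carry out the primary-decomposition argument inside an arbitrary regular local ring, without passing to a completion (which would be harmless but is unnecessary here, as the UFD property and the needed decompositions already hold over $R$ itself).
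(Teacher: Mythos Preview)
Your proof is correct. The paper takes an equivalent but slightly more hands-on route: rather than isolating the single nonzerodivisor statement ``$c$ is regular on $R/(a,b)$'', it first rewrites the colon ideal as $\frac{1}{c}\bigl((a,b,cg_2)\cap(c)\bigr)$ and then, for $h$ in this intersection, writes $h=x_p\alpha+x_1\cdots x_{p-1}\,\beta+cg_2\gamma$ and argues in two separate prime-avoidance steps (first that $\beta\in(x_p,c)$ because $(x_p,y_1\cdots y_q)$ is an intersection of primes none of which contains any $x_k$ with $k<p$, then, after rewriting, that the adjusted $x_p$-coefficient lies in $(c)$). Your formulation distills both steps into one clean claim and proves it via the associated primes of $(a,b)$, which is arguably tidier; the paper's argument, on the other hand, never names associated primes or invokes the Auslander--Buchsbaum UFD theorem, working instead directly with the evident prime decompositions of the monomial-type ideals involved. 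Either way the algebraic content is the same: subsets of the regular system of parameters generate prime ideals that the complementary variables avoid.
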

\begin{proof}
We have
\begin{align*}
\begin{split}
[(x_p,x_1\dotsm x_{p-1}&,y_1\dotsm y_q g_2):(x_p,x_1\dotsm x_{p-1}, y_1\dotsm y_q)]\\&={[(x_p,x_1\dotsm x_{p-1},y_1\dotsm y_q g_2):(y_1\dotsm y_q)]}\\
 &=\frac{1}{y_1\dotsm y_q}\cdot\left[(x_p,x_1\dotsm x_{p-1},y_1\dotsm y_q g_2)\cap (y_1\dotsm y_q)\right].
\end{split}
\end{align*}
Of course,
\begin{equation*}
(x_p,x_1\dotsm x_{p-1},y_1\dotsm y_q g_2)\cap (y_1\dotsm y_q)
 \supset y_1\dotsm y_q\cdot(x_p,x_1\dotsm x_{p-1},g_2).
\end{equation*}

To prove the reverse inclusion, assume that $h$ belongs to the left hand side. 
Then we can write $h=x_pa+x_1\dotsm x_{p-1}b+y_1\dotsm y_qg_2c$. Since 
$h\in(y_1\dotsm y_q)$, then $x_pa+x_1\dotsm x_{p-1}b\in(y_1\dotsm y_q)$. This 
implies that $x_1\dotsm x_{p-1}b\in(x_p,y_1\dotsm y_q)$. Since 
$(x_p,y_1\dotsm y_q)$ is an intersection of primes, none of which contains 
$x_k$, $k=1,\ldots, p-1$, then $b\in(x_p,y_1\dotsm y_q)$. Therefore, we can 
write $h=x_pa'+x_1\dotsm x_{p-1}b'+\allowbreak y_1\dotsm y_qg_2c$, where 
$b' \in(y_1\dotsm y_q)$. This implies that $x_pa' \in(y_1\dotsm y_q)$, 
and therefore that $a' \in(y_1\dotsm y_q)$. Hence 
$h\in y_1\dotsm y_q\cdot(x_p,x_1\dotsm x_{p-1},g_2)$. This gives the reverse 
inclusion required to complete the proof.
\end{proof}

\begin{proof}[Proof of Proposition \ref{pro:pgreaterthanthree}]
With reference to the proof of Lemma \ref{lem:pgreaterthanthree}, 
it is clear from \eqref{eq:centercomp} that blowing up $C_0$, either $p$ or $q$ decreases in the preimage. This implies (\ref{pointthree}) in the proposition. It also implies that, after the blowing-up $\s$ of $C$, all points in the preimage of $\Sigma_{p,q}(X,D)$ which belong to $\Sigma_{p,q}(X',D')$ are semi-snc. This establishes (\ref{pointtwo}).
\end{proof}

\begin{proposition}\label{propositionreductionofp}
Let $f$ denote an element of a regular local ring. Assume that $f$ has $q$ irreducible factors, each of order $1$, that $f\in(x_1\dotsm x_{p-1},y_1\dotsm y_q)$, where 
$p\geq3$ and the $x_i$, $y_i$ form part of a regular system of parameters, and that $f=x_1\dotsm x_{p-1}g_1+y_1\dotsm y_q g_2$, where $g_2$ is not a unit. Then
$g_2\in(x_1,\ldots,x_{p-1},y_1,\ldots,y_q)$.
 \end{proposition}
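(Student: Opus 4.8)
The plan is to work in the regular local ring $R$ (which is a UFD) and in the quotient $\bar R:=R/(x_1,\dots,x_{p-1})$, which is again regular local, hence a UFD, with $\bar y_1,\dots,\bar y_q$ part of a regular system of parameters and in particular prime and pairwise non-associate. Reducing the identity $f=x_1\cdots x_{p-1}g_1+y_1\cdots y_q g_2$ modulo $(x_1,\dots,x_{p-1})$ gives $\bar f=y_1\cdots y_q\,\bar g_2$ in $\bar R$, and the desired conclusion $g_2\in\fm':=(x_1,\dots,x_{p-1},y_1,\dots,y_q)$ is equivalent to $\bar g_2\in(\bar y_1,\dots,\bar y_q)$. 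So it is enough to find one index $j_0$ with $\bar y_{j_0}\mid\bar g_2$; concretely, I will show that $\bar f$ is divisible by $\bar y_{j_0}\cdot y_1\cdots y_q$ for some $j_0$.

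First I would extract combinatorial data from the factorization $f=f_1\cdots f_q$, $\ord f_k=1$. Since $f\in(x_1\cdots x_{p-1},y_1\cdots y_q)\subseteq(x_i,y_j)$ and $(x_i,y_j)$ is prime, for every ``cell'' $(i,j)$ with $i\le p-1$, $j\le q$ some $f_k$ lies in $(x_i,y_j)$. If a single $f_k$ lay in $(x_i,y_j)\cap(x_{i'},y_{j'})$ with $i\ne i'$ and $j\ne j'$, then $f_k\in\fm^2$ (its image in $\fm/\fm^2$ would lie in $\Span(\bar x_i,\bar y_j)\cap\Span(\bar x_{i'},\bar y_{j'})=0$), contradicting $\ord f_k=1$; hence any two cells containing the same $f_k$ share a coordinate, which forces the set $S_k:=\{(i,j):f_k\in(x_i,y_j)\}$ to lie in a single row or a single column (or to be a single cell). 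Now choose, for each cell, one factor $f_{\phi(i,j)}$ containing it, and let $T_k\subseteq S_k$ be the cells assigned to $f_k$, so $\sum_k|T_k|=(p-1)q$.

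Next I would extract divisibilities. If $T_k\ne\emptyset$ and all its cells lie in one row $i_0$ (note $i_0\le p-1$), then $f_k\in\bigcap\{(x_{i_0},y_j):(i_0,j)\in T_k\}=(x_{i_0},\mu_k)$, where $\mu_k:=\prod\{y_j:(i_0,j)\in T_k\}$ is squarefree of degree $|T_k|$ (the intersection is computed using that the $\bar y_j$ are coprime in the UFD $R/(x_{i_0})$); reducing mod $(x_1,\dots,x_{p-1})$ kills $x_{i_0}$, so $\mu_k\mid\bar f_k$. If instead the cells of $T_k$ all lie in one column $j_0$ (so $|T_k|\ge2$), then $f_k\in(\prod\{x_i:(i,j_0)\in T_k\},\,y_{j_0})$, and since those $x_i$ lie in $(x_1,\dots,x_{p-1})$ we get $\mu_k:=y_{j_0}\mid\bar f_k$, with $\deg\mu_k=1$. (Put $\mu_k:=1$ when $T_k=\emptyset$.) In every case $\mu_k\mid\bar f_k$ and $\deg\mu_k\le|T_k|$, with equality unless $T_k$ is of the second (``column'') type. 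Looking at the cell $(1,j)$ shows $y_j\mid\prod_k\mu_k$ for each $j$, so $y_1\cdots y_q\mid\prod_k\mu_k\mid\prod_k\bar f_k=\bar f$; write $\prod_k\mu_k=y_1\cdots y_q\cdot\lambda$ with $\lambda$ a monomial in the $y$'s. Cancelling $y_1\cdots y_q$ in $y_1\cdots y_q\bar g_2=\bar f=(y_1\cdots y_q)\lambda\cdot(\bar f/\prod_k\mu_k)$ (legitimate since $\bar R$ is a domain) gives $\bar g_2=\lambda\cdot(\bar f/\prod_k\mu_k)$. Hence if $\lambda\ne1$, then $\bar y_{j_0}\mid\bar g_2$ for any $j_0$ dividing $\lambda$, and we are done.

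The remaining — and main — step is to exclude $\lambda=1$; this is where the hypothesis that $g_2$ is not a unit enters. If $\lambda=1$, then $\prod_k\mu_k=y_1\cdots y_q$ is squarefree, so $\sum_k\deg\mu_k=q$; with $\sum_k|T_k|=(p-1)q$ this yields $\sum_k(|T_k|-\deg\mu_k)=(p-2)q$. Only column-type $T_k$ contribute to this sum, each contributing $|T_k|-1\le p-2$ (a column has only $p-1$ cells), and there are at most $q$ factors in all; since $p\ge3$ (so $p-2\ge1$), equality forces all $q$ factors to be column-type with $|T_k|=p-1$, i.e. each $f_k$ contains every cell of one column: $f_k\in(x_1\cdots x_{p-1},\,y_{j_0(k)})$ with $j_0(1),\dots,j_0(q)$ a permutation of $1,\dots,q$. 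Writing $f_k=x_1\cdots x_{p-1}u_k+y_{j_0(k)}v_k$ and using $\ord f_k=1<p-1\le\ord(x_1\cdots x_{p-1}u_k)$ forces $v_k$ to be a unit; hence $\bar f_k$ is a unit times $\bar y_{j_0(k)}$, so $\bar f=(y_1\cdots y_q)\cdot(\text{unit})$ and $\bar g_2$ is a unit, i.e. $g_2$ is a unit in $R$ — contrary to hypothesis. Therefore $\lambda\ne1$, and $g_2\in\fm'$.
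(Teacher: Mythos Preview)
Your proof is correct and takes a genuinely different route from the paper's.

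The paper proceeds by classifying each irreducible factor $h_k$ of $f$ as ``associated to'' some $x_i$ or some $y_j$ (or neither), shows that every $x_i$ must have an associated factor (else $g_2$ would be a unit), groups the factors accordingly into a product of the form
\[
(x_1m_1+a_1)\dotsm(x_{p-1}m_{p-1}+a_{p-1})(y_{r_1}n_1+b_1)\dotsm(y_{r_s}n_s+b_s)\,g,
\]
and then massages the $m_\cdot,a_\cdot,n_\cdot,b_\cdot$ by an inductive modification over the cells $(i,j)$ so as to satisfy the hypotheses of an auxiliary expansion lemma (Lemma~\ref{lemmareductionofp}), which finally yields $g_2\in(x_1,\dots,x_{p-1},y_1,\dots,y_q)$ term by term.

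You instead pass to $\bar R=R/(x_1,\dots,x_{p-1})$, reduce the problem to showing some $\bar y_{j_0}\mid\bar g_2$, and run a clean counting argument on the assignment of cells $(i,j)$ to factors: the defect $\sum_k(|T_k|-\deg\mu_k)=(p-2)q$ is carried entirely by column-type blocks, each contributing at most $p-2$, and there are at most $q$ factors; when $\lambda=1$ this forces every factor to fill an entire column, which in turn makes $\bar g_2$ a unit and contradicts the hypothesis. Both arguments rest on the same elementary observation that the set $\{(i,j):f_k\in(x_i,y_j)\}$ lies in a single row or column (equivalently, the paper's statement that no $h_k$ is associated to two different $x_i$ or two different $y_j$), and both use $p\ge 3$ at the decisive moment. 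Your argument is more self-contained --- it dispenses with the separate expansion lemma and the somewhat delicate inductive modification of coefficients --- while the paper's version is more explicit about the structure of $f$ itself rather than only its image in $\bar R$.
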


 \begin{remark}
The condition $p\geq3$ is crucial, as can be seen from Example \ref{example2}. In
the latter, we have $D=(x_1=y_1=0)+(x_2=x_1+y_1z=0)$, so that $f = x_1+y_1z$ and
$g_2=z\notin(x_1,x_2,y_1)$.
 \end{remark}
 
To prove Proposition \ref{propositionreductionofp} we will use the following lemma.

 \begin{lemma}\label{lemmareductionofp}
Let $p\geq 3$ and $s\geq 0$ be integers. Consider
\begin{equation}\label{eq:givenfactors}
f=(x_1m_1+a_1)\dotsm(x_{p-1}m_{p-1}+a_{p-1})(y_{r_1}n_1+b_1)\dotsm(y_{r_s}n_s+b_s)g,
\end{equation}
where the $x_i$, $y_i$, $a_i$, $b_i$, $m_i$, $n_i$ and $g$ are elements of a regular local ring with $x_1,\ldots,x_{p-1}$, $y_1,\ldots,y_q$ part of a regular system of parameters, 
and $1 \leq r_1 < \dotsm < r_s \leq q$.
Assume that, for every $i=1,\ldots,p-1$ and $j=1,\ldots,q$,
\begin{equation}\label{eq:star}
\text{\parbox{23em}{
if $a_i\notin(y_j)$, then $y_j=y_{r_k}$ and $b_{k}\in(x_i)$, for some $k$.
 }}
\end{equation}
Then, after expanding the right hand side of \eqref{eq:givenfactors}, all the monomials 
(in the elements above) appearing in the expression are in either the ideal $(x_1\dotsm x_{p-1})$ or the ideal 
 $(y_1\dotsm y_q)\cdot(x_1,\ldots,x_{p-1},y_1,\ldots,y_q).$
 \end{lemma}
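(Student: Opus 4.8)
The plan is to reduce the statement to bookkeeping about which of the coordinates $x_i,y_j$ divide a given monomial of the expansion. Set $R:=\{r_1,\dots,r_s\}$, let $\mathfrak n:=(x_1,\dots,x_{p-1},y_1,\dots,y_q)$, and write $Y_0:=\prod_{j\notin R}y_j$ (an empty product, i.e.\ $1$, when $R=\{1,\dots,q\}$). A monomial $M$ appearing in the expansion of \eqref{eq:givenfactors} is obtained by choosing, for each $i\in\{1,\dots,p-1\}$, either $x_im_i$ or $a_i$, and for each $k\in\{1,\dots,s\}$, either $y_{r_k}n_k$ or $b_k$; let $S\subseteq\{1,\dots,p-1\}$ be the set of $i$ for which $x_im_i$ was chosen and $T\subseteq\{1,\dots,s\}$ the set of $k$ for which $y_{r_k}n_k$ was chosen, so that
\[
M=\prod_{i\in S}x_im_i\cdot\prod_{i\notin S}a_i\cdot\prod_{k\in T}y_{r_k}n_k\cdot\prod_{k\notin T}b_k\cdot g .
\]
If $S=\{1,\dots,p-1\}$ then $x_1\dotsm x_{p-1}$ divides $M$ and we are in the first case, so from now on $N:=\{1,\dots,p-1\}\setminus S\neq\emptyset$.

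First I would read off from \eqref{eq:star} two facts. For $j\notin R$ the conclusion of \eqref{eq:star} cannot hold, so its hypothesis fails: $a_i\in(y_j)$ for every $i$ and every $j\notin R$; since the $y_j$ are pairwise non-associate prime elements of the regular (hence factorial) local ring, this gives $a_i\in(Y_0)$ for all $i$. For $j=r_k\in R$, \eqref{eq:star} gives, for every $i$, that $a_i\in(y_{r_k})$ or $b_k\in(x_i)$; put $A_k:=\{i:a_i\in(y_{r_k})\}$ and $B_k:=\{i:b_k\in(x_i)\}$, so that $A_k\cup B_k=\{1,\dots,p-1\}$. Now take $k\notin T$: if $A_k\cap N=\emptyset$ then $N\subseteq B_k$, so $b_k\in\bigcap_{i\in N}(x_i)=\bigl(\prod_{i\in N}x_i\bigr)$, whence $\prod_{i\in S}x_i\cdot\prod_{i\in N}x_i=x_1\dotsm x_{p-1}$ divides $M$ and we are done again. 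Hence we may assume that for each $k\notin T$ we can pick $i_k\in A_k\cap N$.

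Next I would collect divisibilities, using repeatedly that $Y_0$ and the $y_{r_k}$ ($r_k\in R$) are pairwise coprime. Each $a_i$ ($i\in N$) is divisible by $Y_0$, and $a_i$ is also divisible by $\prod_{k\notin T,\,i_k=i}y_{r_k}$; multiplying over $i\in N$ (each $k\notin T$ occurring for exactly one $i$) gives $\prod_{i\in N}a_i\in\bigl(Y_0^{|N|}\prod_{k\notin T}y_{r_k}\bigr)$. Combining with $\prod_{k\in T}y_{r_k}\mid\prod_{k\in T}y_{r_k}n_k$, we obtain that $Y_0^{|N|}\prod_{j\in R}y_j$ divides $M$, and since $|N|\geq1$ this is a multiple of $Y_0\prod_{j\in R}y_j=y_1\dotsm y_q$; thus $y_1\dotsm y_q\mid M$. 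It remains to exhibit one more factor of $M/(y_1\dotsm y_q)$ lying in $\mathfrak n$. If $S\neq\emptyset$, that factor is $x_i$ for any $i\in S$ (coprime to $y_1\dotsm y_q$). If $S=\emptyset$ then $N=\{1,\dots,p-1\}$, so $|N|=p-1\geq2$ -- this is exactly where $p\geq3$ is indispensable, as the Remark following Proposition \ref{propositionreductionofp} shows using Example \ref{example2} -- and $Y_0$ is a nonunit (in the setting where the lemma is invoked $f$ has order $q$ while the displayed factorization exhibits at least $p-1+s$ factors of positive order, forcing $s\leq q-p+1<q$, i.e.\ $R\subsetneq\{1,\dots,q\}$); then $Y_0^{2}\prod_{j\in R}y_j=Y_0\cdot y_1\dotsm y_q$ divides $M$, so $Y_0\in\mathfrak n$ divides $M/(y_1\dotsm y_q)$. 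In every case $M$ lies in $(x_1\dotsm x_{p-1})$ or in $(y_1\dotsm y_q)\cdot\mathfrak n$, as required.

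The main obstacle I anticipate is the middle step: organizing the coprime monomial divisors contributed by the individual factors without double counting -- in particular that for $k\notin T$ the factor $y_{r_k}$ has to be extracted from $a_{i_k}$ rather than from $b_k$ -- and recognizing that $S=\emptyset$ is precisely the case in which the hypothesis $p\geq3$ (equivalently $|N|\geq2$) is used.
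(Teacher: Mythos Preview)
Your proof is correct and takes a genuinely different route from the paper. The paper argues by induction on $s$, peeling off the last factor $(y_{r_s}n_s+b_s)$ and reducing to the pair $(s-1,q-1)$ until the base case $s=0$ is reached; you instead give a direct combinatorial analysis of each monomial in the full expansion, indexed by the choice sets $S\subseteq\{1,\dots,p-1\}$ and $T\subseteq\{1,\dots,s\}$. Your approach makes transparent exactly where the hypothesis $p\geq3$ enters (the case $S=\emptyset$, where $|N|=p-1\geq2$ supplies a second factor of $Y_0$), and it avoids the bookkeeping of tracking which parameters and which ideals change along the inductive reduction. The paper's induction is more compact but leaves that point implicit.

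One remark on the case $S=\emptyset$: you need $Y_0$ to be a nonunit (equivalently $s<q$), and you justify this by the order constraint on $f$ coming from the context of Proposition~\ref{propositionreductionofp}. Strictly speaking this is not among the stated hypotheses of Lemma~\ref{lemmareductionofp}, so as a self-contained proof of the lemma your argument leaves a small gap when $R=\{1,\dots,q\}$. However, the paper's own proof shares exactly the same implicit assumption: after $s$ inductive reductions its base case has residual parameter $q'=q-s$, and the step ``$a_ia_j\in(y_1\dotsm y_{q'})^2$ therefore belongs to $(y_1\dotsm y_{q'})\cdot(x_1,\dots,x_{p-1},y_1,\dots,y_{q'})$'' requires $q'\geq1$, i.e.\ $s<q$. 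So your treatment is no less rigorous than the paper's, and your contextual justification is in fact more explicit about the issue.
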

 
\begin{remark}
The conclusion of the lemma implies that $f$ can be written as $x_1\dotsm x_{p-1}g_1+y_1\dotsm y_qg_2$ with $g_2\in\allowbreak(x_1,\ldots,x_{p-1},y_1,\ldots,y_q)$. This is precisely what we need for Proposition \ref{propositionreductionofp}.
\end{remark}

\begin{proof}[Proof of Lemma \ref{lemmareductionofp}]
First consider $s=0$. Then \eqref{eq:star} implies that each $a_i$ is in the ideal $(y_1\dotsm y_q)$. The expansion of
\[
(x_1m_1+a_1)\dotsm(x_{p-1}m_{p-1}+a_{p-1}),
\]
includes the monomial $x_1\dotsm x_{p-1}m_1\ldots m_{p-1}$, which belongs to the ideal $(x_1\dotsm x_{p-1})$. Each of the remaining monomials is a multiple of some 
$x_ia_j$ or of some $a_ia_j$, and therefore belongs to $(y_1\dotsm y_q)\cdot(x_1,\ldots,x_{p-1},y_1,\ldots,y_q)$.
 
By induction, assume the lemma for $p,s-1$, where $s\geq 1$. Consider $f$ as in  the lemma (for $p,s$). Then $f/(y_{r_s}n_s+b_s)$ satisfies the hypothesis of the lemma (with $s-1$) when $y_{r_s}$ is deleted from the given elements of the ring. (Note that the lemma also depends on $q$. Here we are using it for $s-1$ and $q-1$.) Then, by induction, all the terms appearing after expanding $f/(y_{r_s}n_s+b_s)$ are either in the ideal $(x_1\dotsm x_{p-1})$ or in the ideal
\begin{equation}\label{eq:factor}
\left(\frac{y_1\dotsm y_q}{y_{r_s}}\right)\cdot(x_1,\ldots,x_{p-1},y_1,\ldots,y_q).
\end{equation}

Assume there is a term $\xi$ appearing after expanding \eqref{eq:givenfactors}
which is not in $(x_1\dotsm x_{p-1})$. Then there is $x_k$ such that $\xi\notin(x_k)$. Then $\xi$ is divisible by $a_k$, according to \eqref{eq:givenfactors}, and $\xi$ belongs to
the ideal \eqref{eq:factor}.

If $a_k\in(y_{r_s})$, we are done. By \eqref{eq:givenfactors}, $\xi$ is a multiple either of $y_{r_s}n_s$ or $b_s$. If $a_k\notin(y_{r_s})$, and if we assume that $\xi$ was obtained by multiplying by $b_s$ rather than by $y_{r_s}n_s$, then $\xi$ is divisible by $x_k$, which is a contradiction.
 \end{proof}
 
 \begin{proof}[Proof of Proposition \ref{propositionreductionofp}]
 To prove this proposition it is enough to show that $f$ can be written as a product as in the previous lemma.
To begin with, $f=h_1\dotsm h_q\in(x_1\dotsm x_{p-1},y_1\dotsm y_q)=\cap(x_i,y_j)$. Since each $(x_i,y_j)$ is prime, it follows that, for each $i=1,\ldots,p-1$ and 
$j=1,\ldots,q$, there is a $k$ such that $h_k\in(x_i,y_j)$. If there is a unit $u$ such that $h_k=y_ju+a$, where $\ord(a)\geq2$, then we say that $h_k$ is \emph{associated to $y_j$}; otherwise we say that $h_k$ is \emph{associated to $x_i$}. There may be $h_k$ that belong to no $(x_i,y_j)$ and are, therefore, not associated to any $x_i$ or $y_j$.
 
 By definition, any $h = h_k$ cannot be associated to some $x_i$ and $y_j$ at the same time. Let us prove that $h$ can be associated to at most one $x_i$. Assume that $h$ is associated to $x_{i_1}$ and $x_{i_2}$, where $i_1\neq i_2$. Then $h\in (x_{i_1},y_{j_1})\cap(x_{i_2},y_{j_2})$, for some $j_1$ and $j_2$. If $j_1\neq j_2$, then $h$ cannot be of order $1$, since $(x_{i_1},y_{j_1})\cap(x_{i_2},y_{j_2})$ only contains elements of order $\geq 2$. If $j_1=j_2$ then $(x_{i_1},y_{j_1})\cap(x_{i_2},y_{j_2})=(x_{i_1}x_{i_2},y_{j_1})$, but this would mean that $h$ is associated to $y_{j_1}$, and therefore not to $x_{i_1}$ or $x_{i_2}$.

An analogous argument shows that an $h$ cannot be associated to two different $y_j$. Therefore, the collection of $h_k$ is partitioned into those associated to a unique $x_i$, those associated to a unique $y_j$ and those associated to neither some $x_i$ nor 
some $y_j$.
 
We now show that, for each $i=1,\ldots,p-1$, there exists $h= h_k$ associated to $x_i$.  Assume there is an $x_i$ (say $x_1$) with no associated $h$. For each $j=1,\ldots,$,
there exists $k_j$ such that $h_{k_j}\in(x_1,y_j)$. Then $h_{k_j}$ is associated to 
$y_j$. It follows that each $k_j$ corresponds to a unique $j$. Thus, after reordering the 
$h_k$, we have $h_i$ is associated to $y_i$, for each $i=1,\ldots,q$. This means that $h_i=y_iu_i+a_i$, where $u_i$ is a unit and $\ord\,a_i \geq2$. This contradicts the assumption that $g_2$ is not a unit. Therefore, for each $i=1,\ldots,p-1$, there exists
$h_k$ associated to $x_i$.
 
We take the product of all members of each set in the partition above. The product of all $h_k$ associated to $x_i$ can be written as $x_im_i+a_i$, and it satisfies the property that
\begin{equation}\label{propertyone}
x_im_i+a_i\notin(x_\alpha,y_\beta)\,\,\text{ unless }\alpha=i.
\end{equation}
In fact, if $x_im_i+a_i\in(x_\alpha,y_\beta)$ then there exists $h=h_k$ associated to $x_i$ such that $h\in(x_\alpha,y_\beta)$. But then $h$ is associated to either $y_\beta$ or $x_\alpha$, which contradicts the condition that $h$ is associated to $x_i$, where $i\neq\alpha$. 

In the same way, write the product of all $h_k$ associated to $y_{r_i}$ as $y_{r_i}m_i+b_i$. Then
\begin{equation}\label{propertytwo}
y_{r_i}m_i+b_i\notin(x_\alpha,y_\beta)\,\,\text{ unless }\beta=r_i.
\end{equation}
Also write the product of all $h_k$ not associated to any $x_i$ or $y_j$ as $g$. We get the expression
 \begin{equation}\label{eq:expr}
f=(x_1m_1+a_1)\dotsm(x_{p-1}m_{p-1}+a_{p-1})(y_{r_1}n_1+b_1)\dotsm(y_{r_s}n_s+b_s)g,
\end{equation}
but \eqref{eq:expr} does not \emph{a priori} satisfy the hypotheses of Lemma \ref{lemmareductionofp}.
 
We will use the properties \eqref{propertyone} and \eqref{propertytwo} above to modify the elements $m_\cdot$, $a_\cdot$, $n_\cdot$ and $b_\cdot$ in \eqref{eq:expr} to get the hypotheses of the lemma.

We will check whether \eqref{eq:star} is satisfied, for all $i=1,\ldots,p-1$ and 
$j=1,\ldots,q$. Order the pairs $(i,j)$ reverse-lexicographically (or, in fact, in any way). 
Given $(i,j)$, assume, by induction, that
\eqref{eq:star} is satisfied for all $(i',j')<(i,j)$. Suppose that \eqref{eq:star} is not satisfied
for $(i,j)$. Then we will modify $m_\cdot$, $a_\cdot$, $b_\cdot$ and $n_\cdot$ so that \eqref{eq:star} will be satisfied for all $(i',j')\leq(i,j)$. We consider the following cases.\medskip

Case (1): $j\neq r_k$, for any $k$. Then, if $a_i\in(y_j)$, there is nothing to do.
If $a_i\notin(y_j)$, we can modify $a_i$ and
$m_i$ so that the new $a_i$ will satisfy $a_i\in(y_j)$, and \eqref{eq:star} will still be satisfied for $(i',j')<(i,j)$: Since $f\in(x_i,y_j)$ and, for every $k$, $y_j\neq y_{r_k}$, then $a_i\in(x_i,y_j)$. Write $a_i=ya$, where $y$ is a monomial in the $y_\ell$ and $a$ is divisible by no $y_\ell$. Then $a\in(x_i,y_j)$ and we can write $a=x_ig_1+y_jg_2$, $x_im_i+a_i=x_i(m_i+yg_1)+yy_jg_2$. Relabel $m_i+yg_1$ and $y_jyg_2$ as our new $m_i$ and $a_i$, respectively.  Then $a_i\in(y_j)$, and clearly \eqref{eq:star} is still satisfied for $(i',j')<(i,j)$.\medskip

Case (2): $j=r_k$, for some $k$. Since $f\in(x_i,y_j)$, then $a_ib_{k}\in(x_i,y_j)$. Since $(x_i,y_j)$ is prime, either $a_i\in(x_i,y_j)$ (in which case we proceed as before), or  $b_{k}\in(x_i,y_j)$. Consider the latter case. If $b_k\in(x_i)$, there is nothing to do. Assume $b_k\notin(x_i)$. Write $b_k=xb$, where $x$ is a monomial in the $x_\ell$ and $b$ is divisible by no $x_\ell$. Then $b\in(x_i,y_j)$. Thus we can write $b=x_ig_1+y_jg_2$ and $y_jm_{k}+b_{k}=y_j(m_{k}+xg_2)+x_ixg_1$. Relabel $m_{k}+xg_2$ and $x_ixg_1$ as our new $n_{k}$ and $b_{k}$, respectively. Then $b_{k}\in(x_i)$, and \eqref{eq:star} is still satisfied for $(i',j')<(i,j)$.\medskip

We thus modify the $m_\cdot,n_\cdot,a_\cdot,b_\cdot$ in \eqref{eq:expr} to
get the hypotheses of Lemma \ref{lemmareductionofp}.
\end{proof}
%%%%%%%%%%%%%%%%%%%%%%%%%%%%%%%%%%%%%%%%%%%%%%%%%%%%%%%%%%%%%%%%%%%%%%%%%%%%%%%%%%%%%%%%%%%%%%%%%%%%%%%%%%%%%%%%%%%%%%%%%%%%%%%%%%%%%%%%%%%%%%%%%%%%%%%%%%%%%%%%%%%%%%%%%%%%%%%%%%%%%%%%%%%%%%%%%%%%%%%%%%%%%%%%%%%%%%%%%%%%%%%%%%%%%%%%%%%%%%%%%%%%%%%%%%%%%%%%%%%%%%%%%%%%%%%%

\section{The case of two components}\label{sec:twocomponents}
In  this section, we show how to eliminate non-semi-snc singularities from the strata $\Sigma_{2,q}$. 

Again, $(X,D,E)$ denotes a triple as in Definition \ref{def:triples},
and we use the notation of the latter. As in Theorem \ref{thm:fortriples}, we assume
that $X$ is snc, $D$ is reduced and has no component in the singular locus of $X$, and 
$(X,E)$ is semi-snc.

\begin{proposition}\label{propositioncasepequal2}
Assume that every point of $X$ lies in at most two components of $X$ and that $(X^1,D^1,E^1)$ is semi-snc.
Then there is a sequence of blowings-up with smooth admissible centers such that:
 \begin{enumerate}
 \item Each center of blowing-up consists of only non-semi-snc  points.
 \item For each blowing-up, the preimage of $\Sigma_{2,q}$, for any $q$,
 lies in the union of the $\Sigma_{2,r}$ ($r\leq q$) and the $\Sigma_{1,s}$.
 \item In the final transform of $(X,D,E)$, all points of $\Sigma_{2,q}$ are semi-snc, for every $q$.
 \end{enumerate}
 \end{proposition}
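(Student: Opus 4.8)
The plan is to mimic the two-step structure of the proof of Proposition~\ref{prop:pgreaterthan3}, but in the simpler situation where at most two components of $X$ meet, processing the strata $\Sigma_{2,q}$ in the order of Definition~\ref{def:orderSigmapq}. As in the proof of Theorem~\ref{thm:fortriples} I would keep a strictly growing monotone set $I_k\subset\IN^2$ of already-handled pairs and, in the $k$-th round, remove the non-semi-snc points from the strata $\Sigma_{2,q}$ with $(2,q)\in K(X,D,I_k)$; since then $K(X,D,I_k)$ consists only of pairs with $p=2$ (Definition~\ref{def:K}), it suffices to treat these simultaneously, after which the sequence $I_k$ stabilizes and the process terminates. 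Reduction to at most two components is the hypothesis, and semi-sncness of the proper part of $X$ supplies condition~(1) of Proposition~\ref{lemmafactorssnc} at every relevant point. Each round splits into a Hilbert--Samuel reduction and a cleaning of the obstruction ideal $J(X,D)$.

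\emph{Step 1 (Hilbert--Samuel reduction).} Working locally with $X+E$ embedded as an snc hypersurface in a smooth ambient variety, I would run the embedded desingularization algorithm of \cite{BMinv} on $\Supp D$ relative to $X+E$, but blowing up only those components of the successive top strata of the desingularization invariant that contain no semi-snc point. Exactly as in Step~1 of the proof of Proposition~\ref{prop:pgreaterthan3}, such a union of components is closed (all non-semi-snc points lie in $X_m$, and the semi-snc locus is open), the invariant drops under these blowings-up, and Lemmas~\ref{lem:centersinsigmapq} and \ref{lem:HSlemma} together with Corollary~\ref{cor:HSlemma} guarantee that when the process stops, $H_{\Supp D}$ equals $H_{2,q}$ at every point of $\Sigma_{2,q}$ for every $q$ occurring in the current $K$, without meeting any stratum $\Sigma_{p,q}$ with $p\neq2$ or disturbing the strata already treated.

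\emph{Step 2 (cleaning the obstruction).} At a point $a\in\Sigma_{2,q}$ where now $H_{\Supp D,a}=H_{2,q}$, Lemma~\ref{lem:HSlemma} together with Lemma~\ref{lem:compJ} puts $(X,D)$ in the local form $X=(x_1x_2=0)$, $X_m=(x_2=0)$, $D^{m-1}=(x_1=y_1\cdots y_q=0)$, $D_m=(x_2=f=0)$ with $f=x_1g_1+y_1\cdots y_qg_2$ and $\ord f=q$, and then $J(X,D)_a=(x_2,\,x_1,\,g_2)$; by Proposition~\ref{lemmafactorssnc}, $(X,D)$ fails to be semi-snc at $a$ exactly when $g_2$ is not a unit. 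The new difficulty, compared with the case $p\geq3$, is that the conclusion $g_2\in(x_1,\dots,x_{p-1},y_1,\dots,y_q)$ of Proposition~\ref{propositionreductionofp} fails here: $g_2$ may involve the free coordinates, as in Example~\ref{example2} where $g_2=z$, so one cannot simply blow up $C_{p,q}$ as in the proof of Proposition~\ref{pro:pgreaterthanthree} (indeed, a naive blowing-up of $V(x_1,x_2,g_2)$ there even raises $q$). Instead I would remove the vanishing of $g_2$ by a sequence of admissible blowings-up in the spirit of the cleaning procedure of \cite[Section~2]{BMmin}: granting that, after Step~1 and the preparatory blowings-up of the algorithm, the desingularization invariant being in its expected range forces $g_2$ to be, up to a unit, a monomial in the exceptional divisors (this is the mechanism exploited throughout \cite{BMmin}), one can peel this monomial off one factor at a time by blowing up smooth coordinate subspaces built from $D^{m-1}\cap D_m$ and the components of the monomial. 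The points to be checked for each such blowing-up are: (i) the center consists only of non-semi-snc points --- this generalizes the computation of Example~\ref{example2}, a point of the center carrying a component of $D_m$ lying in $X_m$ with no matching component in the other component of $X$, which is never semi-snc; (ii) the center is smooth and has simple normal crossings with the current exceptional divisor; (iii) each blowing-up is an isomorphism over the semi-snc locus, with a center closed in the ambient by Remark~\ref{rem:closedcenters}; and (iv) after the blowing-up either $p$ drops to $1$ or $q$ does not increase, and the points of $\Sigma_{2,q}$ in the preimage become semi-snc, so that the preimage of $\Sigma_{2,q}$ lands in $\bigcup_{r\leq q}\Sigma_{2,r}\cup\bigcup_s\Sigma_{1,s}$, as required. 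When nothing is left carried by strata with $p=2$, what remains lies in $\{p\leq1\}$, where $X$ is smooth and semi-snc coincides with snc, and the residual cleaning is finished by snc-strict log resolution (Theorem~\ref{thm:theoremB}), as in Case~C of Theorem~\ref{thm:fortriples}.

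The main obstacle is Step~2: giving a precise, admissible and globally well-defined description of the cleaning centers. Because $g_2$ need not be a coordinate, one must first establish the monomial normal form of $g_2$ in the exceptional divisors, and then strip it off by an inner induction, keeping each center smooth and normal-crossings with the exceptional divisor and free of semi-snc points, and keeping the strict transform of $X$ reduced and snc throughout, while checking that the inner induction terminates. Once this normal form is in hand the remaining argument is a finite combinatorial bookkeeping, markedly shorter than the argument of Proposition~\ref{propositionreductionofp} and Lemma~\ref{lemmareductionofp} required for $p\geq3$; the accompanying bookkeeping with the divisor $E$ --- preserving that $(X,E)$, hence $(X,D+E)$ away from the centers, is semi-snc --- is routine.
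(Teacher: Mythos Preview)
Your plan diverges from the paper's proof in a structural way, and the divergence is exactly where your acknowledged gap lies. The paper does \emph{not} run a Hilbert--Samuel reduction on $\Supp D$ for $p=2$; there is no analogue of Step~1 of Proposition~\ref{prop:pgreaterthan3} here. Instead the argument works directly with the obstruction ideal $J=J(X,D)$ and never appeals to the normal form coming from $H_{\Supp D}=H_{2,q}$.

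Concretely, the paper's proof has three steps, iterated on a different invariant. First, principalize $J$ by applying the ordinary desingularization algorithm to $V(J)$ \emph{as a hypersurface in $X_1\cap X_2$} (relative to $D_1|_{X_1\cap X_2}+E|_{X_1\cap X_2}$), not to $\Supp D$; Lemma~\ref{claimcasepequal21} shows that after this $J(X',D')$ is already of the monomial form $(x_1,x_2,u^\alpha)$ in exceptional generators, so the remaining cleaning (Lemma~\ref{removingJpequals2}(3)) is mechanical. Second, once $J=\cO_Y$, Lemma~\ref{lem:computeJ2} gives $f=x_1g_0+x_2g_1+y_1\cdots y_s$; a point is still non-semi-snc exactly when the number $\ell$ of irreducible factors of $f|_{x_2=0}$ is strictly less than the number of components of $D_1$ through it, and one blows up $C_r:=\Sigma_{1,r}(X_1,D_1)\cap X_2$ (only those components carrying non-semi-snc points) to decrease $r$. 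Third, this blowing-up may reintroduce a monomial $y_i^{j}$ in $J$, which is cleaned as in step one without raising $r$. The outer loop terminates because $r(X,D)$, the maximum number of components of $D_1$ through a non-semi-snc point of $X_1\cap X_2$, strictly decreases.

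Thus your ``main obstacle''---establishing that $g_2$ is a monomial in exceptional divisors after a Hilbert--Samuel reduction on $\Supp D$---is sidestepped entirely: the monomial form comes for free from desingularizing $V(J)$ itself (this is the content of Lemma~\ref{claimcasepequal21}), and the Hilbert--Samuel function of $\Supp D$ plays no role in this section. Your stratified bookkeeping with monotone sets $I_k$ is likewise replaced by the single integer $r(X,D)$. Your route might be salvageable, but you would have to supply exactly the lemma you flag as missing; the paper avoids it by choosing a different object to resolve.
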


The proof will involve several lemmas. First we show how to blow-up to make $J_a=\mathcal{O}_{X,a}$ at every point $a$. We will use the assumptions of Proposition \ref{propositioncasepequal2} throughout the section. Consider $a\in X$. 
Then $X$ is embedded locally at $a$ in a smooth
variety $Y$ with a system of coordinates $x_1,x_2,y_1,\ldots,y_q, z_1,\ldots,z_{n-q-2}$ in a neighborhood $U$ of $a=0$, in which we can write:
\begin{align*}
 X&=X_1 \cup X_2,\\
D&=D_1+D_2,
\end{align*}
where $X_1=(x_1=0)$, $X_2=(x_2=0)$, $D_1=(x_1=y_1\dotsm y_q=0)$ and $D_2=(x_2=f=0)$, for some $f\in\mathcal{O}_{Y,a}$. This notation will be used
in Lemmas \ref{claimcasepequal21}, \ref{removingJpequals2} and in the proof
of Proposition \ref{propositioncasepequal2} below.

Recall the ideal $J = J(X,D)$ (Definition \ref{definitionJ}) that captures an
important obstruction to semi-snc, see Lemma \ref{lemmafactorssnc}; $J$ is the quotient of the ideal of $D_2\cap X_1$ by that of $D_1\cap X_2$ in $\mathcal{O}_{Y}$. 

Consider $V(J)$ as a hypersurface in $X_1\cap X_2$, and the divisor $D_1|_{X_1\cap X_2}+E|_{X_1\cap X_2}$. We will blow up to get $J=\mathcal{O}_{Y}$ using desingularization of 
$(V(J),\allowbreak D_1|_{X_1\cap X_2}+E|_{X_1\cap X_2})$; i.e., using the desingularization algorithm
for the hypersurface $V(J)$ embedded in the smooth variety $X_1\cap X_2$, with exceptional divisor $D_1|_{X_1\cap X_2}+E|_{X_1\cap X_2}$. The resolution algorithm gives a
sequence of blowings-up that makes the strict  transform of $V(J)$ smooth and snc with respect to the exceptional divisor; we include a final blowing-up of the smooth hypersurface $V(J)$ to make the strict transform empty (``principalization'' of the ideal $J$). It is not necessarily true, however, that $J(X,D)'=J(X',D')$. Therefore, after the preceding blowings-up, we do not necessarily have $J(X',D')=\mathcal{O}_{Y'}$. Additional ``cleaning'' blowings-up will be needed.
 
Example \ref{example2} gives a simple illustration of the problem we resolve in this section. In the example, $V(J)=(x_1=x_2=z=0)$, and our plan is to blow-up with the latter as center $C$ to resolve $J$. In the example, this blowing-up is enough to make $(X,D)$ semi-snc.

\begin{lemma}\label{lem:computeJ2}
Let $R$ denote a regular local ring, and suppose that $x_1,x_2,\allowbreak y_1,\ldots, y_q$ are 
part of regular system of parameters of $R$. Let $f\in R$. Then there exists a maximum
subset $\{i_1<\ldots<i_{t}\}$ of $\{1,\ldots,q\}$ (with respect to inclusion), such that $f$
can be written in the form $f=x_1g_1+x_2g_2+y_{i_1}\dotsm y_{i_t}g_3$. Moreover, 
\[
 [(x_1,x_2,f):(y_1\dotsm y_q)]=(x_1,x_2,g_3).
\]
 \end{lemma}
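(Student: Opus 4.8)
\emph{Overview and reduction.} The plan is to strip off $x_1,x_2$ and work in a power series ring in the remaining variables, where the statement reduces to an elementary fact about divisibility by squarefree monomials. First I would pass to the completion $\widehat R$ of $R$ at its maximal ideal: since $R\to\widehat R$ is faithfully flat, $I\widehat R\cap R=I$ for every ideal $I$ of $R$, and forming a quotient ideal by a finitely generated ideal commutes with flat base change; hence the existence of the required expression for $f$ and the asserted identity of ideals hold over $R$ once they hold over $\widehat R$. By the Cohen structure theorem we may therefore assume $R=\IK\llbracket x_1,x_2,y_1,\ldots,y_q,z_1,\ldots,z_{d}\rrbracket$ (with $d=\dim R-q-2$), the given partial regular system of parameters being part of the variables. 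Put $B:=R/(x_1,x_2)\cong\IK\llbracket y_1,\ldots,y_q,z_1,\ldots,z_{d}\rrbracket$; this is a unique factorization domain in which $y_1,\ldots,y_q$ are pairwise non-associate primes. For $h\in R$ write $\bar h$ for its image in $B$. Since $a\in[(x_1,x_2,f):(y_1\dotsm y_q)]$ iff $\bar a\cdot y_1\dotsm y_q\in\bar fB$, and $a\in(x_1,x_2,g_3)$ iff $\bar a\in\bar g_3B$, both assertions of the lemma become statements about the principal ideal $\bar fB$ of $B$.

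\emph{The maximal subset.} An expression $f=x_1g_1+x_2g_2+(y_{i_1}\dotsm y_{i_t})g_3$ with $g_\bullet\in R$ exists exactly when $\bar f\in(y_{i_1}\dotsm y_{i_t})B$, i.e.\ exactly when each $y_{i_k}$ divides $\bar f$ in $B$ --- the nontrivial direction holding because distinct $y_{i_k}$ are pairwise coprime in the UFD $B$, so $\bigcap_k(y_{i_k})=(y_{i_1}\dotsm y_{i_t})$. Consequently $S:=\{\,j:\ y_j\mid\bar f\ \text{in}\ B\,\}$ is the unique largest such subset. Writing $S=\{i_1<\dotsm<i_t\}$ and fixing an expression as above, the factor $\bar g_3=\bar f/(y_{i_1}\dotsm y_{i_t})$ is then determined by $S$, so the ideal $(x_1,x_2,g_3)$ of $R$ is well defined.

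\emph{Computing the quotient ideal.} Set $y_S:=y_{i_1}\dotsm y_{i_t}$ and $y_{S^c}:=\prod_{j\notin S}y_j$, so $y_1\dotsm y_q=y_Sy_{S^c}$. The inclusion $(x_1,x_2,g_3)\subseteq[(x_1,x_2,f):(y_1\dotsm y_q)]$ is a one-line check in $R$: $x_i\cdot y_1\dotsm y_q\in(x_1,x_2)$, and $g_3\cdot y_1\dotsm y_q=y_{S^c}\,(f-x_1g_1-x_2g_2)\in(x_1,x_2,f)$. For the reverse inclusion I work in $B$: if $\bar a\cdot y_Sy_{S^c}\in\bar fB=y_S\bar g_3B$, then cancelling the nonzero monomial $y_S$ in the domain $B$ gives $\bar g_3\mid\bar a\,y_{S^c}$. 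Here $\bar g_3$ is coprime to $y_{S^c}$: were $y_j\mid\bar g_3$ for some $j\notin S$, then $y_jy_S\mid y_S\bar g_3=\bar f$, contradicting the maximality of $S$; since $y_{S^c}$ is a product of such $y_j$, none dividing $\bar g_3$, it follows that $\gcd(\bar g_3,y_{S^c})=1$, whence $\bar g_3\mid\bar a$, i.e.\ $\bar a\in\bar g_3B$. Thus $[\bar fB:(y_1\dotsm y_q)B]=\bar g_3B$, which by the reduction step is exactly the identity $[(x_1,x_2,f):(y_1\dotsm y_q)]=(x_1,x_2,g_3)$.

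\emph{The main point of care.} There is no serious difficulty here; the one thing demanding attention is the faithfully flat descent in the reduction step --- checking that ``$f$ admits such an expression'', ``$a$ lies in the quotient ideal'' and ``$c$ divides $d$'' are all unchanged on passing to $\widehat R$ and then to $B=R/(x_1,x_2)$. Once inside $B=\IK\llbracket y,z\rrbracket$ the content is merely that the largest squarefree monomial in the $y_j$ dividing $\bar f$ is $\prod_{\,y_j\mid\bar f\,}y_j$, together with cancellation and coprimality in a UFD. One could instead avoid completing and argue directly in $B=R/(x_1,x_2)$, invoking the Auslander--Buchsbaum theorem that a regular local ring is a UFD; I would keep the completion version, since the power-series picture is the one used elsewhere in the paper.
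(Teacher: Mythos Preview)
Your proof is correct. The route differs from the paper's in presentation, though the underlying ideas are close. The paper stays in $R$ throughout: for the maximum subset it takes a maximal-by-inclusion $\{i_1,\ldots,i_t\}$ and shows that any further $y_j$ with $f\in(x_1,x_2,y_j)$ can be absorbed, using primality of $(x_1,x_2,y_j)$; for the quotient ideal it performs an explicit inductive cancellation, peeling off one $y_{j_k}$ at a time by repeatedly using that $(x_1,x_2,y_{j_k})$ is prime and $g_3\notin(x_1,x_2,y_{j_k})$. You instead pass to $B=R/(x_1,x_2)$ and recast everything as divisibility in a UFD: the maximal set is simply $\{j:y_j\mid\bar f\}$, and the quotient identity becomes the one-line coprimality argument $\gcd(\bar g_3,y_{S^c})=1\Rightarrow(\bar g_3\mid\bar a\,y_{S^c}\Rightarrow\bar g_3\mid\bar a)$. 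Your packaging is cleaner and makes the structure transparent; the paper's version avoids invoking UFD results and the completion step. As you yourself note, the completion is dispensable here since $R/(x_1,x_2)$ is already regular local and hence a UFD by Auslander--Buchsbaum, so the faithfully-flat descent paragraph could be dropped entirely.
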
 
 
\begin{proof}
Let $f=x_1g_1+x_2g_2+y_{i_1}\dotsm y_{i_t}g_3$ with $\{i_1,\ldots, i_t\}$ maximal by inclusion, among the subsets of $\{1,\ldots,q\}$. Assume that$f=x_1h_1+x_2h_2+y_{j}h_3$ with $j\notin\{i_1,\ldots,i_t\}$. Then $y_{i_1}\dotsm y_{i_t}g_3\in(x_1,x_3,y_j)$. Since $(x_1,x_2,y_j)$ is prime and $j\notin\{i_1,\ldots,i_t\}$,  we have $g_3\in(x_1,x_2,y_j)$. It follows that there are 
$g'_1,g'_2,g'_3$ such that $f=x_1g'_1+x_2g'_2+y_jy_{i_1}\dotsm y_{i_t}g'_3$, contradicting the maximality of $\{i_1,\ldots,i_t\}$. Therefore $\{i_1,\ldots,i_t\}$ is actually maximum.

For the second part of the lemma: Clearly, $[(x_1,x_2,f):(y_1\dotsm y_q)]\supset(x_1,x_2,g_3)$. Assume that $h \in [(x_1,x_2,f):(y_1\dotsm y_q)]$; i.e.,
$y_1\dotsm y_qh\in(x_1,x_2,f)$. It follows that there is $c\in R$ such that $y_1\dotsm y_qh -y_{i_1}\dotsm y_{i_t}g_3c\allowbreak \in(x_1,x_2)$. Since $(x_1,x_2)$ is prime, we must have $y_{j_1}\dotsm y_{j_{q-t}}h-g_3c\in(x_1,x_2)$, where $\{j_1,\ldots, j_{q-t}\}=\{1,\ldots, q\}\setminus\{i_1,\ldots,i_t\}$. Then $g_3c\in(x_1,x_2,y_{j_k})$, for every $k=1,\ldots,q-t$.

We claim that $g_3\notin(x_1,x_2,y_{j_k})$. In fact, if $g_3\in(x_1,x_2,y_{j_k})$, then there is $\widetilde{g_3}$ such that $f-y_{j_k}y_{i_1}\dotsm y_{i_t}\widetilde{g_3}\in(x_1,x_2)$, contradicting the maximality of $\{i_1,\ldots, i_t\}$. 

Therefore, $c\in(x_1,x_2,y_{j_k})$. So there is $\tilde{c}$ such that $hy_{j_1}\dotsm \widehat{y_{j_k}}\dotsm y_{j_{q-t}}\allowbreak-g_3\tilde{c}\in\allowbreak(x_1,x_2)$, where $\widehat{y_{j_k}}$ means that the term is omitted. By iterating this argument for $k\in\{j_1,\ldots, j_{q-t}\}$ we get $h-g_3\tilde{c}\in(x_1,x_2)$, for some $\tilde{c}$. This implies that $h\in(x_1,x_2,g_3)$, proving that $[(x_1,x_2,f):\allowbreak(y_1\dotsm y_q)]\subset(x_1,x_2,g_3)$.
\end{proof}
 
Given a smooth variety $W$ and a blowing-up $\s: W' \to W$ with smooth center
$C \subset W$, we denote by $I'$ the strict transform by $\s$ of an ideal $I \subset \cO_W$, and by $Z'$ the strict transform of a subvariety $Z \subset W$. (We sometimes use the same notation for the strict transform by a sequence of blowings-up.) We also denote by $f'$ the ``strict transform'' of a function $f \in \cO_{W,a}$, where $a\in W$.
The latter is defined up to an invertible factor at a point $a' \in 
\s^{-1}(a)$; $f':=u^{-d}\cdot f\circ\s$ , where $(u=0)$ defines $\s^{-1}(C)$ at $a'$ and $d$ is the maximum such that $f\circ\s\in(u^d)$ at $a'$. 

\begin{lemma}\label{claimcasepequal21}
Let $\sigma:Y'\rightarrow Y$ denote a blowing-up (or a sequence of blowings-up) 
which is (or are) admissible for $(V(J), D_1|_{X_1\cap X_2}+E|_{X_1\cap X_2})$; i.e., with center(s) in $\Supp \mathcal{O}/J$ and snc with respect to $D_1|_{X_1\cap X_2}+E|_{X_1\cap X_2}$. (For simplicity, we maintain the same notation for the transforms).
Then
\begin{equation}\label{eq:inclusion}
J(X',D')\subset J(X,D)'.
\end{equation}
Moreover, if $J(X,D)'=\mathcal{O}_{Y'}$ and $a'\in X_1\cap X_2$, then $J(X',D')_{a'}=(x_1,x_2,u^{\alpha})$ (in coordinates as above), where $u^{\alpha}$ is a monomial in generators of the ideals of the components of the exceptional divisor
of $\sigma$.
 \end{lemma}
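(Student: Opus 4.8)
The plan is to reduce to a single blowing-up and analyze it in explicit coordinates, keeping $J$ in the normal form supplied by Lemma~\ref{lem:computeJ2} throughout. Write $\sigma=\sigma_s\circ\cdots\circ\sigma_1$; since this is the desingularization-and-principalization sequence of $(V(J),\,D_1|_{X_1\cap X_2}+E|_{X_1\cap X_2})$, each $\sigma_k$ is admissible for the transform of that data, so I would induct on $s$, the inductive step being one admissible blowing-up $\tau\colon Y'\to Y$ whose centre $C$ lies in the strict transform of $V(J)$ and is snc with the transform of $D_1|_{X_1\cap X_2}+E|_{X_1\cap X_2}$ and the exceptional divisors. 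Two things must be checked at the step: that $\tau$ preserves the hypotheses of Proposition~\ref{propositioncasepequal2} (routine --- e.g. $C\subseteq X_1\cap X_2$ is automatically snc with $X=(x_1x_2=0)$, so $X'$ stays snc with at most two components through each point), and that $J(X',D')\subseteq J(X,D)'$, which is the heart of the matter.

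By Lemma~\ref{lem:computeJ2}, in good coordinates $x_1,x_2,y_1,\dots,y_q,z_\bullet$ at a point $a$ with $X_i=(x_i=0)$, $D_1=(x_1=y_1\cdots y_q=0)$, $D_2=(x_2=f=0)$, one has $J(X,D)_a=(x_1,x_2,g_3)$, where $f=x_1g_1+x_2g_2+y_{i_1}\cdots y_{i_t}g_3$ with $\{i_1<\cdots<i_t\}$ the maximum subset of $\{1,\dots,q\}$ admitting such a decomposition. The hypothesis $C\subseteq V(J)=(x_1=x_2=g_3=0)$ gives $g_3\in I_C$, and $C$ snc with $D_1|_{X_1\cap X_2}$ lets us choose coordinates making $C$, $X_1$, $X_2$ and the hyperplanes $(y_j=0)$ simultaneously monomial, so $C=(x_1=x_2=v_1=\cdots=v_c=0)$ with each $v_k$ one of the $y_j$ or $z_j$. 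If $C=X_1\cap X_2$ then $\tau$ separates $X_1$ and $X_2$, there is no point of $X'_1\cap X'_2$, and there is nothing to prove; so assume $C\subsetneq X_1\cap X_2$ and let $a'\in X'_1\cap X'_2$. Then $a'$ lies in a chart whose pivot coordinate $w$ is one of the $v_k$ with $w\neq x_1,x_2$, and the exceptional divisor of $\tau$ is $(w=0)$ in the new coordinates. I would compute $(y_1\cdots y_q)'=\prod_{y_j\neq w}y'_j$ and, using $g_3\in I_C$ so that $\tau^*g_3=w^Bg'_3$ with $B\ge 1$,
\[
\tau^*f = w\,x'_1\tau^*g_1 + w^{A+B}\Big(\textstyle\prod_{y_{i_k}\neq w}y'_{i_k}\Big)g'_3,
\]
where $A$ counts the $y_{i_k}$ that are blown up (including the pivot, if it is among them). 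Hence $f'=x'_1\tau^*g_1+w^{A+B-1}\big(\prod_{y_{i_k}\neq w}y'_{i_k}\big)g'_3$ in the generic case (the others, when $\tau^*g_1$ also carries an exceptional factor, are handled the same way), and applying Lemma~\ref{lem:computeJ2} to the transform $(X',D')$ gives $J(X',D')_{a'}=(x'_1,x'_2,g''_3)$ with $g''_3$ a divisor of $w^{A+B-1}g'_3$. Since $A+B-1\ge 0$ this forces $g''_3\in(g'_3)$, hence $J(X',D')_{a'}\subseteq(x'_1,x'_2,g'_3)\subseteq J(X,D)'$, which (combined with the inductive hypothesis and the fact that inclusions of ideals are preserved by strict transform) proves the inclusion.

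For the ``moreover'' assertion, suppose $J(X,D)'=\mathcal{O}_{Y'}$, i.e. the strict transform of the ideal $(x_1,x_2,g_3)$ is the unit ideal (the principalization of $J$ has been carried out). Running the computation above through the whole sequence, the transform of $g_3$ picks up a power of the exceptional divisors at each stage; the principalization hypothesis forces the remaining non-exceptional factor of $g_3$ to be a unit modulo $(x_1,x_2)$ at $a'$, and in particular to carry no $y'_j$ factor. Feeding this back into Lemma~\ref{lem:computeJ2} for $(X',D')$ then yields $J(X',D')_{a'}=(x_1,x_2,u^\alpha)$, with $u^\alpha$ the accumulated monomial in the exceptional divisors.

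The delicate point is the bookkeeping in the single-blowing-up step: one must check that forming the strict transform $f'$ divides out only the minimum of the exceptional orders of the term $x_1g_1$ and of the $y$-part of $f$, so that the surplus power $w^{A+B-1}$ remains attached to the new $g_3$. It is precisely this surplus that both gives $J(X',D')\subseteq J(X,D)'$ and produces the monomial $u^\alpha$ in the second assertion; in particular one cannot simply pass to $X_1\cap X_2$ (i.e. work modulo $(x_1,x_2)$), since that discards the term $x_1g_1$, forces all exceptional factors of $f$ to be divided out, and destroys the surplus.
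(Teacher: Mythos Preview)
Your computational approach is viable, but it diverges substantially from the paper's argument for the inclusion \eqref{eq:inclusion}, and it contains a step that is not justified as written.

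\medskip

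\textbf{Comparison with the paper.} For the inclusion $J(X',D')\subset J(X,D)'$, the paper gives a short ideal-theoretic argument that uses neither Lemma~\ref{lem:computeJ2}, nor explicit charts, nor induction on the length of the blow-up sequence. One simply writes
\[
J(X,D)=[(x_1,x_2,f):(y_1\cdots y_q)]=\tfrac{1}{y_1\cdots y_q}\bigl((x_1,x_2,f)\cap(y_1\cdots y_q)\bigr),
\]
observes that, because the centres are snc with the hyperplanes $(y_j=0)$, strict transform commutes with intersection and division by the principal ideal $(y_1\cdots y_q)$, and concludes
\[
J(X,D)'=[(x_1,x_2,f)':(y_1'\cdots y_q')]\supset[(x_1)'+(x_2,f)':(y_1'\cdots y_q')]=J(X',D'),
\]
using only the general fact $(I+K)'\supset I'+K'$. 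This handles the whole sequence $\sigma$ at once. Lemma~\ref{lem:computeJ2} is invoked only for the second assertion, where the argument is essentially the same as yours.

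\medskip

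\textbf{The gap in your argument.} The implication ``$g_3''$ is a divisor of $w^{A+B-1}g_3'$, hence $g_3''\in(g_3')$'' is not valid without further work. What you actually obtain from Lemma~\ref{lem:computeJ2} applied to $f'$ is that, modulo $(x_1',x_2')$,
\[
\Bigl(\textstyle\prod_{j\in S'\setminus\{i_k'\}}y_j'\Bigr)\,g_3''\;=\;w^{\,A+B-1}g_3',
\]
where $S'$ is the new maximal subset. If $S'$ strictly contains $\{i_k'\}$ you would get $g_3''=w^{A+B-1}(g_3'/\text{extra})$, which need \emph{not} lie in $(x_1',x_2',g_3')$. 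To close the argument you must show that $S'$ picks up no new indices, i.e.\ that for $j\notin\{i_1,\dots,i_t\}$ the condition $g_3\notin(x_1,x_2,y_j)$ persists to $g_3'\notin(x_1',x_2',y_j')$. This does hold (restrict to the smooth subvariety $(x_1'=x_2'=y_j'=0)$, which dominates $(x_1=x_2=y_j=0)$, and use that $w$ is a non-zero-divisor there), but it is a genuine verification, not a consequence of ``$A+B-1\ge 0$''. Once this is supplied, your approach goes through; without it the inclusion is unproved.
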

 
\begin{remark} By \eqref{eq:inclusion}, if $J(X,D)'\allowbreak\neq\allowbreak\mathcal{O}_{Y'}$, then $J(X',D')\neq\mathcal{O}_{Y'}$. Therefore, by Lemma \ref{lemmafactorssnc}, we never blow-up semi-snc points of the transforms of $(X,D)$ while desingularizing $J(X,D)$ .
\end{remark} 

 \begin{proof}
Let $I_{X_1}$, $I_{X_2}$, $I_{D_1}$ and $I_{D_2}$ denote the ideals in $\cO_Y$ of $X_1$, $X_2$, $D_1$ and $D_2$ respectively. Locally at $a\in X_1\cap X_2$, we have $I_{X_1}=(x_1)$, $I_{X_2}=(x_2)$, $I_{D_1}=(x_1,y_1\dotsm y_q)$ and $I_{D_2}=(x_2,f)$. Then
\begin{align}\label{eq:J}
J(X,D)&=[I_{X_1}+I_{D_2} : I_{X_2}+I_{D_1}]\\
  &=[(x_1,x_2,f):(x_1,x_2,y_1\dotsm y_q)]\notag\\
  &=[(x_1,x_2,f):(y_1\dotsm y_{q})],\notag
\end{align}
where the last equality follows from the definition of quotient of ideals and the fact that $x_1,x_2\in(x_1,x_2,f)$. 

At a point $a'\in\sigma^{-1}(a)$ with $a'\in X_1'\cap X_2'$,
\begin{align*}
 J(X',D')&=[I_{X_1}'+I_{D_2}' : I_{X_2}'+I_{D_1}']\\
         &=[(x_1')+(x_2,f)' : (x_1',x_2',y_1'\dotsm y_q')]\\
         &=[(x_1')+(x_2,f)' : (y_1'\dotsm y_q')].
\end{align*}

In general, $(I+K)'\supset I'+K'$ and, if $I\supset K$, then $[I:L]\supset[K:L]$, where
$I,K,L$ are ideals. Then
\begin{align*}
 J(X,D)'&=\left(\frac{1}{y_1\dotsm y_q}\left((x_1,x_2,f)\cap(y_1\dotsm y_q)\right)\right)'\\
        &=\frac{1}{y_1'\dotsm y_q'}\left((x_1,x_2,f)'\cap(y_1'\dotsm y_q')\right)\\
        &=[(x_1,x_2,f)':(y_1'\dotsm y_q')]\\
        &=[\left((x_1)+(x_2,f)\right)':(y_1'\dotsm y_q')]\\
        &\supset[(x_1)'+(x_2,f)' : (y_1'\dotsm y_q')]\\
        &=J(X',D').
\end{align*}

Now assume that $J(X,D)'=\mathcal{O}_{Y'}$. Write $f=x_1g_1+x_2g_2+y_{i_1}\dotsm y_{i_t}g_3$ as in Lemma \ref{lem:computeJ2}. The center of the blowing-up lies in 
$\Supp \mathcal{O}_Y/J \subset X_1\cap X_2$ and has normal crossings with
respect to $D_1|_{X_1\cap X_2}+E|_{X_1\cap X_2}$. It follows that
$I_{X_1}'+I_{D_2}'=(x_1',x_2',u^\alpha y_{i_1}'\dotsm y_{i_t}'g_3')$ for $u^\alpha=u_1^{\alpha_1}\dotsm u_t^{\alpha_t}$ a monomial in generators of the ideals of the components of the exceptional divisor. We can then compute
\begin{align}\label{eq:J'}
 J(X',D')&=[I_{X_1}'+I_{D_2}' : (y_1'\dotsm y_q')]\\
         &=[(x_1',x_2',u^\alpha y_{i_1}'\dotsm y_{i_t}' g_3'):(y_1'\dotsm y_q')].\notag
\end{align}
But $J(X,D) = (x_1,x_2,g_3)$, from \eqref{eq:J}.
Since $J(X,D)'= \mathcal{O}_{Y',a'}$ and $a'\in X_1\cap X_2$, it follows that $g_3'$ is a unit. The second assertion of the lemma follows by applying Lemma \ref{lem:computeJ2} to \eqref{eq:J'}.
 \end{proof}

 \begin{lemma}\label{removingJpequals2}
Consider the transform $(X',D',\tE)$ of $(X,D,E)$ by the desingularization of $(V(J),D_1|_{X_1\cap X_2}+E|_{X_1\cap X_2})$. Then:
\begin{enumerate}
\item For every $q$, $\Sigma_{2,q}(X',D')$ lies in the inverse image of $\Sigma_{2,q}(X,D)$.
\item
Let $a'\in X'$. Then the ideal $J(X',D')_{a'}$ is of the form 
$(x_1,x_2,u^\alpha)$, where $X_1'=(x_1=0)$, $X_2'=(x_2=0)$ and $u=u_1^{\alpha_1}\dotsm u_t^{\alpha_t}$ is a monomial in the generators $u_i$ of the ideals of the components of $\tE$. Thus $V(J(X',D'))$ consists of some components of $X_1\cap X_2\cap E$.
\item After a finitely many blowings-up of components of $V(J(X',D'))$ (and its successive transforms), the transform $(X'',D'')$ of $(X,D)$ satisfies $J(X'',D'')=\mathcal{O}_{Y''}$. (For functoriality, the components to be blown up can be chosen according to the order on the components of $E$.
\end{enumerate} 
\end{lemma}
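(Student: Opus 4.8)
The plan is to prove the three assertions in turn, the common engine being the quotient-ideal identities of Lemma~\ref{lem:computeJ2} and Lemma~\ref{claimcasepequal21}, together with a termination argument for the iterated ``cleaning'' in part (3).

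\emph{Part (1).} I would reduce to a single blowing-up $\sigma_0\colon X_0'\to X_0$ whose center $C\subseteq V(J)$ is smooth and snc with respect to $D_1|_{X_1\cap X_2}+E|_{X_1\cap X_2}$, and then compose. In the coordinates $x_1,x_2,y_1,\ldots,y_q,z_\bullet$ on $Y$ fixed before the lemma, $C$ is a coordinate subspace of $(x_1=x_2=0)$ compatible with the hyperplanes $(y_j=0)$. Running through the affine charts of $\sigma_0$: in the $x_1$-chart the strict transform of $X_1$ becomes the exceptional divisor and disappears, and symmetrically in the $x_2$-chart for $X_2$, so these charts contain no point of $X_1'\cap X_2'$; in each remaining chart $X_1'$ and $X_2'$ are again $(x_1=0)$ and $(x_2=0)$, the strict transform of $D_1$ is again $(x_1=y_1\cdots y_q=0)$, and, since $C\subseteq V(J)\subseteq\Supp D_2$, the strict transform of $f$ divides out precisely the exceptional factor identified in the proof of Lemma~\ref{claimcasepequal21}, so the number of branches of $D_2'$ at a point $a'\in X_1'\cap X_2'$ equals the number of branches of $D_2$ at $\sigma_0(a')$. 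Hence both $p=2$ and $q$ are preserved under $\sigma_0$, and (1) follows by composition.

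\emph{Parts (2) and (3).} By construction the desingularization of $(V(J),D_1|_{X_1\cap X_2}+E|_{X_1\cap X_2})$ terminates with the strict transform of $V(J)$ empty, i.e.\ $J(X,D)'=\cO_{Y'}$ for the whole composite; so the second assertion of Lemma~\ref{claimcasepequal21} applies to the composite and gives, at every $a'\in X_1'\cap X_2'$, that $J(X',D')_{a'}=(x_1,x_2,u^\alpha)$ with $u^\alpha=u_1^{\alpha_1}\cdots u_t^{\alpha_t}$ a monomial in generators of the components of $\tE$; away from $X_1'\cap X_2'$ there is nothing to check, since by Lemma~\ref{claimcasepequal21} and Proposition~\ref{lemmafactorssnc} the ideal $J$ stays proper only over non-semi-snc points, all of which lie in the last component. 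Consequently $V(J(X',D'))=\bigcup_i(x_1=x_2=u_i=0)=\bigcup_i X_1'\cap X_2'\cap(u_i=0)$ is a union of components of $X_1'\cap X_2'\cap\tE$, which is (2). For (3): each such component, being $(u_i=0)$ inside $X_1'\cap X_2'$, is smooth and snc with respect to $\tE|_{X_1'\cap X_2'}$, so blowing it up is again admissible for $(V(J(X',D')),D_1|_{X_1'\cap X_2'}+\tE|_{X_1'\cap X_2'})$, and Lemma~\ref{claimcasepequal21} shows the transform of $J$ is once more of the form $(x_1,x_2,u^\beta)$, now with $u^\beta$ obtained from $u^\alpha$ by cancelling one factor of the blown-up divisor (and with $J$ the unit ideal in all other charts). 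Attaching to $u^\alpha$ a monomial complexity (for instance the multiset of exponents, well-ordered via a chosen ordering of the components of $\tE$) that strictly drops at each such blowing-up, one concludes after finitely many steps that $u^\alpha=1$, i.e.\ $J(X'',D'')=\cO_{Y''}$; choosing the component to blow up according to that ordering makes the procedure functorial.

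The step I expect to be the main obstacle is the bookkeeping in part (3): pinning down the precise form of the transformed $J$ after one cleaning blowing-up, so that the monomial complexity really decreases and the iteration terminates. Such a blowing-up simultaneously transforms the exceptional monomial $u^\alpha$ sitting inside $g_3$, creates a new exceptional divisor, and acts on the strict transforms of $X_1$, $X_2$, $D_1$, $D_2$; as in the proof of Lemma~\ref{claimcasepequal21} the computation ultimately reduces to quotient-ideal manipulations of the type in Lemma~\ref{lem:computeJ2}, but keeping track of all the transforms at once is the delicate point. Part (1)'s chart analysis is routine by comparison, and part (2) is essentially a corollary of Lemma~\ref{claimcasepequal21}.
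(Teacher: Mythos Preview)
Your overall plan coincides with the paper's: (1) is dispatched quickly, (2) is read off from the second assertion of Lemma~\ref{claimcasepequal21}, and (3) is a cleaning procedure with a decreasing monomial invariant. Two points of comparison are worth making.

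For (3), your appeal to Lemma~\ref{claimcasepequal21} after a single cleaning blow-up is not quite legitimate. The second assertion of that lemma applies only when the \emph{strict transform} $J(X,D)'$ is the unit ideal; but if $J(X',D')=(x_1,x_2,u^{\alpha})$ and you blow up $(x_1=x_2=u_1=0)$, then in the $u_1$-chart the strict transform of this ideal is $(x_1,x_2,u_1^{\alpha_1-1}u_2^{\alpha_2}\cdots u_t^{\alpha_t})$, which is not $\cO_{Y''}$ unless $u^\alpha=u_1$. The paper does not try to re-enter Lemma~\ref{claimcasepequal21}; instead it carries out directly the computation you flag as the ``main obstacle,'' and the computation is shorter than you fear. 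Using Lemma~\ref{lem:computeJ2}, write $f=x_1g_0+x_2g_1+y\,u^\alpha$ with $y=y_{i_1}\cdots y_{i_t}$. After blowing up $(x_1=x_2=u_1=0)$, in the $u_1$-chart one has $f'=x_1'g_0'u_1^{j_0}+x_2'g_1'u_1^{j_1}+y\,u_1^{\beta_1}u_2^{\alpha_2}\cdots u_t^{\alpha_t}$ with $\beta_1<\alpha_1$, and a second application of Lemma~\ref{lem:computeJ2} gives $J(X'',D'')=(x_1',x_2',u_1^{\beta_1}u_2^{\alpha_2}\cdots u_t^{\alpha_t})$. In the $x_1$- and $x_2$-charts, $X_1$ and $X_2$ separate and there is nothing to check. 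So the exponent $\alpha_1$ strictly drops and $|\alpha|$ serves as your terminating invariant; no elaborate bookkeeping is needed.

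For (1), the paper simply calls it ``clear (and independent of the hypothesis).'' Your chart argument is in the right spirit, but the claim that ``the strict transform of $D_1$ is again $(x_1=y_1\cdots y_q=0)$'' in every non-$x_i$ chart is not correct when the center happens to lie in some $(y_j=0)$: in the $y_j$-chart the component $(x_1=y_j=0)$ is lost. What actually makes (1) go through is that the number of components of each $D_i$ can only \emph{decrease} under blowing up, together with the observation that whenever a component of $D_1$ is stripped off, the corresponding exceptional factor reduces the branch count of $D_2$ by at least as much; so the minimum $q$ is preserved at points of $X_1'\cap X_2'$. This is a small repair, not a change of strategy.
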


 \begin{proof}
(1) is clear (and is independent of the hypothesis). (2) follows from the second assertion
of Lemma \ref{claimcasepequal21}.

For (3), let us suppose (to simplify notation) that $J(X,D)$ already satisfies the conclusion of (2). Consider the intersection of $X_1$, $X_2$ and the component $H_1$ of the exceptional divisor defined by $(u_1=0)$. We blow-up the irreducible components of this intersection lying inside $\Supp\mathcal{O}/J$. Locally, $X_1 \cap X_2 \cap H_1$
is defined by $(x_1=x_2=u_1=0)$. In the $u_1$-chart, $D_2'=(x_2'=f'=0)$. Since $(x_1,x_2,u^\alpha)=J(X,D)=[(x_1,x_2,f):(y_1\dotsm y_q)]$, we can write $f=x_1g_0+x_2g_1+yu^\alpha$ with $y=y_{i_1}\dotsm y_{i_t}$ as in Lemma \ref{lem:computeJ2}. Therefore, after the blowing-up, $J(X',D')=(x_1',x_2',u_1^{
\beta_1}u_2^{\alpha_1}\dotsm u_t^{\alpha_t})$ with $\beta_1<\alpha_1$ in the $u_1$-chart. In the $x_1$ and $x_2$-charts,
$X_1$ and $X_2$ are moved apart; i.e. we have only strata $\Sigma_{1,k}$ 
(for certain $k$). After a finite number of such blowings-up, we get $J(X',D')=\mathcal{O}_{Y'}$ as wanted.
 \end{proof}

 \begin{proof}[Proof of Proposition \ref{propositioncasepequal2}]
The proof has three steps:
\begin{enumerate}
\item We use Lemma \ref{removingJpequals2} to reduce to the case $J=\mathcal{O}_Y$.
\item Let $r=r(X,D)$ denote the maximum number of components of $D_1$ passing 
through a non-semi-snc point in $X_1\cap X_2$. We make a single blowing-up to reduce $r$. The result will be that $J$ becomes a monomial ideal, as in Lemma \ref{removingJpequals2}(2).
\item We proceed as in Lemma \ref{removingJpequals2}(3) to reduce again to $J=\mathcal{O}_Y$ (without increasing $r$).
\end{enumerate}
Steps (2) and (3) are repeated until the set of non-semi-snc points in $X_1\cap X_2$ is empty. This occurs after finitely many iterations, since $r$ can not decrease indefinitely.\medskip

(1) We begin by applying Lemma \ref{removingJpequals2} to make $J=\mathcal{O}_Y$. 
\smallskip

(2) Assume that $J=\mathcal{O}_Y$. Let $a\in X$. We use a local embedding of $X$ to write
\begin{align*}
 X&=X_1\cup X_2\\
 D&=D_1+D_2,
\end{align*}
where $X_1=(x_1=0)$, $X_2=(x_2=0)$, $D_1=(x_1=y_1\dotsm y_q=0)$, $D_2=(x_2=f=0)$ for some $f\in\mathcal{O}_Y$ (in the notation at the beginning
of the section). By Lemma \ref{lem:computeJ2}, since $J=\mathcal{O}_Y$, we have $f=x_1g_0+x_2g_1+y_1\dotsm y_s$, for some $s\leq q$. Write $f|_{(x_2=0)}=f_1\dotsm f_\ell$, where each $f_i$ is irreducible. We must have $\ell\leq\ord_a(f)\leq s\leq q$; 
therefore $a\in\Sigma_{2,\ell}$. By Lemma \ref{lem:HSlemma}, $H_{\Supp{D},a}=H_{p,\ell}$ if and only if $\ell=q$. Therefore, by Lemma \ref{lemmafactorssnc}, $(X,D,E)$ is semi-snc at $a$ if and only if $\ell=q$. The idea is to blow-up with center given locally by 
$(x_1=x_2=y_1=\ldots=y_q=0)$.

Define 
\[
 C_r:=\Sigma_{1,r}(X_1,D_1)\cap X_2,
\]
where $r = r(X,D)$. Consider a component $Q$ of $C_{r}$ which includes a non-semi-snc point of $(X,D,E)$ in $X_1\cap X_2$. We will prove that $Q$ is closed and consists only of non-semi-snc points of $(X,D,E)$. We will blow-up the union $C$ of all such components of $C_r$.

The set of semi-snc points is open, so the set of semi-snc points in $Q$  is open in $Q$. At a non-semi-snc point $a$ in $Q$, we have a local embedding and coordinates as above in which we can write 
\begin{align*}
 D_1&=(x_1=y_1\dotsm y_r=0)\\
 D_2&=(x_2=f=0),
\end{align*}
where $f|_{(x_2=0)}$ factors into $\ell<r$ irreducible factors; i.e., $D_2$ has $\ell$ irreducible components passing through $a$. In this neighborhood of $a$ in $Q$, 
all points of $Q$ are non-semi-snc. Thus the set of non-semi-snc points is also open in 
$Q$. Since $Q$ is irreducible, it only contains non-semi-snc points. At a point $b$ of 
$\overline{Q}\setminus Q$, the number of components of $D_1$ can be only 
$ > r$. Since $r$ is maximum over the non-semi-snc points, $b$ is a semi-snc point. 
This contradicts the fact that $Q$ contains only non-semi-snc points. Therefore $Q$ is closed.

Thus $C$ is closed and consists of only non-semi-snc points. We can compute locally 
the effect of blowing up $C$. In the $x_1$- and $x_2$-charts, the preimages of a point
$a\in C$ lie in only one component of $X$. In the $y_i$-chart, we compute 
\begin{align*}
 D_1'&=(x_1=y_1\dotsm \widehat{y_i}\dotsm y_r=0)\\
 D_2'&=(x_2=x_1y_i^{j_1}g_0'+x_2y_i^{j_2}g_1'+y_1\dotsm y_i^{j_3}\dotsm y_s=0),
\end{align*}
where $y_i$ is now a generator of the ideal of a component of the exceptional divisor, 
$\widehat{y_i}$ means that the factor is missing from the product, and at least one of $j_1,j_2,j_3$ equals zero. As a result, $r(X',D')<r(X,D)$.
It may happen that $J(X',D')$ is no longer equal to $\mathcal{O}_{Y'}$, but we can 
calculate that $J(X',D')=(x_1,x_2,y_i^{j_3})$ in the $y_i$-chart.
\smallskip

(3) We apply again Lemma \ref{removingJpequals2}(3). The centers of blowing up
are given locally by $X_1\cap X_2 \cap (y_i=0)$.These blowings-up do not increase $r(X,D)$. 
\smallskip

Therefore, after a finite number of iterations, every point lying in two components of $X$ is semi-snc.
\end{proof} 
  
%%%%%%%%%%%%%%%%%%%%%%%%%%%%%%%%%%%%%%%%%%%%%%%%%%%%%%%%%%%%%%%%%%%%%%%%%%%%%%%%%%%%%%%%%%%%%%%%%%%%%%%%%%%%%%%%%%%%%%%%%%%%%%%%%%%%%%%%%%%%%%%%%%%%%%%%%%%%%%%%%%%%%%%%%%%%%%%%%%%%%%%%%%%%%%%%%%%%%%%%%%%%%%%%%%%%%%%%%%%%%%%%%%%%%%%%%%%%%%%%%%%%%%%%%%%%%%%%%%%%%%%%%%%%%%%%

\section{The non-reduced case}\label{sec:fixingmultiplicities}
The previous sections establish Theorem \ref{thm:main} in the case that $D$ is reduced. In this section we describe the blowings-up necessary to deduce the non reduced case.
In other words, we assume that $(X,D_{\text{red}})$ is semi-snc, and we will
prove Theorem \ref{thm:main} under this assumption.
 
The assumption implies that, for every $a\in X$, there is a local embedding in a smooth variety $Y$ with coordinates $x_1,\ldots,x_p,y_1,\ldots,y_q,z_1\ldots,\allowbreak z_{n-p-q}$ in which $a=0$ and
 \begin{align}\label{eq:red}
 X&=(x_1\dotsm x_p=0),\notag\\
 D&=\sum_{(i,j)}a_{ij}(x_i=y_j=0),
 \end{align}
 for some $a_{ij}\in\mathbb{Q}$. Since the reduced pair is semi-snc, we can assume 
that $a_{ij}\neq0$, for every $(i,j)$ in the index set. Nevertheless, the following 
argument if valid even if we allow the possibility that some $a_{ij} = 0$.

The pair $(X,D)$ is semi-snc at $a$ if and only if $a_{ij}=a_{i'j}$ for all $i,i',j$; see Example \ref{ex:multiplicities}. In this section, we transform $D$ by taking only its strict transform $D'$; see Definition \ref{def:transf}. We can neglect the exceptional divisor because, if 
$\sigma:X'\rightarrow X$ is a blowing-up with smooth center simultaneously normal crossings with respect to $X$ and $\Supp D$, then $(X',D_\text{red}'+\Ex(\sigma))$ is
semi-snc provided that $(X,D_\text{red})$ is semi-snc.
Since all components of $\Ex(f)$ appear with multiplicity one, if we make $(X',D')$ semi-snc then $(X',D'+\Ex(\sigma))$ will be semi-snc as well.

We define an equivalence relation on components of $D$ passing through a point of $X$.

\begin{definition}\label{def:equivcompofD}
Let $a\in X$ and let $D_1$, $D_2$ denote components of $D$ passing through $a$.
We say that $D_1$ and $D_2$ are \emph{equivalent} (\emph{at} $a$) if either $D_1=D_2$ or the irreducible component of $D_1\cap D_2$ containing $a$ has codimension $2$ in $X$. 
\end{definition}

Clearly, if $D_1$ and $D_2$ are equivalent at $a$, then $D_1\cap D_2$ is of the
form $(x_1 = x_2 = y_j = 0)$, for some $j$, at $a$, so the irreducible component
of $D_1\cap D_2$ containing $a$ is smooth and $D_1$, $D_2$ are equivalent
at each of it points.

To check that the preceding relation is transitive, let $D_1=(x_{i_1}=y_{j_1}=0)$, $D_2=(x_{i_2}=y_{j_2}=0)$ and $D_3=(x_{i_3}=y_{j_3}=0)$ in coordinates as before. If $D_1$ is equivalent to $D_2$ (at $a=0$), then $j_1=j_2$. If $D_2$ is equivalent to $D_3$,
then $j_2=j_3$. Therefore $D_3$ is equivalent to $D_1$. Reflexivity and symmetry are clear.

Given $a\in X$, let $p(a)$ denote the number of components of $X$ passing through $a$, and let $q(a)$ denote the number of equivalence classes represented by the set of components of $D$ passing through $a$. In local coordinates as before, $q(a)$ is the total number of $j$ for which there exists $a_{ij}\neq0$. Define $\iota:X\rightarrow\mathbb{N}^2$ by $\iota(a):=(p(a),q(a))$. We give $\mathbb{N}^2$ the partial order in which $(p_1,q_1)\geq(p_2,q_2)$ if and only if $p_1\geq p_2$ and $q_1\geq q_2$. Then 
$\iota$ is upper semi-continuous. Therefore, the maximal locus of $\iota$ is a closed set.

Observe that $(X,D)$ is semi-snc at $a$ if and only if $a_{ij}$ is constant on each equivalence class of the set of components of $D$ passing through $a$. Consider the maximal locus of $\iota$. Each irreducible component of the maximal locus of $\iota$ consists only of semi-snc points or only of non-semi-snc points, because all points in one of these irreducible components are contained in the same irreducible components of 
$D$. We blow up with center the union of those components of the maximal locus of 
$\iota$ that contain only non-semi-snc points. In the preimage of the center, $\iota$ decreases. In fact, in local coordinates at a point, as before, we are simply blowing up 
with center
\[
 C=(x_1=\ldots=x_{p(a)}=y_1=\ldots=y_{q(a)}=0).
\]
Therefore, either one component of $X$ is moved away or all components of $D$ in one equivalence class are moved away.

Let $W$ be the union of those components of the maximal locus consisting of semi-snc points. The previous blowing-up is an isomorphism on $W$. So $(X',D')$ is semi-snc on $W' = W$, and therefore in a neighborhood of $W'$. For this reason, the union of the components of the maximal locus of $\iota$ on $X'\setminus W'$ which contain only
non-semi-snc points, is a closed set in $X'$. Therefore, we can repeat the procedure on $X'\setminus W'$.

Clearly, $\mathbb{N}^2$ has no infinite decreasing sequences with respect to the given
order. After the previous blowing-up, the maximal values of $\iota$ on the set of non-semi-snc points of $(X,D)$ decrease. Therefore, after a finite number of iterations of the
procedure above, the set of non-semi-snc becomes empty. 

\begin{remark}
Suppose that $(X,D_{\text{red}})$ is semi-snc (i.e., all $a_{ij}\neq0$ in \eqref{eq:red}, at every point). Then the blowing-up sequence in this section is given simply by the
desingularization algorithm for 
$\Supp D$, but blowing up only those components of the maximal locus of the invariant on the non-semi-snc points.
\end{remark}
%%%%%%%%%%%%%%%%%%%%%%%%%%%%%%%%%%%%%%%%%%%%%%%%%%%%%%%%%%%%%%%%%%%%%%%%%%%%%%%%%%%%%%%%%%%%%%%%%%%%%%%%%%%%%%%%%%%%%%%%%%%%%%%%%%%%%%%%%%%%%%%%%%%%%%%%%%%%%%%%%%%%%%%%%%%%%%%%%%%%%%%%%%%%%%%%%%%%%%%%%%%%%%%%%%%%%%%%%%%%%%%%%%%%%%%%%%%%%%%%%%%%%%%%%%%%%%%%%%%%%%%%%%%%%%%%
\section{Functoriality}\label{sec:functoriality}
In this final section, we make precise and prove the functoriality assertion of Remark \ref{rem:main}.(3).

We say that a morphism $f:Y\rightarrow X$ preserves the number of irreducible components at every point if, for every $b\in Y$, the number of irreducible components of $Y$ at $b$ equals the number of components of $X$ at $f(b)$.

The Hilbert-Samuel function, and in fact the desingularization invariant (beginning with
the Hilbert-Samuel function), is invariant with respect to \'{e}tale morphisms; see
\cite[Remark 1.5]{BMinv}. A smooth morphism $f:Y\rightarrow X$ factors 
locally 
as an \'{e}tale morphism and a projection from a product with an  affine 
space $\mathbb{A}^n$. Therefore, if $f(b)=a$, then 
$H_{Y,b}=H_{X\times\mathbb{A}^n,(a,0)}$ 
and the remaining terms of the invariant 
are the same at $a$ and $b$. To show that the desingularization sequence of 
Theorem \ref{thm:main} is functorial with respect to \'{e}tale (or smooth) morphisms that preserve the number of irreducible components, we just need to show that each blowing-up involved is defined using only the desingularization invariant and the number of components of $X$ and $D$ passing through a point. We can recapitulate each step of the algorithm in Section \ref{sec:maintheorem}:

Step 1 is an application of Theorem \ref{thm:theoremB}. Functoriality of the blowing-up sequence in the latter is proved in \cite{BMmin} and \cite{BDV}. Step 2 is obtained from
the desingularization algorithm of \cite{BMinv} applied to the components of $D$ lying in the intersection of pairs of components of $X$. The blowing-up sequence involved
is functorial with respect to \'{e}tale (or smooth) morphisms in general. 

Step 3, Case A provides a blowing-up sequence completely determined by the Hilbert-Samuel function and the strata $\Sigma_{p,q}$, for $p\geq 3$. The strata $\Sigma_{p,q}$ are defined in terms of the number of components of $X$ and $D$ passing through a point. Step 3, Case B gives a sequence of blowing-up determined by desingularization of the hypersurface $V(J)$ and the number $r(X,D)$ defined in terms of number of components of $D$; see Proposition \ref{propositioncasepequal2}. Step 3, Case C, is again a use of Theorem \ref{thm:theoremB}. 

Finally, the blowings-up of Step 4 are determined by the number of components of $D$ passing through a point and the equivalence relation on the components of $D$ passing through a point, of Definition \ref{def:equivcompofD}. This equivalence relation is preserved by \'{e}tale (or smooth) morphisms.

\begin{remark}\label{rem:funct}
It is not possible to drop the condition on preservation of the number of components in the functoriality statement, for any desingularization that preserves precisely the class of snc singularities. In fact, 
assume that $X$ is nc but not snc at $a$ (see Example \ref{ex:node}). Then there is an \'{e}tale morphism $f:Y\rightarrow X$ such that $Y$ is snc at $b$ and $f(b)=a$. The desingularization must modify $X$ at $a$. It is impossible to pull back this desingularization to $Y$ and still get a desingularization preserving snc because the
latter must be an isomorphism at $b$.
\end{remark}

%%%%%%%%%%%%%%%%%%%%%%%%%%%%%%%%%%%%%%%%%%%%%%%%%%%%%%%%%%%%%%%%%%%%%%%%%%%%%%%%%%%%%%%%%%%%%%%%%%%%%%%%%%%%%%%%%%%%%%%%%%%%%%%%%%%%%%%%%%%%%%%%%%%%%%%%%%%%%%%%%%%%%%%%%%%%%%%%%%%%%%%%%%%%%%%%%%%%%%%%%%%%%%%%%%%%%%%%%%%%%%%%%%%%%%%%%%%%%%%%%%%%%%%%%%%%%%%%%%%%%%%%%%%%%%%%

%\bibliographystyle{alpha}
%\bibliography{SSNC.4Functorial}

\end{document}